\documentclass[11pt,dvipsnames]{amsart}

\usepackage[english]{babel}
\usepackage[utf8]{inputenc}
\usepackage{amssymb,amsthm,amsfonts,amsmath}
\usepackage{mathrsfs}
\usepackage{verbatim}
\usepackage{mathtools}
\usepackage[shortlabels]{enumitem}
\usepackage[margin=1in]{geometry}

\usepackage{tikz}
\usepackage[backref=page,colorlinks,breaklinks,linkcolor=BrickRed,citecolor=OliveGreen,urlcolor=blue,hypertexnames=true]{hyperref}

\newcommand{\PhiAff}{\mathcal{P}_{\mathrm{biaff}}}
\newcommand{\CAff}{\mathcal{C}_{\mathrm{aff}}}

\newtheorem{theorem}{Theorem}[section]
\newtheorem{corollary}[theorem]{Corollary}
\newtheorem{lemma}[theorem]{Lemma}
\newtheorem{proposition}[theorem]{Proposition}

\theoremstyle{definition}
\newtheorem{definition}[theorem]{Definition}
\newtheorem{remark}[theorem]{Remark}
\newtheorem{example}[theorem]{Example}

\newcommand{\R}{\mathbb{R}}

\begin{document}
	
	\title{Kadison duality for biaffinely convex sets}

	\author[T. \v{S}trekelj]{Tea \v{S}trekelj}
	\address{Institute of Mathematics, University of Innsbruck, Austria.}
	\email{Tea.Strekelj@uibk.ac.at}

	\thanks{The author was supported by the Slovenian Research and Innovation Agency grant J1-60011.}

	\subjclass[2020]{46A55, 52A01, 46A40, 46L08, 46E15}
	
	\keywords{convex set, affine function, Archimedean order unit space, state space, Kadison duality, biconvex set, biaffinely convex set, biaffine polynomial, biaffine order unit space}

	\begin{abstract}
		Classical Kadison duality establishes a categorical equivalence between compact convex sets and Archimedean order unit spaces of affine functions. This paper develops a Kadison-type duality for sets with coordinate-wise convex structure in product domains $\mathbb{R}^n \times \mathbb{R}^m$.  While slice-wise convex (biconvex) sets appear to be the natural geometric domain, separate convexity along coordinates is insufficient to ensure separation by biaffine polynomials.
		In classical convexity, every closed convex set is determined by affine functionals because any exterior point can be separated by a hyperplane. In contrast, biaffine polynomials can only probe a biconvex set $K$ up to its biaffine hull $\operatorname{biconv}(K)$, the envelope formed by the intersection of all regions cut out by valid biaffine inequalities on $K$. 
		To resolve this obstruction, the geometric setting is restricted to biaffinely convex sets, those that coincide with their biaffine hull ($K = \operatorname{biconv}(K)$) and therefore admit strict biaffine separation. On the algebraic side, we introduce the dual objects, which we call  biaffine order unit spaces. These are finite-dimensional Archimedean order unit spaces defined on tensor products $E_1 \otimes E_2$ whose order units factor and whose positive cones are governed entirely by unentangled, rank-1  tensor evaluations. 
		We prove a duality between the category of compact non-degenerate biaffinely convex sets with coordinate-wise affine maps and the category of abstract biaffine order unit spaces with tensor-factorized unital positive linear maps. In particular, every such biaffinely convex set is isomorphic to the coordinate state space of its biaffine polynomials, and conversely, every biaffine order unit space is order-isomorphic to the space of biaffine polynomials on its coordinate state space. 
	\end{abstract}
	
	\maketitle
	
	\newpage
	
	\section{Introduction}
	
	Convexity is a foundational geometric concept that underpins vast areas of mathematics, from optimization and linear programming to functional analysis and probability theory. A subset of a vector space is convex if it contains the line segment connecting any two of its points, or equivalently, if it is closed under convex combinations of its points. In modern analysis, the structural power of convexity lies predominantly in its dual relationship with affine functions defined upon the set. The cornerstone of this perspective is the classical Kadison duality theorem \cite{Al}, which establishes a canonical equivalence between compact convex sets and function systems. The latter are Archimedean order unit spaces, concretely realized as spaces of continuous affine functions. A compact convex set $K$ is completely characterized by the order-theoretic and metric properties of its canonical function system, while conversely, any abstract function system is geometrically recovered through its state space of normalized positive linear functionals. 
	
	Building upon this classical foundation, several generalized convexity notions have been developed to capture richer algebraic and geometric behaviors \cite{MM96, Nai04}. In the noncommutative, matrix convexity plays a central role \cite{Wit, EW, WW, HM12, HM14, HKM17, zalar,  HL, DK}. There, scalar sets are replaced by graded families of matrix tuples closed under direct sums and compressions. In direct analogy to Kadison duality, Webster–Winkler duality demonstrates that matrix convex sets are dual to operator systems \cite{WW}, providing indispensable geometric tools for operator spaces, $C^*$-algebras, and noncommutative function theory \cite{Pa, DM05, Arv08, DK15}. More recently, these notions have been extended to coordinate-dependent settings through $\Gamma$-convexity, including noncommutative partial convexity \cite{HHLM08, DHM, JKMMP21, JKMMP22, KMS26}, where sets exhibit convexity only with respect to designated coordinate blocks.
	
	In the commutative setting, Kadison duality was recently extended to partially convex sets, establishing a correspondence with free order unit modules over continuous function algebras \cite{Str26}.  A natural progression is to consider sets equipped with coordinate-wise convex structure simultaneously across multiple variables, specifically, subsets $K \subseteq \mathbb{R}^n \times \mathbb{R}^m$ whose directional slices are convex in both $x \in \R^n$ and $y \in \R^m$. While biconvex sets appear to be the primary geometric candidate and biaffine polynomials the corresponding function system, a significant geometric obstruction arises: separate slice-convexity is inherently too coarse to guarantee separation by biaffine polynomials. As a result, a biconvex set can be strictly smaller than its biaffine hull $\operatorname{biconv}(K)$, the envelope formed by the intersection of all regions cut out by valid biaffine inequalities on $K$.  
	
	The aim of this paper is to resolve this obstruction and formulate a Kadison-type duality for coordinate-wise convex geometry. By restricting to biaffinely convex sets, those that coincide with their biaffine hull, we establish a duality with finite-dimensional, tensor-factorized Archimedean order unit spaces, which we introduce as biaffine order unit spaces. 
	
	\subsection*{Use of AI} The author used Google’s Gemini for assistance with exposition and with structuring and streamlining mathematical arguments. The main ideas and proof strategies were conceived by the author, who independently checked and verified all mathematical steps and takes full responsibility for the content.
	
	\subsection{Set definitions and ambient embedding}
	
	Throughout, we consider subsets of $\R^{n+m}$ whose points will be denoted by $(x,y),$ where $x \in \R^n$ and $y\in \R^m.$

	\begin{definition}
		Let $V = \mathbb{R}^{n+m} = \mathbb{R}^n \times \mathbb{R}^m$. For a subset $K \subseteq V$, let 
		\[
		\pi_1(K) = \{x \in \mathbb{R}^n \mid \exists y \in \mathbb{R}^m : (x,y) \in K\} \quad \text{and} \quad \pi_2(K) = \{y \in \mathbb{R}^m \mid \exists x \in \mathbb{R}^n : (x,y) \in K\}
		\]
		denote the canonical projections onto $\mathbb{R}^n$ and $\mathbb{R}^m$, respectively. For each fixed $y_0 \in \pi_2(K)$ and $x_0 \in \pi_1(K)$, denote the directional slices by
		\[
		K_{y_0} = \{x \in \mathbb{R}^n \mid (x, y_0) \in K\} \quad \text{and} \quad K_{x_0} = \{y \in \mathbb{R}^m \mid (x_0, y) \in K\}.
		\]
		The set $K \subseteq \mathbb{R}^{n+m}$ is a \textbf{biconvex set} (or partially convex in both $x$ and $y$) if
		\begin{enumerate}[(i)]
			\item for every $y_0 \in \pi_2(K)$, the slice $K_{y_0} \subseteq \mathbb{R}^n$ is a convex set;
			\item for every $x_0 \in \pi_1(K)$, the slice $K_{x_0} \subseteq \mathbb{R}^m$ is a convex set.
		\end{enumerate}
	\end{definition}
	
	In classical convex analysis, the primary tools for probing convex sets are affine functions. For biconvex sets, the natural analogues are functions that behave affinely along each directional slice. 
	
	\begin{definition}
		The space of \textbf{biaffine polynomials}, denoted by $\mathcal{P}_{\mathrm{biaff}}(\R^{n+m})$, consists of all polynomials $p(x,y)$ that are affine in $x$ for every fixed $y$, and affine in $y$ for every fixed $x$. Any such polynomial can be uniquely represented in the form
		\[
		p(x,y) = c_{00} + \sum_{i=1}^n c_{i0}x_i + \sum_{j=1}^m c_{0j}y_j + \sum_{i=1}^n \sum_{j=1}^m c_{ij}x_i y_j,
		\]
		where $c_{00}, c_{i0}, c_{0j}, c_{ij} \in \mathbb{R}$.
		For any $K \subseteq \mathbb{R}^{n+m},$ we set 
		$$
		\mathcal{P}_{\mathrm{biaff}}(K) = \{p|_K \mid p \in \mathcal{P}_{\mathrm{biaff}}(\R^{n+m})  \}.
		$$
\end{definition}

Just as a closed convex set can be characterized as an intersection of closed affine half-spaces, one can consider sets cut out by families of biaffine inequalities.

\begin{definition}\label{def:bihull}
	For a subset $K \subseteq \mathbb{R}^n \times \mathbb{R}^m$, the \textbf{biaffine hull} (or the set defined by all valid biaffine inequalities on $K$) is defined by
	\[
	\mathrm{biconv}(K) := \bigcap \left\{ \{(x,y) \in \mathbb{R}^{n+m} \mid p(x,y) \ge 0\} \;\middle|\; p \in \mathcal{P}_{\mathrm{biaff}}(\mathbb{R}^{n+m}), \; p|_K \ge 0 \right\}.
	\]
\end{definition}

Although biaffine polynomials contain bilinear cross-terms $x_i y_j$, they can be ``linearized'' by mapping the ambient space $\mathbb{R}^{n+m}$ into a higher-dimensional tensor product space via the Segre embedding. 

\begin{definition}
	Let $D = (n+1)(m+1)$. The canonical biaffine embedding $\Phi: \mathbb{R}^{n+m} \to \mathbb{R}^D$ is defined by
	\[
	\Phi(x,y) := \begin{pmatrix} 1 \\ x \end{pmatrix} \otimes \begin{pmatrix} 1 \\ y \end{pmatrix} = \left(1, (x_i)_{i=1}^n, (y_j)_{j=1}^m, (x_i y_j)_{i=1,j=1}^{n,m}\right)^T.
	\]
	Denote $\Sigma_{n,m} := \Phi(\mathbb{R}^{n+m}) \subseteq \mathbb{R}^D.$ 
\end{definition}

The embedding $\Phi$ transforms biaffine inequalities on $\mathbb{R}^{n+m}$ into linear hyperplanes in $\mathbb{R}^D$. This geometric lifting enables us to characterize sets that coincide with their biaffine hull in terms of classical convex hulls in the ambient tensor space.

\begin{lemma}\label{le:phieq}
	Let $K \subseteq \mathbb{R}^{n+m}$ be a compact set.
	\begin{enumerate}
		\item[(a)] The biaffine hull of $K$ is given by
		\[
		\operatorname{biconv}(K) = \Phi^{-1}(\operatorname{conv}(\Phi(K))).
		\]
		\item[(b)] The following conditions are equivalent:
		\begin{enumerate}
			\item[(1)] $K = \operatorname{biconv}(K)$;
			\item[(2)] $\Phi(K) = \operatorname{conv}(\Phi(K)) \cap \Sigma_{n,m}$;
			\item[(3)] $\Phi(z) \notin \operatorname{conv}(\Phi(K))$ for every $z \in \mathbb{R}^{n+m} \setminus K$.
		\end{enumerate}
	\end{enumerate}
\end{lemma}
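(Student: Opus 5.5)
The plan is to linearize everything through $\Phi$. The basic observation is that the map $p\mapsto \ell_p$, where
\[
\ell_p(w)=\sum_{i=0}^n\sum_{j=0}^m c_{ij}w_{ij},
\]
identifies $\PhiAff(\R^{n+m})$ with the space of linear functionals on $\R^D$, and that $p=\ell_p\circ\Phi$. In this notation, $p|_K\ge 0$ holds exactly when $\ell_p\ge0$ on $\Phi(K)$, equivalently on $\operatorname{conv}(\Phi(K))$. Since $K$ is compact and $\Phi$ is continuous, $\Phi(K)$ is compact. Because we are in finite dimensions, $C:=\operatorname{conv}(\Phi(K))$ is therefore compact, and in particular closed. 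This closedness is the only point where compactness of $K$ is used.

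For (a), the inclusion $\Phi^{-1}(C)\subseteq\operatorname{biconv}(K)$ is immediate. If $\Phi(z)\in C$ and $p|_K\ge0$, then $p(z)=\ell_p(\Phi(z))\ge0$ because $\ell_p\ge0$ on $C$.

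For the reverse inclusion, take $z$ with $\Phi(z)\notin C$. Since $C$ is compact and convex, the strict separating hyperplane theorem gives a linear functional $\ell$ on $\R^D$ and a constant $\alpha$ with $\ell(\Phi(z))<\alpha<\ell(w)$ for all $w\in C$. The constant $\alpha$ is absorbed using the first coordinate: every point of $\Sigma_{n,m}$ has first coordinate $w_{00}=1$, and so does every point of $C$. Hence $\ell'(w):=\ell(w)-\alpha w_{00}$ is again linear, satisfies $\ell'>0$ on $C$, and has $\ell'(\Phi(z))<0$. Writing $\ell'=\ell_p$ for a biaffine $p$, we get $p|_K\ge0$ and $p(z)<0$, so $z\notin\operatorname{biconv}(K)$. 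I expect this homogenization step to be the only real obstacle, and it is resolved by the constant coordinate $1$ in the embedding.

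For (b), I use that $\Phi$ is injective, since $x$ and $y$ can be read off from the linear coordinates. By (a), $\operatorname{biconv}(K)=K$ holds iff $\Phi^{-1}(C)=K$. Applying the injective map $\Phi$, this is equivalent to $\Phi(\Phi^{-1}(C))=\Phi(K)$. Moreover $\Phi(\Phi^{-1}(C))=C\cap\Sigma_{n,m}$, which shows (1)$\Leftrightarrow$(2).

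For (1)$\Leftrightarrow$(3), note first that $K\subseteq\Phi^{-1}(C)$ always holds. So $K=\Phi^{-1}(C)$ is equivalent to no point outside $K$ having its image in $C$, which is precisely condition (3).
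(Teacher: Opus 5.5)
Your proposal is correct and follows essentially the same route as the paper: you linearize biaffine polynomials through $\Phi$, apply strict separation from the compact set $\operatorname{conv}(\Phi(K))$ for (a), and use injectivity of $\Phi$ for (b). The differences are cosmetic: you absorb the separating constant into the $w_{00}$-coordinate explicitly, whereas the paper writes $p=\langle c,\Phi(\cdot)\rangle-\gamma$, and you prove (1)$\Leftrightarrow$(3) directly rather than (2)$\Leftrightarrow$(3).
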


\begin{proof}
	(a) By Definition~\ref{def:bihull}, $z \in \operatorname{biconv}(K)$ if and only if every biaffine polynomial non-negative on $K$ is non-negative at $z$. Every such polynomial is of the form $p(w) = \langle c, \Phi(w) \rangle - \gamma$ for some $c \in \mathbb{R}^D$ and $\gamma \in \mathbb{R}$. Therefore,
	\begin{align*}
		z \notin \operatorname{biconv}(K) 
		&\iff \exists c \in \mathbb{R}^D, \gamma \in \mathbb{R} \text{ such that } \langle c, \Phi(w) \rangle \ge \gamma \; (\forall w \in K) \text{ and } \langle c, \Phi(z) \rangle < \gamma \\
		&\iff \exists c \in \mathbb{R}^D, \gamma \in \mathbb{R} \text{ such that } \langle c, W \rangle \ge \gamma \; (\forall W \in \operatorname{conv}(\Phi(K))) \text{ and } \langle c, \Phi(z) \rangle < \gamma \\
		&\iff \Phi(z) \notin \operatorname{conv}(\Phi(K)),
	\end{align*}
	where the final equivalence is the strict Hahn-Banach separation theorem applied to the compact convex set $\operatorname{conv}(\Phi(K)) \subseteq \mathbb{R}^D$ and the point $\Phi(z)$. Taking the contrapositive yields $z \in \operatorname{biconv}(K) \iff \Phi(z) \in \operatorname{conv}(\Phi(K))$, which establishes (a).
	
	(b) Applying $\Phi$ to both sides of the identity in (a) and intersecting with the image $\Sigma_{n,m} = \Phi(\mathbb{R}^{n+m})$ gives
	\[
	\Phi(\operatorname{biconv}(K)) = \operatorname{conv}(\Phi(K)) \cap \Sigma_{n,m}.
	\]
	Since $\Phi: \mathbb{R}^{n+m} \to \Sigma_{n,m}$ is a bijection, $K = \operatorname{biconv}(K)$ holds if and only if $\Phi(K) = \Phi(\operatorname{biconv}(K)) = \operatorname{conv}(\Phi(K)) \cap \Sigma_{n,m}$, proving $(1) \Leftrightarrow (2)$. The equivalence $(2) \Leftrightarrow (3)$ is immediate since $\Sigma_{n,m} \setminus \Phi(K) = \Phi(\mathbb{R}^{n+m} \setminus K)$.
\end{proof}


\begin{definition}
	\label{def:biaff_convex}
	A compact subset $K \subseteq \mathbb{R}^{n+m}$ is called \textbf{biaffinely convex} if it satisfies any of the following equivalent conditions:
	\begin{enumerate}
		\item $K$ is equal to its biaffine hull:
		\[
		K = \operatorname{biconv}(K) := \bigcap \left\{ \{w \in \mathbb{R}^{n+m} \mid p(w) \ge 0\} \;\middle|\; p \in \mathcal{P}_{\mathrm{biaff}}(\mathbb{R}^{n+m}), \; p|_K \ge 0 \right\}.
		\]
		\item The lifted set $\Phi(K)$ satisfies:
		\[
		\Phi(K) = \operatorname{conv}(\Phi(K)) \cap \Sigma_{n,m}.
		\]
		Equivalently, for every $z \in \mathbb{R}^{n+m} \setminus K$, $\Phi(z) \notin \operatorname{conv}(\Phi(K))$.
	\end{enumerate}
\end{definition}


\begin{lemma}\label{le:sub}
	Let $K \subseteq \mathbb{R}^{n+m}$ be a biaffinely convex set. Then $K$ is a biconvex set.
\end{lemma}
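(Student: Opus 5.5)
We argue directly from characterization (1) of Definition~\ref{def:biaff_convex}, namely $K = \operatorname{biconv}(K)$, and show that each directional slice is convex; by symmetry in the roles of $x$ and $y$ it suffices to treat slices $K_{y_0}$ for $y_0 \in \pi_2(K)$. The key observation is that restricting a biaffine polynomial to a horizontal slice $\{y = y_0\}$ yields an affine function of $x$, so every defining inequality of $\operatorname{biconv}(K)$ cuts out a convex set on that slice.

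Concretely, fix $y_0 \in \pi_2(K)$, take $x_1, x_2 \in K_{y_0}$ and $t \in [0,1]$, and set $x_t = t x_1 + (1-t) x_2$. We must show $(x_t, y_0) \in K$. Since $K = \operatorname{biconv}(K)$, it is enough to check that $p(x_t, y_0) \ge 0$ for every $p \in \mathcal{P}_{\mathrm{biaff}}(\mathbb{R}^{n+m})$ with $p|_K \ge 0$. For such $p$, the map $x \mapsto p(x, y_0)$ is affine by definition of biaffine polynomials, hence
\[
p(x_t, y_0) = t\, p(x_1, y_0) + (1-t)\, p(x_2, y_0) \ge 0,
\]
because $(x_1,y_0),(x_2,y_0) \in K$. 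Thus $(x_t,y_0)$ lies in every half-space $\{p \ge 0\}$ appearing in Definition~\ref{def:bihull}, so $(x_t,y_0) \in \operatorname{biconv}(K) = K$, i.e.\ $x_t \in K_{y_0}$. The same argument with $y \mapsto p(x_0, y)$ affine gives convexity of $K_{x_0}$ for $x_0 \in \pi_1(K)$.

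There is no genuine obstacle; the only point needing care is to use the hull characterization rather than the lifted one, although alternatively one could note via Lemma~\ref{le:phieq} that $\Phi(\cdot, y_0)$ is affine in $x$, so $\Phi(x_t,y_0) = t\Phi(x_1,y_0) + (1-t)\Phi(x_2,y_0) \in \operatorname{conv}(\Phi(K)) \cap \Sigma_{n,m} = \Phi(K)$, and injectivity of $\Phi$ finishes the argument equally well.
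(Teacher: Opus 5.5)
Your proof is correct and is essentially the paper's argument. The paper writes $K_{y_0}$ as an intersection of the convex sets $\{x : p(x,y_0)\ge 0\}$; you verify the same fact pointwise, using the affinity of $x\mapsto p(x,y_0)$ on a convex combination.
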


\begin{proof}
	Since $K$ is biaffinely convex, it coincides with its biaffine hull
	\[
	K = \operatorname{biconv}(K) = \bigcap_{p \in \mathcal{F}} H_p,
	\]
	where $\mathcal{F} = \{p \in \mathcal{P}_{\mathrm{biaff}}(\mathbb{R}^{n+m}) \mid p|_K \ge 0\}$ and $H_p = \{(x,y) \in \mathbb{R}^{n+m} \mid p(x,y) \ge 0\}$.
	
	To establish biconvexity, we verify the convexity of both directional slices.
	First, fix $y_0 \in \pi_2(K)$. The vertical slice of $K$ at $y_0$ is given by
	\[
	K_{y_0} = \{x \in \mathbb{R}^n \mid (x, y_0) \in K\} = \bigcap_{p \in \mathcal{F}} \{x \in \mathbb{R}^n \mid p(x, y_0) \ge 0\}.
	\]
	By definition of $\mathcal{P}_{\mathrm{biaff}}(\mathbb{R}^{n+m})$, for each fixed $y_0$, the restriction $x \mapsto p(x, y_0)$ is an affine polynomial on $\mathbb{R}^n$. Consequently, each set
	\[
	(H_p)_{y_0} = \{x \in \mathbb{R}^n \mid p(x, y_0) \ge 0\}
	\]
	is an affine half-space (or the entire space $\mathbb{R}^n$, or $\emptyset$), which is convex. Since the arbitrary intersection of convex sets is convex, the slice $K_{y_0} = \bigcap_{p \in \mathcal{F}} (H_p)_{y_0}\subseteq \mathbb{R}^n$ is convex  for every $y_0 \in \pi_2(K)$. Similarly, the slice $K_{x_0} \subseteq \R^m$ is convex for all $x_0 \in \pi_1(K).$
\end{proof}

While Lemma \ref{le:sub} shows that every biaffinely convex set is biconvex, the converse generally fails, as we will demonstrate in Section \ref{sec:2}. Before exploring this obstruction, we introduce the geometric morphisms and non-degeneracy conditions needed to formulate our duality.

\begin{definition} \label{def:biiso}
	Two biaffinely convex sets $K, L \subseteq \mathbb{R}^{n+m}$ are isomorphic if there exist affine isomorphisms $\phi: \mathbb{R}^n \to \mathbb{R}^n$ and $\psi: \mathbb{R}^m \to \mathbb{R}^m$ such that the product map
	$$\Theta: \mathbb{R}^{n+m} \longrightarrow \mathbb{R}^{n+m}, \quad \Theta(x, y) := (\phi(x), \psi(y))$$
	restricts to a bijection between $K$ and $L$, i.e., $\Theta(K) = L$.
\end{definition}

\begin{definition}\label{def:nondeg}
	A subset $K \subseteq \mathbb{R}^{n+m}$ is called \textbf{non-degenerate of type $(n,m)$} if the lifted set $\Phi(K)$ spans the full space,
	$$\operatorname{span}(\Phi(K)) = \mathbb{R}^{(n+1)(m+1)}.$$
\end{definition}

\begin{remark}\label{rem:non-deg}
	Note that if $K \subseteq \mathbb{R}^{n+m}$ is non-degenerate, then $\pi_1(K)$ affinely spans $\mathbb{R}^n$ (ensuring $\dim(\operatorname{Aff}(\pi_1(K))) = n+1$), and similarly, $\pi_2(K)$ affinely spans $\mathbb{R}^m$ (ensuring $\dim(\operatorname{Aff}(\pi_2(K))) = m+1$). Indeed,  if  $\pi_1(K)$ failed to affinely span $\R^n$, a nonzero
	affine function of $x$ would vanish on $K$, giving a nonzero vector annihilating every $\Phi(x,y)$ for $(x,y)\in K$.
\end{remark}


\begin{lemma}\label{le:inj}
	If $K \subseteq \mathbb{R}^{n+m}$ is non-degenerate, the restriction map
	$$\mathcal{P}_{\textnormal{biaff}}(\mathbb{R}^{n+m}) \to \mathcal{C}(K), \quad p \mapsto p\vert{}_K$$
	is injective. In particular, every $p \in \mathcal{P}_{\textnormal{biaff}}(K)$ has a unique coefficient vector $c \in \mathbb{R}^{(n+1)(m+1)}$ such that $p(x,y) = \langle c, \Phi(x,y)\rangle$.  
\end{lemma}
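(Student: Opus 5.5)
The argument is pure linear algebra: we identify biaffine polynomials with linear functionals on $\mathbb{R}^D$, $D=(n+1)(m+1)$, via the embedding $\Phi$. We first record that the map $c \mapsto p_c$, where $p_c(x,y) := \langle c, \Phi(x,y)\rangle$, is a linear bijection $\mathbb{R}^D \to \PhiAff(\mathbb{R}^{n+m})$. Surjectivity is the coefficient representation in the definition of biaffine polynomials, where $c$ collects $c_{00}, c_{i0}, c_{0j}, c_{ij}$ in the coordinate ordering of $\Phi$. Injectivity is the uniqueness of that representation: distinct coefficient vectors give distinct polynomials, since the monomials $1, x_i, y_j, x_iy_j$ are linearly independent as functions on $\mathbb{R}^{n+m}$.

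Next we prove injectivity of the restriction map. Since restriction is linear, it suffices to show its kernel is trivial. Suppose $p \in \PhiAff(\mathbb{R}^{n+m})$ with $p|_K = 0$, and write $p = p_c$. Then $\langle c, \Phi(w)\rangle = 0$ for every $w \in K$, so $c$ lies in the orthogonal complement of $\Phi(K)$, hence of $\operatorname{span}(\Phi(K))$. Non-degeneracy (Definition~\ref{def:nondeg}) says this span is all of $\mathbb{R}^D$, so $c \perp \mathbb{R}^D$. Therefore $c = 0$ and $p = 0$.

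The ``in particular'' clause follows directly. An element of $\PhiAff(K)$ is by definition $q|_K$ for some $q \in \PhiAff(\mathbb{R}^{n+m})$, and $q = p_c$ for some $c$, which gives existence. For uniqueness, suppose $p_c|_K = p_{c'}|_K$. Then $p_{c-c'}|_K = 0$, so the previous step gives $p_{c-c'} = 0$, and the bijection from the first step gives $c = c'$.

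There is no substantial obstacle here. The only point requiring care is to keep separate the two injectivities in play: coefficients $\to$ polynomials, which is automatic, and polynomials $\to$ functions on $K$, which is where non-degeneracy is used.
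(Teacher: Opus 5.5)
Your proof is correct and follows the paper's argument: a polynomial vanishing on $K$ has a coefficient vector orthogonal to $\operatorname{span}(\Phi(K)) = \mathbb{R}^{(n+1)(m+1)}$, so the vector is zero. You also spell out the coefficient-to-polynomial bijection and the uniqueness clause, which the paper leaves implicit; this is a welcome addition but not a different route.
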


\begin{proof}
	If $p\vert{}_K \equiv 0$, then $\langle c, \Phi(w)\rangle = 0$ for all $w \in K$. Because $\operatorname{span}(\Phi(K)) = \mathbb{R}^{(n+1)(m+1)}$, $c$ is orthogonal to the entire ambient space, forcing $c = 0$.  
\end{proof}

\subsection{Paper overview and main result}
This paper establishes a Kadison-type duality for biaffinely convex sets. In classical Kadison duality, compact convex sets are categorically equivalent to Archimedean order unit spaces of continuous affine functions \cite{Al}. While recent generalizations to partially convex sets model one-sided convexity through dual function systems that are free modules over commutative $C^*$-algebras \cite{Str26}, restricting both coordinate directions to affine dependencies simplifies the dual geometry. In fact, the dual object transitions from an infinite-dimensional module bundle to a finite-dimensional, tensor-factorized Archimedean order unit (AOU) space of dimension $(n+1)(m+1)$.

The primary geometric candidate for this two-sided affine structure is the class of biconvex sets, whose natural dual function space is $\mathcal{P}_{\text{biaff}}(K)$. However, separate slice-convexity is intrinsically insufficient to guarantee separation by biaffine polynomials. Because biaffine polynomials linearize via the non-linear embedding $\Phi$, separating an exterior point $z \notin K$ requires that $\Phi(z) \notin \operatorname{conv}(\Phi(K))$. However, taking convex combinations in the higher-dimensional ambient space mixes points across different coordinate slices. As demonstrated in Section \ref{sec:2}, an exterior point $z \notin K$ can have its lifted image trapped inside $\operatorname{conv}(\Phi(K))$, preventing biaffine separation and resulting in $K \subsetneq \operatorname{biconv}(K)$.


Because biaffine polynomials can only characterize sets cut out by their own inequalities, the geometric objects are restricted to biaffinely convex sets ($K = \operatorname{biconv}(K)$). On this class, strict separation holds by construction. 
In Section \ref{sec:3}, we introduce the abstract dual objects, which we call  biaffine order unit spaces. These are finite-dimensional AOU spaces defined on tensor products $E_1 \otimes E_2$ whose positive cones are governed entirely by unentangled rank-1 product states. 

Conversely, to recover the geometric object from an abstract biaffine order unit space $A \cong E_1 \otimes E_2$, one cannot simply take its full state space $S(A)$. In fact, $S(A)$ is convex and corresponds to an ambient convex hull containing higher-rank (entangled) states. Reconstructing the set requires isolating the unentangled rank-1 product states, the biaffine state space $S_{\mathrm{biaff}}(A)$, which represent genuine point evaluations. Choosing affine coordinate bases for $E_1$ and $E_2$ then embeds $S_{\mathrm{biaff}}(A)$ into $\mathbb{R}^{n+m}$ as the coordinate biaffine state space $K_A$, uniquely determined up to product-affine isomorphism from Definition \ref{def:biiso}. Our main result proves that compact non-degenerate biaffinely convex sets and biaffine order unit spaces are dual to each other:   

\begin{theorem}\phantom{}\\ \vspace{-4mm}\label{thm:dual}
	\begin{enumerate}[(a)] 
		\item Let $K \subseteq \R^{n+m}$ be a compact non-degenerate biaffinely convex set. Then $\PhiAff(K)$ is a biaffine order unit space of type $(n,m)$ and $K$ is isomorphic to $K_{\PhiAff(K)}.$ 
		\item 	Let $A = E_1 \otimes E_2$ be an abstract biaffine order unit space. For any choice of coordinate bases for $E_1$ and $E_2$, the associated coordinate biaffine state space $K_A \subseteq \mathbb{R}^{n+m}$ is a compact non-degenerate biaffinely convex set, and $A$ is isomorphic to $\mathcal{P}_{\mathrm{biaff}}(K_A)$.
	\end{enumerate}
\end{theorem}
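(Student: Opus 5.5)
The proof runs through the Segre lift $\Phi$. We rely on the definitions that Section~\ref{sec:3} will supply. A biaffine order unit space is an AOU space on $E_1\otimes E_2$, where $E_1,E_2$ carry distinguished unit functionals, so that they are effectively affine function spaces of dimensions $n+1$ and $m+1$. Its order unit is $e_1\otimes e_2$, and its positive cone is exactly the set of tensors that are nonnegative on all product states $\varphi_1\otimes\varphi_2$ with $\varphi_i$ unital. The biaffine state space $S_{\mathrm{biaff}}(A)$ is the set of such product states. After choosing bases of $E_1,E_2$ whose first vector is the unit, it becomes the set $K_A$ of pairs $(x,y)$ with $\Phi(x,y)$ a state of $A$.

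For (a), set $E_1=\{1,x_1,\dots,x_n\}$-span and $E_2$ analogously. The multiplication map $E_1\otimes E_2\to \PhiAff(K)$ is surjective. It is injective by Lemma~\ref{le:inj}, since non-degeneracy makes the $(n+1)(m+1)$ monomials linearly independent on $K$. We order $\PhiAff(K)$ by pointwise positivity on $K$ with unit $1=1\otimes1$; this is Archimedean because $K$ is compact. The identification $A=\R^D$ (coefficients) shows that the state space is $\operatorname{conv}(\Phi(K))$ (closed because $K$ is compact). Indeed, states are exactly vectors $W$ with $\langle c,W\rangle\ge0$ whenever $c$ is nonnegative on $\Phi(K)$, which by Hahn--Banach is $\overline{\operatorname{conv}}\,\Phi(K)=\operatorname{conv}\Phi(K)$. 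A product state corresponds to a rank-one tensor $\binom{1}{x}\otimes\binom1y=\Phi(x,y)$. Hence $K_{\PhiAff(K)}=\Phi^{-1}(\operatorname{conv}\Phi(K))=\operatorname{biconv}(K)=K$ by Lemma~\ref{le:phieq}(a) and biaffine convexity, and the positive cone is governed by point evaluations on $K$, i.e.\ by product states. A different choice of coordinate bases changes $K_A$ only by affine isomorphisms $\phi,\psi$ fixing the unit coordinate. This yields the isomorphism in the sense of Definition~\ref{def:biiso}.

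For (b), fix bases and identify $A\cong\R^D$ with dual pairing $\langle c,W\rangle$. Then $K_A=\{z:\Phi(z)\in S(A)\}$. The key steps are as follows. \emph{Compactness:} $S(A)$ is compact since $A$ is a finite-dimensional AOU space, and $\Phi$ is a homeomorphism onto the closed set $\Sigma_{n,m}$ (coordinates $x,y$ are read off linearly), so $K_A$ is closed and bounded. \emph{Key identity:} since the cone is determined by product states, $S(A)=\operatorname{conv}\Phi(K_A)$. The inclusion $\supseteq$ is clear. Conversely, $c\ge0$ on $\Phi(K_A)$ gives $c\in A_+$, so every state is nonnegative on it; Hahn--Banach then places each state in the closed convex hull, which equals the hull by compactness. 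Lemma~\ref{le:phieq}(b)(2) then yields biaffine convexity: $\operatorname{conv}\Phi(K_A)\cap\Sigma_{n,m}=S(A)\cap\Sigma_{n,m}=\Phi(K_A)$. \emph{Non-degeneracy:} if $c\perp\Phi(K_A)$, then $\pm c\in A_+$, so $c=0$ because the cone of an AOU space is proper. Finally, the map $A\to\PhiAff(K_A)$, $a\mapsto(z\mapsto\langle a,\Phi(z)\rangle)$, is linear, unital, bijective by Lemma~\ref{le:inj}, and an order isomorphism because $a\ge0$ iff $a\ge0$ on product states, i.e.\ on $K_A$.

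The main obstacle is matching the abstract definition of Section~\ref{sec:3} with this concrete picture. Two points need care. First, unital product states on $E_1\otimes E_2$ must be exactly $\Phi(x,y)$ in coordinates; this requires factorization of the unit and each $\varphi_i$ being unital on $e_i$. Second, ``governed by product states'' must really give $A_+=\{a:\varphi_1\otimes\varphi_2(a)\ge0\}$ with the relevant product states closed and bounded. If the definition only yields the cone up to closure, the Archimedean property will be used to close the gap.
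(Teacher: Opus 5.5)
Your overall route matches the paper's. You lift by $\Phi$, identify $S(A)$ with $\operatorname{conv}\Phi(K)$ in (a), and show $S(A)=\operatorname{conv}\Phi(K_A)$ by separation in (b). You then intersect with $\Sigma_{n,m}$ and apply Lemma~\ref{le:phieq}. Your non-degeneracy argument ($c\perp\Phi(K_A)$ forces $\pm c\in A^+$, hence $c=0$) is a neat shortcut for the paper's dual-cone-spanning argument.

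\textbf{The gap.} Your working definition drops part of the structure. In Definition~\ref{def:bimodule}, $E_1$ and $E_2$ are AOU spaces with their own cones, linked to $A^+$ by Axiom~(3). An isomorphism in the sense of Definition~\ref{def:biaff_iso} must factor as $\Psi_1\otimes\Psi_2$ with \emph{each} factor a unital order isomorphism.

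In (b) you only show that $a\mapsto\langle a,\Phi(\cdot)\rangle$ is a global unital order isomorphism. It does factor, with $\Psi_1(e)(x)=\phi_x(e)$. But $\Psi_1(E_1^+)=\operatorname{Aff}(\pi_1(K_A))^+$ is a separate claim. It is exactly the chain $e\in E_1^+\iff e\otimes u_2\in A^+\iff \phi_x(e)\ge 0$ for all $(x,y)\in K_A$, i.e.\ Axiom~(3) combined with Lemma~\ref{le:extp}(iv). This is the middle part of the proof of Proposition~\ref{prop:fniso}.

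The step is not cosmetic. Your argument never uses Axiom~(3), and without it (b) is false. Take $E_1=\operatorname{span}\{1,t\}$ with cone the functions nonnegative on $[-1,2]$, take $E_2=\operatorname{Aff}([0,1])$, and let $A^+$ be the biaffine polynomials nonnegative on $[0,1]^2$.
\begin{itemize}
\item Axioms~(1)--(2) hold, $K_A=[0,1]^2$, and your global map is an order isomorphism.
\item Any unital order isomorphism $E_1\to\operatorname{Aff}([0,1])$ is a pullback along an affine bijection $[0,1]\to[-1,2]$.
\item Tensoring such a map with any unital order isomorphism on the second factor cannot carry $A^+$ onto $\PhiAff([0,1]^2)^+$. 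For instance, $t\ge 0$ on $[0,1]^2$, but its image is negative somewhere on $[0,1]^2$.
\end{itemize}

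\textbf{Related omissions.} The same issue affects (a) and your description of $K_A$.
\begin{itemize}
\item To conclude that $\PhiAff(K)$ \emph{is} a biaffine order unit space, you must give $E_1,E_2$ AOU structures: $\operatorname{Aff}(\pi_i(K))$ with pointwise cones, as in Proposition~\ref{prop:fnsp}. Only then do the evaluations $\operatorname{ev}_x\otimes\operatorname{ev}_y$ lie in $E_1^{*+}\odot E_2^{*+}$, as Axiom~(2) requires.
\item You must also verify Axiom~(3). This uses that every $x\in\pi_1(K)$ has some $y$ with $(x,y)\in K$.
\item The paper defines $S_{\mathrm{biaff}}(A)=S(A)\cap(S(E_1)\odot S(E_2))$, whereas you take $K_A=\{z:\Phi(z)\in S(A)\}$. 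This presumes that a state of $A$ of the form $\phi\otimes\psi$ with $\phi(u_1)=\psi(u_2)=1$ automatically has $\phi\in S(E_1)$ and $\psi\in S(E_2)$. That is Lemma~\ref{le:lambda}(b), and it must be proved from $e\otimes u_2\in A^+$ for $e\in E_1^+$.
\end{itemize}
Your closing paragraph flags ``matching the definition,'' but worries about the unit and the closure of the cone. The actual missing ingredient is the factor cones and Axiom~(3).
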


Finally, in Section \ref{sec:4}, we elevate this object-wise correspondence to a contravariant categorical equivalence. We introduce the category $\mathbf{BiAffConv}$ of compact non-degenerate biaffinely convex sets with coordinate-wise affine morphisms, 
and the category $\mathbf{BiAOU}$ of biaffine order unit spaces whose morphisms are factorized unital positive linear maps. 
In Theorem~\ref{thm:categorical_duality}, we prove that $\mathbf{BiAffConv}^{\mathrm{op}}$ is categorically equivalent to $\mathbf{BiAOU}$.

\section{The obstruction: separation failure of biaffine polynomials}\label{sec:2}
Central to convex analysis is the separation of a point from a closed convex set via affine hyperplanes. This separation property ensures that every closed convex set is an intersection of closed half-spaces; its geometry is thus entirely determined by affine functionals, establishing the space of all affine functionals as the canonical dual object.
In this section, we study the separation capabilities of biaffine polynomials and exhibit an explicit counterexample demonstrating that biconvexity alone does not guarantee biaffine separation.

\subsection{The natural candidate and the separation criterion}
In analogy to classical convexity (affine functions) and partial convexity (partially affine functions), $\mathcal{P}_{\mathrm{biaff}}(K)$ seems to be the natural dual candidate for a biconvex set $K$. To analyze the separation properties of $\mathcal{P}_{\text{biaff}}(K)$, we first reformulate biaffine polynomials in terms of the linear geometry of the ambient tensor space $\mathbb{R}^D$ via the embedding $\Phi$.   

\begin{lemma}
	\label{lem:linearization}
	For every biaffine polynomial $p \in \mathcal{P}_{\mathrm{biaff}}(\mathbb{R}^{n+m})$, there exists a unique vector $c \in \mathbb{R}^D$ such that
	\[
	p(x,y) = \langle c, \Phi(x,y) \rangle \quad \text{for all } (x,y) \in \mathbb{R}^{n+m}.
	\]
	Conversely, for every vector $c \in \mathbb{R}^D$, the function $(x,y) \mapsto \langle c, \Phi(x,y) \rangle$ is a biaffine polynomial in $\mathcal{P}_{\mathrm{biaff}}(\mathbb{R}^{n+m})$.
\end{lemma}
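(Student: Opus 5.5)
The plan is to read off the statement directly from the coordinate description of $\Phi$ and the representation of biaffine polynomials given in the definition of $\PhiAff(\R^{n+m})$. First I would fix the ordering of coordinates of $\R^D$ used in the definition of $\Phi$, namely $(1,(x_i)_{i},(y_j)_{j},(x_iy_j)_{i,j})$. Then I would index a vector $c \in \R^D$ as $(c_{00},(c_{i0})_i,(c_{0j})_j,(c_{ij})_{i,j})$ in the same order.

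\emph{Existence.} Given $p \in \PhiAff(\R^{n+m})$, the definition provides coefficients $c_{00},c_{i0},c_{0j},c_{ij}$ with
\[
p(x,y)=c_{00}+\sum_i c_{i0}x_i+\sum_j c_{0j}y_j+\sum_{i,j}c_{ij}x_iy_j .
\]
Collecting these coefficients into $c$ gives $\langle c,\Phi(x,y)\rangle = p(x,y)$ termwise.

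\emph{Uniqueness.} This is the only step needing an argument, and it is mild. Suppose $\langle c,\Phi(w)\rangle=\langle c',\Phi(w)\rangle$ for all $w$. Then the polynomial $\langle c-c',\Phi(x,y)\rangle$ vanishes identically on $\R^{n+m}$. Its monomials $1,x_i,y_j,x_iy_j$ are distinct, so it is the zero polynomial, and hence $c=c'$. This uses the fact that a real polynomial vanishing on all of $\R^{n+m}$ has all coefficients zero; this is the same fact underlying the uniqueness of the representation in the definition.

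An alternative route is to show that $\operatorname{span}\Phi(\R^{n+m})=\R^D$ by choosing explicit points. For example, $(0,0)$, $(e_i,0)$, $(0,e_j)$ and $(e_i,e_j)$ give lifted vectors whose differences produce all standard basis vectors of $\R^D$. One then argues exactly as in Lemma~\ref{le:inj}.

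\emph{Converse.} For $c\in\R^D$, the function $\langle c,\Phi(x,y)\rangle$ expands to the displayed form. For fixed $y$ it is $c_{00}+\sum_j c_{0j}y_j+\sum_i\bigl(c_{i0}+\sum_j c_{ij}y_j\bigr)x_i$, which is affine in $x$. Symmetrically, for fixed $x$ it is affine in $y$. Therefore it lies in $\PhiAff(\R^{n+m})$.

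I expect no real obstacle. The only point requiring care is justifying uniqueness, and I would present it via the explicit spanning points above, since that argument is self-contained.
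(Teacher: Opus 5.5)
Your proposal is correct and follows the paper's proof: existence comes from reading the coefficients off the defining representation, and uniqueness from the linear independence of the monomials $1, x_i, y_j, x_iy_j$. Your explicit-spanning-points variant, showing $\operatorname{span}(\Sigma_{n,m}) = \R^D$ via $(0,0)$, $(e_i,0)$, $(0,e_j)$, $(e_i,e_j)$, is an equivalent, self-contained dual form of that independence, and your direct check that $\langle c,\Phi(\cdot)\rangle$ is affine in each variable spells out the converse, which the paper leaves implicit.
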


\begin{proof}
	Let $c = (c_{00}, (c_{i0})_{i=1}^n, (c_{0j})_{j=1}^m, (c_{ij})_{i=1,j=1}^{n,m})^T \in \mathbb{R}^D$. Taking the standard Euclidean inner product with $\Phi(x,y)$ gives:
	\[
	\langle c, \Phi(x,y) \rangle = c_{00} + \sum_{i=1}^n c_{i0} x_i + \sum_{j=1}^m c_{0j} y_j + \sum_{i=1}^n \sum_{j=1}^m c_{ij} x_i y_j = p(x,y).
	\]
	Linear independence of the standard monomial basis $\{1, x_i, y_j, x_i y_j\}$ guarantees uniqueness of the coefficient vector $c$.
\end{proof}

By representing each biaffine polynomial as a linear functional $\langle c, \Phi(\cdot) \rangle$, the problem of separating an exterior point $z \in \mathbb{R}^{n+m} \setminus K$ using a biaffine polynomial reduces to separating the lifted point $\Phi(z)$ from the lifted set $\Phi(K)$ via an affine hyperplane in $\mathbb{R}^D$.

\begin{proposition}
	Let $K \subseteq \mathbb{R}^{n+m}$ be a compact set, and let $z \in \mathbb{R}^{n+m} \setminus K$. If $\Phi(z) \notin \operatorname{conv}(\Phi(K))$, then there exists a biaffine polynomial $p \in \mathcal{P}_{\mathrm{biaff}}(\mathbb{R}^{n+m})$ such that
	\[
	p(w) \ge 0 \quad \text{for all } w \in K, \qquad \text{and} \qquad p(z) < 0.
	\]
\end{proposition}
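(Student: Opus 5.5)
The plan is to lift the problem to $\mathbb{R}^D$ via $\Phi$, separate there by a hyperplane, and pull the separating functional back through Lemma~\ref{lem:linearization}.

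\emph{Step 1: compactness of the lifted hull.} Since $K$ is compact and $\Phi$ is continuous (its coordinates are polynomials), $\Phi(K)$ is compact in $\mathbb{R}^D$. By Carathéodory's theorem, $\operatorname{conv}(\Phi(K))$ is the image of the compact set $\Phi(K)^{D+1} \times \Delta_D$ under the continuous map $(W_0,\dots,W_D,\lambda)\mapsto \sum_k \lambda_k W_k$, where $\Delta_D$ is the standard simplex. Hence $\operatorname{conv}(\Phi(K))$ is compact and convex, and in particular closed.

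\emph{Step 2: strict separation in $\mathbb{R}^D$.} The point $\Phi(z)$ lies outside the closed convex set $C := \operatorname{conv}(\Phi(K))$. We let $W^\ast$ be the nearest point of $C$ to $\Phi(z)$, which exists by compactness, and set $c := W^\ast - \Phi(z) \neq 0$. The variational inequality for metric projections gives $\langle c, W - W^\ast\rangle \ge 0$ for all $W \in C$. Setting $\gamma := \langle c, W^\ast\rangle - \tfrac12\|c\|^2$, we obtain
\[
\langle c, W\rangle \ge \langle c, W^\ast\rangle > \gamma \quad \text{for all } W \in C.
\]
Moreover,
\[
\langle c, \Phi(z)\rangle = \langle c, W^\ast\rangle - \|c\|^2 < \gamma .
\]
Alternatively, one can simply invoke the strict Hahn--Banach separation theorem, as was done in the proof of Lemma~\ref{le:phieq}.

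\emph{Step 3: pulling back.} We absorb the constant into the first coordinate, which pairs with the constant entry $1$ of $\Phi$. Put $\tilde c := c - \gamma e_{00}$, so that $\langle \tilde c, \Phi(w)\rangle = \langle c,\Phi(w)\rangle - \gamma$ for every $w \in \mathbb{R}^{n+m}$. By the converse part of Lemma~\ref{lem:linearization}, $p(w) := \langle \tilde c, \Phi(w)\rangle$ is a biaffine polynomial. Since $\Phi(K) \subseteq C$, Step 2 gives $p(w) > 0 \ge 0$ on $K$, and $p(z) < 0$.

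None of these steps is a real obstacle. The only point needing care is the closedness of $\operatorname{conv}(\Phi(K))$ in Step 1, which is what makes strict separation possible, and it follows from compactness of $K$ together with Carathéodory's theorem.
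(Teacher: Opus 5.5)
Your proposal is correct and follows the paper's proof. Both lift via $\Phi$, strictly separate $\Phi(z)$ from the compact convex set $\operatorname{conv}(\Phi(K))$ in $\mathbb{R}^D$, and pull the separating functional back through Lemma~\ref{lem:linearization}. Your Carath\'eodory argument for compactness of the hull and your explicit nearest-point separation simply fill in details that the paper treats as standard.
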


\begin{proof}
	Since $K$ is compact and $\Phi$ is continuous, the lifted image $\Phi(K) \subset \mathbb{R}^D$ is compact, and hence its convex hull $\mathcal{C} = \operatorname{conv}(\Phi(K))$ is a compact convex subset of $\mathbb{R}^D$.
	
	Because $\Phi(z) \notin \mathcal{C}$, the strict Hahn--Banach separation theorem guarantees the existence of a vector $c \in \mathbb{R}^D$ and a scalar $\gamma \in \mathbb{R}$ such that:
	\[
	\langle c, W \rangle \ge \gamma \quad \text{for all } W \in \mathcal{C}, \qquad \text{and} \qquad \langle c, \Phi(z) \rangle < \gamma.
	\]
	Define $p(x,y) := \langle c, \Phi(x,y) \rangle - \gamma$. By Lemma~\ref{lem:linearization}, $p \in \mathcal{P}_{\mathrm{biaff}}(\mathbb{R}^{n+m})$. Since $\Phi(w) \in \mathcal{C}$ for all $w \in K$, we have $p(w) \ge 0$ on $K$, while $p(z) < 0$.
\end{proof}


\begin{corollary}\label{cor:sep} Let $K \subseteq \mathbb{R}^{n+m}$ be a compact set.
	A point $z \notin K$ is separable by a biaffine polynomial if and only if $\Phi(z) \notin \mathrm{conv}(\Phi(K))$. 
\end{corollary}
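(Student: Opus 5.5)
The plan is to prove the two implications separately. The ``if'' direction is the preceding Proposition. For the ``only if'' direction, suppose $z \notin K$ is separable by a biaffine polynomial $p$. That means $p(w) \ge 0$ for all $w \in K$ and $p(z) < 0$. I would then show that $\Phi(z) \notin \operatorname{conv}(\Phi(K))$. Throughout I fix the meaning of ``separable'' to be exactly the sign conditions in the conclusion of that Proposition.

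For the ``only if'' direction I would use Lemma~\ref{lem:linearization} to write $p(x,y) = \langle c, \Phi(x,y)\rangle$ for a unique $c \in \mathbb{R}^D$. Any constant term of $p$ is absorbed into the coordinate $c_{00}$, since the first entry of $\Phi$ is $1$. This makes $W \mapsto \langle c, W\rangle$ a linear functional on $\mathbb{R}^D$ that is nonnegative on $\Phi(K)$. The set $\{W : \langle c, W\rangle \ge 0\}$ is convex, so the functional is also nonnegative on $\operatorname{conv}(\Phi(K))$. Since $\langle c, \Phi(z)\rangle = p(z) < 0$, it follows that $\Phi(z) \notin \operatorname{conv}(\Phi(K))$.

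Alternatively, the corollary can be read off Lemma~\ref{le:phieq}(a). Separability of $z$ is precisely $z \notin \operatorname{biconv}(K)$ by Definition~\ref{def:bihull}, and by the lemma this is equivalent to $\Phi(z) \notin \operatorname{conv}(\Phi(K))$. There is no real obstacle here. The only point needing care is the compactness of $\operatorname{conv}(\Phi(K))$, which is used in the ``if'' direction to obtain strict separation. This holds because $\Phi(K)$ is a compact subset of the finite-dimensional space $\mathbb{R}^D$, and the convex hull of a compact set in finite dimensions is compact by Carathéodory's theorem. No closure is therefore needed.
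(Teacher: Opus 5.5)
Your proof is correct and follows the paper's argument. You linearize via Lemma~\ref{lem:linearization}, use strict Hahn--Banach separation against the compact set $\operatorname{conv}(\Phi(K))$ for the ``if'' direction, and use convexity of the half-space $\{W : \langle c, W\rangle \ge 0\}$ for the ``only if'' direction; your alternative reading through Lemma~\ref{le:phieq}(a) is also valid, since that lemma's proof is this same equivalence written out.
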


\begin{proof}
	Since every biaffine polynomial on $\mathbb{R}^{n+m}$ is of the form $w \mapsto \ell(\Phi(w))$ for some affine functional $\ell$ on $\mathbb{R}^D$ (Lemma~\ref{lem:linearization}), separating $z$ from $K$ by a biaffine polynomial is equivalent to separating $\Phi(z)$ from the compact set $\Phi(K)$ by an affine hyperplane in $\mathbb{R}^D$, which holds if and only if $\Phi(z) \notin \operatorname{conv}(\Phi(K))$ by the classical Hahn--Banach separation theorem.
\end{proof}

\subsection{A counterexample (failure of biconvex separation)}
The following example demonstrates that a biconvex set need not be biaffinely convex.

\begin{example}\label{ex:no}
	There exists a compact, connected biconvex set $K \subseteq \mathbb{R}^1 \times \mathbb{R}^1$ 
	such that $K \subsetneq \operatorname{biconv}(K)$. In particular, $K$ cannot be represented 
	as the intersection of any family (finite or infinite) of biaffine polynomial inequalities.
	
	Indeed, consider the  L-shaped domain in $\mathbb{R}^2 = \mathbb{R}^1 \times \mathbb{R}^1$,
	\[
	K = ([0,1] \times [0,2]) \cup ([0,2] \times [0,1]).
	\]
	The set $K$ is clearly a compact biconvex set.
	\begin{figure}[htbp]
		\centering
		\begin{tikzpicture}[scale=2.2, >=stealth]
			\draw[->] (-0.3, 0) -- (2.2, 0) node[right] {$x$};
			\draw[->] (0, -0.3) -- (0, 2.2) node[above] {$y$};
			
			\foreach \x in {1, 2} {
				\draw (\x, 0.04) -- (\x, -0.04) node[below] {\small $\x$};
			}
			\foreach \y in {1, 2} {
				\draw (0.04, \y) -- (-0.04, \y) node[left] {\small $\y$};
			}
			
			\fill[blue!12] (0,0) -- (2,0) -- (2,1) -- (1,1) -- (1,2) -- (0,2) -- cycle;
			\draw[thick, blue!60!black] (0,0) -- (2,0) -- (2,1) -- (1,1) -- (1,2) -- (0,2) -- cycle;
			
			
			\node[blue!60!black] at (0.5, 0.5) {\large $K$};
			
			\draw[red!80!black] (1.333, 0.04) -- (1.333, -0.04) node[below] {\footnotesize \color{red!80!black}$\frac43$};
			\draw[red!80!black] (0.04, 1.333) -- (-0.04, 1.333) node[left] {\footnotesize \color{red!80!black}$\frac43$};
			
			
			\node[circle, fill=red, draw=red!80!black, line width=0.8pt, inner sep=1pt, 
			label={[red!80!black]above right:{$z$}}] at (1.333, 1.333) {};
		\end{tikzpicture}
	\end{figure}
	Let $z = (x_z, y_z) = \left(\frac{4}{3}, \frac{4}{3}\right) \notin K$. Consider the canonical 
	biaffine embedding $\Phi: \mathbb{R}^2 \to \mathbb{R}^4$ given by $\Phi(x,y) = (1, x, y, xy)^T$. 
	Evaluating $\Phi$ at $z$ gives
	\[
	\Phi(z) = \Phi\left(\frac{4}{3}, \frac{4}{3}\right) = {
		\renewcommand{\arraystretch}{1.3}
		\begin{pmatrix} 
			1 \\ 
			\frac{4}{3} \\ 
			\frac{4}{3} \\ 
			\frac{16}{9} 
	\end{pmatrix}}.
	\]
	Now consider the following three points belonging to $K$:
	\[
	Q_0 = (0,0) \in K \implies \Phi(Q_0) = (1, 0, 0, 0)^T,
	\]
	\[
	Q_1 = (2,1) \in K \implies \Phi(Q_1) = (1, 2, 1, 2)^T,
	\]
	\[
	Q_2 = (1,2) \in K \implies \Phi(Q_2) = (1, 1, 2, 2)^T.
	\]
	Define the weights $\lambda_0 = \frac{1}{9}$ and $\lambda_1 = \lambda_2 = \frac{4}{9}$. 
	Clearly $\lambda_i > 0$ and $\lambda_0 + \lambda_1 + \lambda_2 = 1$. Moreover,
	\[
	{
		\renewcommand{\arraystretch}{1.3}
		\frac{1}{9}\Phi(Q_0) + \frac{4}{9}\Phi(Q_1) + \frac{4}{9}\Phi(Q_2) = \Phi(z),
	}
	\]
	which proves that $\Phi(z) \in \operatorname{conv}(\Phi(K))$. By Lemma~\ref{le:phieq}(a), 
	\[
	z \in \Phi^{-1}(\operatorname{conv}(\Phi(K))) = \operatorname{biconv}(K).
	\]
	Since $z = (4/3, 4/3) \notin K$, this establishes $K \subsetneq \operatorname{biconv}(K)$. By 
\end{example}

Example \ref{ex:no} demonstrates that the polynomials in $\mathcal{P}_{\text{biaff}}(K)$ cannot distinguish $K$ from its (potentially strictly larger) biaffine hull $\operatorname{biconv}(K)$. To obtain a category of sets whose geometry is characterized by biaffine polynomials, one must therefore restrict attention to sets that coincide with their biaffine hull. On this class, strict separation holds by construction.

\begin{proposition}
	Let $K \subseteq \mathbb{R}^{n+m}$ be a compact biaffinely convex set and let $z \in \mathbb{R}^{n+m} \setminus K$. Then there exists a biaffine polynomial $p \in \mathcal{P}_{\mathrm{biaff}}(\mathbb{R}^{n+m})$ such that
	\[
	p(w) \ge 0 \quad \text{for all } w \in K, \qquad \text{and} \quad p(z) < 0 .
	\]
\end{proposition}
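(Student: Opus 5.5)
The plan is to reduce the statement directly to the preceding Proposition (equivalently, Corollary~\ref{cor:sep}) by verifying its hypothesis $\Phi(z) \notin \operatorname{conv}(\Phi(K))$. This hypothesis is exactly what biaffine convexity provides.

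\emph{Step 1.} Since $K$ is biaffinely convex, Definition~\ref{def:biaff_convex}(2), equivalently Lemma~\ref{le:phieq}(b)(3), gives $\Phi(z) \notin \operatorname{conv}(\Phi(K))$ for every $z \in \mathbb{R}^{n+m}\setminus K$. If one prefers to start from condition (1), $K=\operatorname{biconv}(K)$, then Lemma~\ref{le:phieq}(a) yields $z \notin \Phi^{-1}(\operatorname{conv}(\Phi(K)))$, which is the same statement.

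\emph{Step 2.} $K$ is compact and $\Phi$ is continuous, so $\Phi(K)$ is compact in $\mathbb{R}^D$. By Carathéodory's theorem, $\operatorname{conv}(\Phi(K))$ is therefore compact. Strict Hahn--Banach separation of the point $\Phi(z)$ from this compact convex set gives $c\in\mathbb{R}^D$ and $\gamma\in\mathbb{R}$ with $\langle c,W\rangle\ge\gamma$ on $\operatorname{conv}(\Phi(K))$ and $\langle c,\Phi(z)\rangle<\gamma$. This is precisely the content of the previous Proposition, which can simply be cited.

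\emph{Step 3.} Set $p(x,y)=\langle c,\Phi(x,y)\rangle-\gamma$. By Lemma~\ref{lem:linearization}, $p\in\mathcal{P}_{\mathrm{biaff}}(\mathbb{R}^{n+m})$, with the constant $-\gamma$ absorbed into $c_{00}$. Then $p\ge 0$ on $K$ and $p(z)<0$.

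The argument has no real obstacle. The only point requiring care is compactness of the convex hull, which is needed for strict separation and is handled by Carathéodory's theorem in finite dimension. Everything else is bookkeeping already done in Lemma~\ref{le:phieq} and the preceding Proposition.
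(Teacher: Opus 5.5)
Your proposal is correct and follows the paper's proof exactly: the paper also uses Definition~\ref{def:biaff_convex} to get $\Phi(z) \notin \operatorname{conv}(\Phi(K))$ and then cites Corollary~\ref{cor:sep} (equivalently, the preceding Proposition) for the separating biaffine polynomial. Your appeal to Carath\'eodory's theorem for the compactness of $\operatorname{conv}(\Phi(K))$ is a small justification the paper states without proof.
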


\begin{proof}
	Since $K$ is biaffinely convex and $z \notin K$, we have $\Phi(z) \notin \operatorname{conv}(\Phi(K))$ by Definition~\ref{def:biaff_convex}. The existence of the separating biaffine polynomial then follows directly from Corollary~\ref{cor:sep}.
\end{proof}

\section{Kadison duality for biaffinely convex sets}\label{sec:3}
In the Kadison duality for partially convex sets, the dual function space $\CAff(K)$ forms a free  module of rank $n+1$ over the commutative $C^*$-algebra $\mathcal{C}(\pi_2(K))$. When passing to biaffine convexity, the geometric slices in both coordinates are restricted to affine dependencies. Consequently, the dual object changes from an infinite-dimensional module bundle to a finite-dimensional tensor-factorized Archimedean order unit (AOU) space of dimension $D = (n+1)(m+1)$. 
Throughout, for subsets $S \subseteq V$ and $T \subseteq W$ of real vector spaces $V$ and $W$, we denote the set of elementary (rank-1) tensors by
$$S \odot T := \{ v \otimes w \in V \otimes W \mid v \in S, \, w \in T \}.$$

\subsection{Abstract structure: biaffine order unit spaces}

To model functions that depend affinely on two separate coordinates, we view the dual space as a tensor product $E_1 \otimes E_2$. We require the order unit to factor as $u_1 \otimes u_2$ and the positive cone to be completely governed by coordinate point evaluations.  



\begin{definition}\label{def:bimodule}
	Let $(E_1, u_1, E_1^+)$ and $(E_2, u_2, E_2^+)$ be AOU spaces of dimensions $n+1$ and $m+1$, respectively. A biaffine order unit space of type $(n,m)$ is a tuple $(A, E_1, E_2, u, A^+)$ where $(A, u, A^+)$ is an AOU space 
	satisfying: 
	\begin{enumerate}
		\item 
		$A = E_1 \otimes E_2$ as a vector space, with distinguished order unit $u = u_1 \otimes u_2$.
		
		
		\item 
		The positive dual cone $A^{*+} = \{f \in A^* \mid f(A^+) \ge 0\}$ is generated by its rank-1  tensors,
		\[
		A^{*+} = \operatorname{cone}\Bigl( A^{*+} \cap \bigl(E_1^{*+} \odot E_2^{*+}\bigr) \Bigr).
		\]
		Equivalently, $a \in A^+$ if and only if $(\phi \otimes \psi)(a) \ge 0$ for every $\phi \otimes \psi \in A^{*+} \cap (E_1^{*+} \odot E_2^{*+})$.
		\item The marginal cones coincide with the slices of $A^+$ along the factor order units,
		\[
		E_1^+ = \{e \in E_1 \mid e \otimes u_2 \in A^+\} \quad \text{and} \quad E_2^+ = \{d \in E_2 \mid u_1 \otimes d \in A^+\}.
		\]
	\end{enumerate}
\end{definition}

\begin{remark} \label{re:ax2} The factorization $A = E_1 \otimes E_2$ is part of the defining data of $A$, reflecting the fixed two-party coordinate decomposition of the dual set.
	
	Moreover, note that Axiom (2) implies 
	\[
	E_1^+ \otimes_\epsilon E_2^+ \subseteq A^+,
	\]
	where $E_1^+ \otimes_\epsilon E_2^+ = \{a \in E_1 \otimes E_2 \mid (\phi \otimes \psi)(a) \ge 0 \ \forall \phi \in E_1^{*+}, \psi \in E_2^{*+}\}$ is the injective tensor cone.
\end{remark}

Before verifying that the space $\mathcal{P}_{\text{biaff}}(K)$ satisfies these abstract axioms, we prove two preliminary lemmas.

\begin{lemma}\label{le:cone}
	Let $K \subseteq \mathbb{R}^{n+m}$ be a compact set, and let $A = \mathcal{P}_{\mathrm{biaff}}(K)$ be equipped with the pointwise non-negative cone
	\[
	A^+ = \bigl\{ p \in A \bigm| p(x,y) \ge 0 \quad \forall (x,y) \in K \bigr\}
	\]
	and order unit $u = 1_K$. Let $\mathcal{E}(K) := \{ \operatorname{ev}_{(x,y)} \mid (x,y) \in K \} \subseteq A^*$ denote the set of evaluation functionals on $K$. 
	Then $A^{*+}$ equals the cone generated by $\mathcal{E}(K)$,
	\[
	A^{*+} := \{ \omega \in A^* \mid \omega(A^+) \ge 0 \} = \operatorname{cone}\bigl(\mathcal{E}(K)\bigr).
	\]
\end{lemma}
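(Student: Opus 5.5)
The inclusion $\operatorname{cone}(\mathcal{E}(K)) \subseteq A^{*+}$ is immediate: each $\operatorname{ev}_{(x,y)}$ with $(x,y)\in K$ is linear on $A$ and non-negative on $A^+$ by the definition of the pointwise cone, and $A^{*+}$ is a convex cone. The substance is the reverse inclusion. I will argue by finite-dimensional duality, working inside the finite-dimensional space $A^*$ (note $\dim A \le D=(n+1)(m+1)$; non-degeneracy is not needed here).

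The first step is to show that $C:=\operatorname{cone}(\mathcal{E}(K))$ is closed in $A^*$. The map $K\to A^*$, $w\mapsto \operatorname{ev}_w$, is continuous, since each $p\in A$ is a restriction of a polynomial. Hence $\mathcal{E}(K)$ is compact. Its convex hull $\operatorname{conv}(\mathcal{E}(K))$ is compact by Carathéodory's theorem in finite dimensions. Moreover, $0\notin\operatorname{conv}(\mathcal{E}(K))$, because every $\omega$ in it satisfies $\omega(u)=\omega(1_K)=1$. The cone over a compact convex set not containing the origin is closed: if $t_k\omega_k\to\eta$ with $\omega_k$ in the hull, then $t_k=t_k\omega_k(u)\to\eta(u)$ is bounded, and passing to a subsequence gives $\eta = t\,\omega$ with $\omega$ in the hull. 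This closedness step is the only point requiring care, and using the order unit $1_K$ to normalize is how I would handle it.

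Next, suppose $\omega\in A^{*+}\setminus C$. Since $C$ is a closed convex cone in the finite-dimensional space $A^*$, the separation theorem gives a linear functional on $A^*$, that is, an element $p\in A\cong A^{**}$, with $\eta(p)\ge 0$ for all $\eta\in C$ and $\omega(p)<0$. The first condition applied to $\eta=\operatorname{ev}_w$ says $p(w)\ge0$ for all $w\in K$, so $p\in A^+$. But then $\omega\in A^{*+}$ forces $\omega(p)\ge 0$, a contradiction. Hence $A^{*+}=C$.
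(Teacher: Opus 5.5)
Your proof is correct and follows essentially the same route as the paper. The paper writes $A^+=C^\circ$ and applies the bipolar theorem to get $A^{*+}=\overline{C}$, which is exactly what your direct separation argument unpacks; it then shows $\overline{C}=C$ by the same observation you use, namely that $\operatorname{conv}(\mathcal{E}(K))$ is a compact base lying in the hyperplane $\omega(u)=1$ (you additionally justify compactness of $\mathcal{E}(K)$ via continuity of $w\mapsto\operatorname{ev}_w$, which the paper leaves implicit).
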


\begin{proof}
	Consider the cone generated by the evaluations, $C := \operatorname{cone}(\mathcal{E}(K)) \subseteq A^*$.
	A polynomial $p \in A$ belongs to $A^+$ if and only if $\operatorname{ev}_{(x,y)}(p) \ge 0$ for all $(x,y) \in K$, which holds if and only if $\omega(p) \ge 0$ for all $\omega \in \operatorname{cone}(\mathcal{E}(K))$. Therefore, identifying $A$ with $A^{**}$ via finite-dimensional reflexivity yields
	\[
	A^+ = \bigl\{ p \in A \bigm| \omega(p) \ge 0 \quad \forall \omega \in C \bigr\} = C^\circ,
	\]
	where $C^\circ$ denotes the polar dual of $C.$ By the bipolar theorem in finite dimensions, 
	\[
	A^{*+}= (A^+)^\circ = (C^\circ)^\circ = \overline{C} = \overline{\operatorname{cone}}\bigl(\mathcal{E}(K)\bigr),
	\]
	where $\overline{\operatorname{cone}}(\cdot)$ denotes the topological closure of the conic hull.
	
	Because $\omega(u) = 1$ for all $\omega \in \mathcal{E}(K)$, the set $\operatorname{conv}(\mathcal{E}(K))$ is a compact base of $C$ strictly separated from the origin ($0 \notin \operatorname{conv}(\mathcal{E}(K))$). In finite dimensions, the conical hull of any compact set not containing the origin is closed. Hence, $\overline{\operatorname{cone}}(\mathcal{E}(K)) = \operatorname{cone}(\mathcal{E}(K))$, yielding
	$
	A^{*+} = \operatorname{cone}\bigl(\mathcal{E}(K)\bigr).
	$
\end{proof}

\begin{proposition}\label{prop:fnsp}
	Let $K \subseteq \mathbb{R}^{n+m}$ be a compact non-degenerate set. Let $E_1 = \operatorname{Aff}(\pi_1(K))$ and $E_2 = \operatorname{Aff}(\pi_2(K))$ be the AOU spaces of continuous affine functions on the projections $\pi_1(K) \subset \mathbb{R}^n$ and $\pi_2(K) \subset \mathbb{R}^m$, equipped with units $u_1 = 1_{\pi_1(K)}$ and $u_2 = 1_{\pi_2(K)}$.
	
	Then the space of biaffine polynomials $\mathcal{P}_{\mathrm{biaff}}(K)$, equipped with the distinguished order unit $u = 1_K$ and the pointwise non-negative cone
	\[
	\mathcal{P}_{\mathrm{biaff}}(K)^+ = \bigl\{ p \in \mathcal{P}_{\mathrm{biaff}}(K) \bigm| p(x,y) \ge 0 \quad \forall (x,y) \in K \bigr\},
	\]
	is a biaffine order unit space of type $(n,m)$ over $(E_1, E_2)$.
\end{proposition}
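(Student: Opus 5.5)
We verify the AOU structure and then the three axioms of Definition~\ref{def:bimodule} in turn. First, $\PhiAff(K)$ is an AOU space with unit $1_K$: it is a finite-dimensional subspace of $\mathcal{C}(K)$ containing the constants, and the pointwise cone is proper because $p\ge0$ and $-p\ge 0$ on $K$ force $p|_K=0$. Every $p$ is bounded on the compact set $K$, so $-\|p\|_\infty 1_K\le p\le \|p\|_\infty 1_K$, and the Archimedean property is inherited from $\mathcal{C}(K)$. By Remark~\ref{rem:non-deg}, $\pi_1(K)$ and $\pi_2(K)$ affinely span $\R^n$ and $\R^m$. Hence restriction identifies $E_1=\operatorname{Aff}(\pi_1(K))$ with the affine functions on $\R^n$ (dimension $n+1$), and $E_2$ with the affine functions on $\R^m$ (dimension $m+1$). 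We read $E_1,E_2$ as the affine functions restricted to the compact projections with their pointwise cones, which are again AOU spaces.

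For Axiom (1), define the bilinear map $E_1\times E_2\to\PhiAff(K)$ by $(e,d)\mapsto \bigl((x,y)\mapsto e(x)d(y)\bigr)$. Its linearization $T:E_1\otimes E_2\to\PhiAff(K)$ sends the product basis $\{1,x_i\}\otimes\{1,y_j\}$ to the monomials $\{1,x_i,y_j,x_iy_j\}$. These monomials span $\PhiAff(K)$ by Lemma~\ref{lem:linearization} and are linearly independent on $K$ by Lemma~\ref{le:inj}, so $T$ is a linear isomorphism. Moreover $T(u_1\otimes u_2)=1_K=u$. We transport the cone along $T$.

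For Axiom (2), Lemma~\ref{le:cone} gives $A^{*+}=\operatorname{cone}(\mathcal{E}(K))$. Under $T$, each evaluation satisfies $\operatorname{ev}_{(x,y)}\circ T=\operatorname{ev}_x\otimes\operatorname{ev}_y$, because $(\operatorname{ev}_x\otimes \operatorname{ev}_y)(e\otimes d)=e(x)d(y)$. Here $\operatorname{ev}_x\in E_1^{*+}$ since $x\in\pi_1(K)$, and $\operatorname{ev}_y\in E_2^{*+}$ since $y\in \pi_2(K)$. So every evaluation is a rank-1 tensor lying in $A^{*+}\cap(E_1^{*+}\odot E_2^{*+})$. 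This gives
\[
A^{*+}=\operatorname{cone}(\mathcal{E}(K))\subseteq \operatorname{cone}\bigl(A^{*+}\cap(E_1^{*+}\odot E_2^{*+})\bigr)\subseteq A^{*+},
\]
so equality holds.

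For Axiom (3), note that $e\otimes u_2$ corresponds to the function $(x,y)\mapsto e(x)$. This function is nonnegative on $K$ if and only if $e\ge0$ on $\pi_1(K)$, since $x$ ranges exactly over $\pi_1(K)$ as $(x,y)$ ranges over $K$. That condition is precisely $e\in E_1^+$. The argument for $E_2$ is symmetric.

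The only delicate point is the bookkeeping in identifying $E_1\otimes E_2$ with $\PhiAff(K)$: $T$ must be an isomorphism, which is where non-degeneracy is used through Lemma~\ref{le:inj}, and the evaluation functionals must factor as $\operatorname{ev}_x\otimes\operatorname{ev}_y$ with positive factors. Once that identification is in place, the remaining steps are direct.
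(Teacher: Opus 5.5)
Your proposal is correct and follows essentially the same route as the paper: the tensor evaluation map $E_1\otimes E_2\to\mathcal{P}_{\mathrm{biaff}}(K)$ is an isomorphism by non-degeneracy, Axiom (2) follows from Lemma~\ref{le:cone} together with the factorization $\operatorname{ev}_{(x,y)}=\operatorname{ev}_x\otimes\operatorname{ev}_y$, and Axiom (3) uses the same projection argument. Your explicit check of properness, the order unit, and the Archimedean property is a small addition that the paper only asserts.
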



\begin{proof} 
	Let $A = \mathcal{P}_{\mathrm{biaff}}(K)$. Because $K \subseteq \mathbb{R}^{n+m}$ is non-degenerate of type $(n,m)$, its marginal projections $\pi_1(K) \subseteq \mathbb{R}^n$ and $\pi_2(K) \subseteq \mathbb{R}^m$ affinely span $\mathbb{R}^n$ and $\mathbb{R}^m$, respectively. Thus, the standard affine coordinate functions $\{1, x_1, \dots, x_n\}$ and $\{1, y_1, \dots, y_m\}$ are linearly independent on $\pi_1(K)$ and $\pi_2(K)$, yielding $\dim(E_1) = n+1$ and $\dim(E_2) = m+1$. Because $K$ is compact, its projections $\pi_1(K)$ and $\pi_2(K)$ are compact sets; equipped with the supremum norms over their respective domains, $E_1$ and $E_2$ are finite-dimensional AOU spaces with order units $u_1 = 1_{\pi_1(K)}$ and $u_2 = 1_{\pi_2(K)}$.
	
	\textit{Axiom (1):} Consider the linear evaluation map induced by the tensor product
	\[
	\Theta \colon E_1 \otimes E_2 \longrightarrow \mathcal{P}_{\mathrm{biaff}}(K), \qquad \Theta(f \otimes g)(x,y) := f(x)g(y).
	\]
	By definition of biaffine polynomials, every $p \in \mathcal{P}_{\mathrm{biaff}}(K)$ expands as 
	\[
	p(x,y) = \sum_{i=0}^n \sum_{j=0}^m c_{ij} x_i y_j = \langle c, \Phi(x,y)\rangle \qquad \text{for all } (x,y) \in K
	\]
	with $x_0=y_0=1$, which proves that $\Theta$ is surjective. Furthermore, because $K$ is non-degenerate, the lifted set spans the full space, $\operatorname{span}(\Phi(K)) = \mathbb{R}^{(n+1)(m+1)}$. Thus, if $\langle c, \Phi(w)\rangle = 0$ for all $w \in K$, then $c = 0$, establishing that $\Theta$ is injective. 
	
	Hence, $\Theta$ is a linear isomorphism $E_1 \otimes E_2 \cong \mathcal{P}_{\mathrm{biaff}}(K)$, which identifies the distinguished order unit as
	\[
	u = (u_1 \otimes u_2)|_K = (1_{\pi_1(K)} \otimes 1_{\pi_2(K)})|_K = 1_K.
	\]
	Because $K$ is compact, $A = \mathcal{P}_{\mathrm{biaff}}(K)$ equipped with the pointwise non-negative cone $A^+ = \{p \in A \mid p(w) \ge 0 \text{ for all } w \in K\}$ and the supremum norm $\|p\|_\infty = \sup_{w \in K}|p(w)|$ is a real Archimedean order unit space of dimension $D = (n+1)(m+1)$.


	\textit{Axiom (2):} Note that for every $(x,y) \in K$, the point-evaluation functional $\operatorname{ev}_{(x,y)} \colon p \mapsto p(x,y)$ on $A$ factors as  rank-1 evaluation
	\[
	\operatorname{ev}_{(x,y)} = \operatorname{ev}_x \otimes \operatorname{ev}_y \in S(E_1) \odot S(E_2) \subseteq E_1^{*+} \odot E_2^{*+}.
	\]
	Set $\mathcal{E}(K) := \{\operatorname{ev}_x \otimes \operatorname{ev}_y \mid (x,y) \in K\}$. By definition of the cone $A^+$, an element $p \in A$ belongs to $A^+$ if and only if $\omega(p) \ge 0$ for all $\omega \in \mathcal{E}(K)$.

	Since $\mathcal{E}(K) \subseteq A^{*+} \cap (E_1^{*+} \odot E_2^{*+}) \subseteq A^{*+}$, we have
	\[
	A^{*+} = \operatorname{cone}\bigl(\mathcal{E}(K)\bigr) \subseteq \operatorname{cone}\Bigl( A^{*+} \cap \bigl(E_1^{*+} \odot E_2^{*+}\bigr) \Bigr) \subseteq A^{*+},
	\]
	where the first equality follows from Lemma \ref{le:cone}.
	Hence, $A^{*+} = \operatorname{cone}\bigl( A^{*+} \cap (E_1^{*+} \odot E_2^{*+}) \bigr)$, which confirms that $A^{*+}$ is generated by its rank-1  tensors.
	
	\textit{Axiom (3):} Let $e \in E_1 = \operatorname{Aff}(\pi_1(K))$. If $e \in E_1^+$, then $e(x) \ge 0$ for all $x \in \pi_1(K)$. Since $u_2 = 1_{\pi_2(K)}$, for every $(x,y) \in K \subseteq \pi_1(K) \times \pi_2(K)$ we have
	\[
	(e \otimes u_2)(x,y) = e(x)u_2(y) = e(x) \ge 0.
	\]
	Hence $e \otimes u_2 \in \mathcal{P}_{\mathrm{biaff}}(K)^+ = A^+$, proving $E_1^+ \subseteq \{e \in E_1 \mid e \otimes u_2 \in A^+\}$. 
	
	Conversely, suppose $e \in E_1$ satisfies $e \otimes u_2 \in A^+$. Then $(e \otimes u_2)(x,y) = e(x) \ge 0$ for all $(x,y) \in K$. By definition of the coordinate projection $\pi_1(K)$, for every $x \in \pi_1(K)$ there exists some $y \in \mathbb{R}^m$ such that $(x,y) \in K$. Evaluating at this pair yields $e(x) \ge 0$. Because this holds for every $x \in \pi_1(K)$, we have $e \in E_1^+$, establishing
	\[
	E_1^+ = \{e \in E_1 \mid e \otimes u_2 \in A^+\}.
	\]
	An identical argument with $u_1 = 1_{\pi_1(K)}$ and $d \in E_2 = \operatorname{Aff}(\pi_2(K))$ shows that $E_2^+ = \{d \in E_2 \mid u_1 \otimes d \in A^+\}$, completing the proof.
\end{proof}

\begin{remark}
	The proof of Proposition \ref{prop:fnsp} requires only the compactness and non-degeneracy of $K$, without using biaffine convexity. This parallels classical Kadison duality, where the space of affine functions on any compact set forms an Archimedean order unit space. Biaffine convexity of $K$ is required only in the forthcoming duality theorem (Theorem \ref{thm:dual}) to ensure the round-trip identification $K \cong K_{\mathcal{P}_{\mathrm{biaff}}(K)}$; for a general compact set, the state space reconstruction yields its biaffine hull $\mathrm{biconv}(K)$ rather than $K$ itself.
\end{remark}

\begin{definition}
	\label{def:biaff_iso}
	Let $(A, u_A, A^+)$ and $(B, u_B, B^+)$ be two biaffine order unit spaces of type $(n,m)$ over the factor AOU pairs $(E_1, E_2)$ and $(F_1, F_2)$, respectively, with order units $u_A = u_{E_1} \otimes u_{E_2}$ and $u_B = u_{F_1} \otimes u_{F_2}$.
	A linear map $\Phi \colon A \to B$ is an \textbf{isomorphism of biaffine order unit spaces} if
	\begin{enumerate}
		\item  $\Phi$ is a unital order isomorphism between the global AOU spaces, i.e., $\Phi$ is a linear bijection satisfying
		\[
		\Phi(u_A) = u_B \qquad \text{and} \qquad \Phi(A^+) = B^+.
		\]
		
		\item $\Phi$ respects the factor structure, meaning there exist unital order isomorphisms $\Phi_1 \colon E_1 \to F_1$ and $\Phi_2 \colon E_2 \to F_2$ such that
		$
		\Phi_1(u_{E_1}) = u_{F_1}, \Phi_2(u_{E_2}) = u_{F_2},
		$
		and
		\[
		\Phi = \Phi_1 \otimes \Phi_2.
		\]
	\end{enumerate}
\end{definition}

\subsection{The coordinate biaffine state space}

In classical Kadison duality, a compact convex set is recovered as the state space $S(A)$ of its function system. For an abstract biaffine order unit space $A \cong E_1 \otimes E_2$, however, the full state space $S(A)$ is convex, corresponding to the lifted convex hull $\operatorname{conv}(\Phi(K))$ rather than the set $K \subseteq \mathbb{R}^n \times \mathbb{R}^m$ itself. To reconstruct the original geometric set, one must isolate the unentangled product states $\phi \otimes \psi$, which represent genuine point evaluations $(x,y)$. By choosing affine coordinate bases for $E_1$ and $E_2$, these product states map back into $\mathbb{R}^{n+m}$, recovering $K$. 

\begin{definition} \label{def:statesp}
	Let $(E_1, u_1, E_1^+)$ and $(E_2, u_2, E_2^+)$ be finite-dimensional AOU spaces with state spaces $S(E_1) \subseteq E_1^*$ and $S(E_2) \subseteq E_2^*$. Let $(A, u, A^+)$ be an AOU space factoring linearly as $A \cong E_1 \otimes E_2$ with $u = u_1 \otimes u_2$.
	\begin{enumerate}
		
		\item The \textbf{biaffine state space} of $A$, denoted $S_{\mathrm{biaff}}(A)$, is the subset of product states
		\[
		S_{\mathrm{biaff}}(A) := S(A) \cap \bigl(S(E_1) \odot S(E_2)\bigr) = \bigl\{ \phi \otimes \psi \in S(E_1) \odot S(E_2) \bigm| (\phi \otimes \psi)(a) \ge 0 \;\; \forall a \in A^+ \bigr\}.
		\]
		
		\item 
		Given designated bases $\mathcal{B} = (\{e_i\}_{i=0}^n, \{d_j\}_{j=0}^m)$ of $E_1$ and $E_2$ (with $e_0 = u_1, d_0 = u_2$), the \textbf{coordinate biaffine state space} $K_{A, \mathcal{B}} \subseteq \mathbb{R}^{n+m}$ (often abbreviated as $K_A$ when the choice of basis is fixed) is the image of $S_{\text{biaff}}(A)$ under the 
		coordinate embedding $\iota \colon S(E_1) \odot S(E_2) \to \mathbb{R}^{n+m}$ given by
		\[
		\iota(\phi \otimes \psi) := \bigl(\phi(e_1), \dots, \phi(e_n), \psi(d_1), \dots, \psi(d_m)\bigr).
		\]
	\end{enumerate}
\end{definition}


\begin{lemma}
	\label{lem:intrinsic_dual_equiv}
	Let $(E_1, u_1, E_1^+)$ and $(E_2, u_2, E_2^+)$ be AOU spaces of dimensions $n+1$ and $m+1$, and let $(A, u, A^+)$ be an AOU space with $A \cong E_1 \otimes E_2$ and $u = u_1 \otimes u_2$. Then
	\[
	E_1^+ \otimes_\epsilon E_2^+ \subseteq A^+ \iff S(A) \subseteq \operatorname{conv}\bigl(S(E_1) \odot S(E_2)\bigr).
	\]
\end{lemma}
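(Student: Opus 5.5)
The plan is to argue by cone duality in the finite-dimensional space $A^* \cong E_1^* \otimes E_2^*$. Introduce the auxiliary cone $P := \operatorname{cone}\bigl(E_1^{*+} \odot E_2^{*+}\bigr)$. By the definition of the injective tensor cone, $E_1^+ \otimes_\epsilon E_2^+$ is precisely the dual cone of $P$ in $A = A^{**}$. Hence the left-hand inclusion $E_1^+ \otimes_\epsilon E_2^+ \subseteq A^+$ is equivalent, via the bipolar theorem, to the reverse inclusion of duals $A^{*+} \subseteq \overline{P}$. First I would check that $P$ is closed. The set $S(E_1) \odot S(E_2)$ is compact, being the continuous image of the compact set $S(E_1) \times S(E_2)$ under the bilinear map $(\phi,\psi) \mapsto \phi \otimes \psi$. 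It also lies in the affine hyperplane $\{\omega \mid \omega(u_1 \otimes u_2) = 1\}$. Every nonzero positive functional on $E_i$ is a positive multiple of a state, because $u_i$ is an order unit. It follows that $P = \operatorname{cone}\bigl(\operatorname{conv}(S(E_1)\odot S(E_2))\bigr)$, and, exactly as in the proof of Lemma~\ref{le:cone}, the cone over a compact convex base not containing $0$ is closed. This gives $A^+ \supseteq E_1^+\otimes_\epsilon E_2^+ \iff A^{*+} \subseteq P$.

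Next I would translate between cones and their bases. For ($\Rightarrow$), take $f \in S(A) \subseteq A^{*+} \subseteq P$ and write $f = \sum_k \lambda_k \phi_k \otimes \psi_k$ with $\lambda_k \ge 0$ and states $\phi_k, \psi_k$. Evaluating at $u = u_1 \otimes u_2$ gives $\sum_k \lambda_k = f(u) = 1$, so $f \in \operatorname{conv}(S(E_1)\odot S(E_2))$. For ($\Leftarrow$), since $A$ is an AOU space, every nonzero $f \in A^{*+}$ satisfies $f(u) > 0$. If $f(u)=0$ then $f$ vanishes on the whole space, because $-\|a\|u \le a \le \|a\|u$. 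So $f/f(u) \in S(A) \subseteq \operatorname{conv}(S(E_1)\odot S(E_2)) \subseteq P$, and therefore $A^{*+} \subseteq P$. Dualizing once more, with $A$ finite-dimensional and $A^+$ closed by the Archimedean property, yields $P^\circ \subseteq A^{+}$, that is, $E_1^+ \otimes_\epsilon E_2^+ \subseteq A^+$.

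I expect the main obstacle to be the duality bookkeeping. The identification $(E_1^+ \otimes_\epsilon E_2^+) = P^\circ$ is immediate. Recovering $A^+ = (A^{*+})^\circ$, however, requires $A^+$ to be closed, which is where the Archimedean hypothesis enters. Closedness of $P$ also needs the compactness and normalization argument above; without it, the bipolar theorem would only give $A^{*+} \subseteq \overline{P}$, and the convex-hull statement could fail.
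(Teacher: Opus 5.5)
Your proof is correct and follows essentially the same route as the paper. Both arguments identify the injective cone as the dual of the closed cone $\operatorname{cone}(S(E_1)\odot S(E_2))$, which is closed because it has a compact base in the hyperplane $\omega(u_1\otimes u_2)=1$; both then dualize via the bipolar theorem to the inclusion $A^{*+}\subseteq \operatorname{cone}(S(E_1)\odot S(E_2))$ and pass to bases by normalizing at $u=u_1\otimes u_2$. Your explicit check that every nonzero positive functional satisfies $f(u)>0$ (so that $A^{*+}=\operatorname{cone}(S(A))$) makes precise the step the paper compresses into ``intersecting both sides with $H$.''
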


\begin{proof} Since $E_1^+ \otimes_\epsilon E_2^+$ and $A^+$ are closed (convex) cones, the bipolar theorem yields
	$$E_1^+ \otimes_\epsilon E_2^+ \subseteq A^+ \iff (A^+)^\circ \subseteq (E_1^+ \otimes_\epsilon E_2^+)^\circ.$$
	
	We claim that
	\[
	E_1^+ \otimes_\epsilon E_2^+ = \bigl\{ a \in E_1 \otimes E_2 \bigm| \omega(a) \ge 0 \quad \forall \omega \in S(E_1) \odot S(E_2) \bigr\}.
	\]
	Let $a \in E_1 \otimes E_2$. 
	If $(\phi \otimes \psi)(a) \ge 0$ for all $\phi \in E_1^{*+}$ and $\psi \in E_2^{*+}$, then in particular $(\phi \otimes \psi)(a) \ge 0$ for all normalized states $\phi \in S(E_1) \subset E_1^{*+}$ and $\psi \in S(E_2) \subset E_2^{*+}$.
	Conversely, suppose $(\tilde{\phi} \otimes \tilde{\psi})(a) \ge 0$ for all $\tilde{\phi} \in S(E_1)$ and $\tilde{\psi} \in S(E_2)$. Let $\phi \in E_1^{*+}$ and $\psi \in E_2^{*+}$ be arbitrary. If either $\phi = 0$ or $\psi = 0$, then $(\phi \otimes \psi)(a) = 0 \ge 0$. If both are non-zero, then $\phi(u_1) > 0$ and $\psi(u_2) > 0$. Factoring out the positive normalizations,
	\[
	\phi = \lambda \tilde{\phi} \quad \text{with } \lambda = \phi(u_1) > 0, \; \tilde{\phi} = \frac{\phi}{\phi(u_1)} \in S(E_1),
	\]
	\[
	\psi = \mu \tilde{\psi} \quad \text{with } \mu = \psi(u_2) > 0, \; \tilde{\psi} = \frac{\psi}{\psi(u_2)} \in S(E_2).
	\]
	By bilinearity of the tensor product,
	\[
	(\phi \otimes \psi)(a) = \lambda \mu \cdot (\tilde{\phi} \otimes \tilde{\psi})(a) \ge 0,
	\]
	since $\lambda \mu > 0$ and $(\tilde{\phi} \otimes \tilde{\psi})(a) \ge 0$. This proves the claim.

	Next, let $S = S(E_1) \odot S(E_2) \subseteq (E_1 \otimes E_2)^*$. It is easy to see that 
	\[
	\bigl(\operatorname{cone}(S)\bigr)^\circ := \bigl\{ a \in E_1 \otimes E_2 \bigm| \omega(a) \ge 0 \quad \forall \omega \in \operatorname{cone}(S) \bigr\} = \{a \in E_1 \otimes E_2 \mid \omega(a) \ge 0 \ \forall \omega \in S\}.
	\]
	Hence,
	\[
	E_1^+ \otimes_\epsilon E_2^+ = \bigl\{ a \in E_1 \otimes E_2 \bigm| \omega(a) \ge 0 \quad \forall \omega \in S(E_1) \odot S(E_2) \bigr\} = \bigl(\operatorname{cone}(S(E_1) \odot S(E_2))\bigr)^\circ.
	\]

	Since $S(E_1)$ and $S(E_2)$ are compact convex sets in finite dimensions, the set $S(E_1) \odot S(E_2)$ is compact and contained in the affine hyperplane $H = \{\omega \in A^* \mid \omega(u_1 \otimes u_2) = 1\}$. Because $0 \notin H$, the cone $C = \operatorname{cone}(S(E_1) \odot S(E_2))$ is closed. Applying the bipolar theorem in finite dimensions gives
	\[
	(E_1^+ \otimes_\epsilon E_2^+)^\circ = (C^\circ)^\circ = C = \operatorname{cone}\bigl(S(E_1) \odot S(E_2)\bigr).
	\]
	Another use of the bipolar theorem yields
	\[
	E_1^+ \otimes_\epsilon E_2^+ \subseteq A^+ \iff A^{*+} = A^{+\circ} \subseteq (E_1^+ \otimes_\epsilon E_2^+)^\circ = \operatorname{cone}\bigl(S(E_1) \odot S(E_2)\bigr) .
	\]
	Intersecting both sides with the affine hyperplane $H$ selects the state space bases
	\[
	A^{*+} \cap H = S(A) \qquad \text{and} \qquad \operatorname{cone}\bigl(S(E_1) \odot S(E_2)\bigr) \cap H = \operatorname{conv}\bigl(S(E_1) \odot S(E_2)\bigr).
	\]
	Therefore, $E_1^+ \otimes_\epsilon E_2^+ \subseteq A^+ \iff S(A) \subseteq \operatorname{conv}(S(E_1) \odot S(E_2))$.
\end{proof}

\begin{lemma}\label{le:extp}
	Let $(E_1, u_1, E_1^+)$ and $(E_2, u_2, E_2^+)$ be AOU spaces of dimensions $n+1$ and $m+1$, and let $(A, u, A^+)$ be an AOU space with $A \cong E_1 \otimes E_2$ and $u = u_1 \otimes u_2$. Then the following are equivalent:
	\begin{enumerate}[(i)]
		\item The positive dual cone is generated by its rank-$1$  tensors,
		\[
		A^{*+} = \operatorname{cone}\Bigl( A^{*+} \cap \bigl(E_1^{*+} \odot E_2^{*+}\bigr) \Bigr).
		\]
		\item Every extreme state of $A$ is a rank-$1$ product state,
		\[
		\operatorname{ext}(S(A)) \subseteq S(E_1) \odot S(E_2) \quad \bigl(\text{hence } \operatorname{ext}(S(A)) \subseteq S_{\mathrm{biaff}}(A)\bigr).
		\]
		\item The full state space is the convex hull of the biaffine state space,
		\[
		S(A) = \operatorname{conv}\bigl(S_{\mathrm{biaff}}(A)\bigr).
		\]
		\item Positivity in $A$ is completely determined on rank-$1$ states,
		\[
		a \in A^+ \iff (\phi \otimes \psi)(a) \ge 0 \quad \forall \phi \otimes \psi \in S_{\mathrm{biaff}}(A).
		\]
	\end{enumerate}
\end{lemma}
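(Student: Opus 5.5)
The plan is to prove the cycle (i) $\Rightarrow$ (iii) $\Rightarrow$ (ii) $\Rightarrow$ (i), and then attach (iv) via (iii) $\Leftrightarrow$ (iv). Everything rests on one normalization observation. Let $H = \{\omega \in A^* \mid \omega(u) = 1\}$. Since $(A,u,A^+)$ is a finite-dimensional AOU space, $S(A) = A^{*+} \cap H$ is a compact convex base of $A^{*+}$, and every nonzero $\omega \in A^{*+}$ satisfies $\omega(u) > 0$. Moreover, any nonzero $\phi \otimes \psi \in A^{*+} \cap (E_1^{*+} \odot E_2^{*+})$ has $\phi(u_1)\psi(u_2) = (\phi\otimes\psi)(u) > 0$. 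With both factors positive and nonzero, we get $\phi(u_1), \psi(u_2) > 0$. Rescaling as in the proof of Lemma~\ref{lem:intrinsic_dual_equiv} writes it as a positive multiple of $\tilde\phi \otimes \tilde\psi$ with $\tilde\phi \in S(E_1)$ and $\tilde\psi \in S(E_2)$, and $\tilde\phi\otimes\tilde\psi$ still lies in $A^{*+}$. Hence
\[
\operatorname{cone}\bigl(A^{*+} \cap (E_1^{*+} \odot E_2^{*+})\bigr) = \operatorname{cone}\bigl(S_{\mathrm{biaff}}(A)\bigr).
\]

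For (i) $\Rightarrow$ (iii), take $\omega \in S(A)$. By (i) and the identity above, $\omega = \sum_k \lambda_k \sigma_k$ with $\lambda_k \ge 0$ and $\sigma_k \in S_{\mathrm{biaff}}(A)$. Evaluating at $u$ gives $\sum_k \lambda_k = 1$, so $\omega \in \operatorname{conv}(S_{\mathrm{biaff}}(A))$. The reverse inclusion holds because $S(A)$ is convex.

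For (iii) $\Rightarrow$ (ii), first check that $S_{\mathrm{biaff}}(A)$ is compact: it is the intersection of the closed set $S(A)$ with the continuous image of the compact set $S(E_1)\times S(E_2)$ under $(\phi,\psi)\mapsto\phi\otimes\psi$. Then Milman's converse of Krein--Milman (or, in finite dimensions, the elementary fact that an extreme point of the convex hull of a compact set lies in that set) gives $\operatorname{ext}(S(A)) \subseteq S_{\mathrm{biaff}}(A) \subseteq S(E_1)\odot S(E_2)$.

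For (ii) $\Rightarrow$ (i), apply Minkowski/Krein--Milman to the compact convex set $S(A)$ to get $S(A) = \operatorname{conv}(\operatorname{ext} S(A))$. An extreme state lying in $S(E_1)\odot S(E_2)$ automatically lies in $S_{\mathrm{biaff}}(A)$. Thus $A^{*+} = \operatorname{cone}(S(A)) \subseteq \operatorname{cone}(S_{\mathrm{biaff}}(A))$, and the reverse inclusion is trivial.

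Finally, for (iii) $\Leftrightarrow$ (iv): the forward direction of (iv) holds trivially. Given (iii), if $(\phi\otimes\psi)(a)\ge 0$ for all product states in $S_{\mathrm{biaff}}(A)$, then by linearity $\omega(a)\ge0$ for all $\omega\in S(A)$, hence for all of $A^{*+}$. Since $A^+$ is closed (Archimedean), the bipolar theorem gives $a\in A^+$. Conversely, assume (iv). Then $A^+ = (\operatorname{cone} S_{\mathrm{biaff}}(A))^\circ$. The cone $\operatorname{cone}(S_{\mathrm{biaff}}(A))$ has the compact base $\operatorname{conv}(S_{\mathrm{biaff}}(A))\subseteq H$, which avoids $0$, so it is closed, exactly as in Lemma~\ref{le:cone}. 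The bipolar theorem then gives $A^{*+} = \operatorname{cone}(S_{\mathrm{biaff}}(A))$, which is (i) by the normalization identity.

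The main obstacle is this last step, (iv) $\Rightarrow$ (i): one has to know that the generated cone is closed before the bipolar theorem applies. That closedness in turn depends on the compactness of $S_{\mathrm{biaff}}(A)$ and on its separation from the origin, which is where the normalization $\phi(u_1),\psi(u_2)>0$ is essential.
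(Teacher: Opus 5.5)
Your proof is correct. It uses the same ingredients as the paper's: normalizing positive rank-one functionals via $\phi(u_1),\psi(u_2)>0$, Minkowski's theorem for $S(A)$, compactness of $S_{\mathrm{biaff}}(A)$, and a duality argument for (iv). You arrange them differently, though.

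The paper runs the cycle (i)$\Rightarrow$(ii)$\Rightarrow$(iii)$\Rightarrow$(i). It obtains (ii) from (i) by noting that extreme states span extreme rays of $A^{*+}$, which must then be positive multiples of rank-one generators. You instead prove the identity $\operatorname{cone}\bigl(A^{*+}\cap(E_1^{*+}\odot E_2^{*+})\bigr)=\operatorname{cone}(S_{\mathrm{biaff}}(A))$ once, so that (i)$\Leftrightarrow$(iii) reduces to evaluating at $u$. You then get (ii) from (iii) from the elementary fact that extreme points of a convex hull lie in the generating set. This sidesteps the tacit step in the paper that extreme rays of a generated cone are spanned by generators. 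Since (iii) uses the algebraic convex hull, it also needs no compactness at that point.

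For (iv), the paper proves (iv)$\Rightarrow$(iii) by strictly separating a hypothetical state outside $\operatorname{conv}(S_{\mathrm{biaff}}(A))$ and shifting by $\alpha u$. Your bipolar argument for (iv)$\Rightarrow$(i) gets closedness of $\operatorname{cone}(S_{\mathrm{biaff}}(A))$ from its compact base in $H$. It is the dual form of the same separation and reuses the mechanism of Lemma~\ref{le:cone}.

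One small correction to your closing remark. The base avoids the origin simply because states satisfy $\omega(u)=1$. The normalization $\phi(u_1),\psi(u_2)>0$ is needed only for the identity that converts $\operatorname{cone}(S_{\mathrm{biaff}}(A))$ back into the cone appearing in (i).
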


\begin{proof}
	\textit{(i) $\Rightarrow$ (ii):} Since $S(A) = A^{*+} \cap \{\omega \in A^* \mid \omega(u) = 1\}$, the state space $S(A)$ is a compact convex base of the pointed cone $A^{*+}$. Therefore, every extreme point of $S(A)$ generates an extreme ray of $A^{*+}$. Under (i), $A^{*+} = \operatorname{cone}\bigl(A^{*+} \cap (E_1^{*+} \odot E_2^{*+})\bigr)$, which implies that all extreme rays of $A^{*+}$ reside in $E_1^{*+} \odot E_2^{*+}$. Any non-zero functional $\omega \in E_1^{*+} \odot E_2^{*+}$ factors as $\omega = \phi \otimes \psi$ with non-zero $\phi \in E_1^{*+}$ and $\psi \in E_2^{*+}$, so $\phi(u_1) > 0$ and $\psi(u_2) > 0$. 
	Normalizing we get
	\[
	\tilde{\omega} = \left(\frac{\phi}{\phi(u_1)}\right) \otimes \left(\frac{\psi}{\psi(u_2)}\right) \in S(E_1) \odot S(E_2).
	\]
	Thus $\operatorname{ext}(S(A)) \subseteq S(E_1) \odot S(E_2)$. Because $\operatorname{ext}(S(A)) \subseteq S(A)$, we have
	\[
	\operatorname{ext}(S(A)) \subseteq S(A) \cap \bigl(S(E_1) \odot S(E_2)\bigr) = S_{\mathrm{biaff}}(A).
	\]
	
	\textit{(ii) $\Rightarrow$ (iii):} Since $S(A)$ is a compact convex set, Minkowski's theorem and \textit{(ii)} give
	\[
	S(A) = \operatorname{conv}(\operatorname{ext}(S(A))) \subseteq \operatorname{conv}(S_{\mathrm{biaff}}(A)) \subseteq S(A),
	\]
	which forces $S(A) = \operatorname{conv}(S_{\mathrm{biaff}}(A))$.
	
	\textit{(iii) $\Rightarrow$ (i):} Because $S(A)$ is a base of $A^{*+}$, we have $A^{*+} = \operatorname{cone}(S(A))$. Substituting $S(A) = \operatorname{conv}(S_{\mathrm{biaff}}(A))$ yields
	\[
	A^{*+} = \operatorname{cone}\bigl(\operatorname{conv}(S_{\mathrm{biaff}}(A))\bigr) = \operatorname{cone}(S_{\mathrm{biaff}}(A)).
	\]
	Since $S_{\mathrm{biaff}}(A) \subseteq A^{*+} \cap (E_1^{*+} \odot E_2^{*+}) \subseteq A^{*+}$, taking conic hulls yields
	\[
	A^{*+} = \operatorname{cone}\Bigl(A^{*+} \cap \bigl(E_1^{*+} \odot E_2^{*+}\bigr)\Bigr),
	\]
	establishing \textit{(i)} and closing the equivalence $(i) \Leftrightarrow (ii) \Leftrightarrow (iii)$.
	
	\textit{(iii) $\Rightarrow$ (iv):} In any Archimedean order unit space, positivity is detected by evaluation against states,
	\[
	a \in A^+ \iff \omega(a) \ge 0 \quad \forall \omega \in S(A).
	\]
	Under \textit{(iii)}, every state $\omega \in S(A)$ is a finite convex combination $\omega = \sum_{k=1}^N \lambda_k \omega_k$ of elements $\omega_k \in S_{\mathrm{biaff}}(A)$ with $\lambda_k \ge 0$ and $\sum_{k=1}^N \lambda_k = 1$. By linearity, $\omega(a) \ge 0$ for all $\omega \in S(A)$ if and only if $\omega(a) \ge 0$ for all $\omega \in S_{\mathrm{biaff}}(A)$.
	
	\textit{(iv) $\Rightarrow$ (iii):}  Since $S_{\mathrm{biaff}}(A) \subseteq S(A)$ and $S(A)$ is convex, the inclusion $\operatorname{conv}(S_{\mathrm{biaff}}(A)) \subseteq S(A)$ is immediate. For the reverse inclusion, note that $S(E_1)$ and $S(E_2)$ are compact in finite dimensions, making $S(E_1) \odot S(E_2)$ compact in $A^*$. Because $S(A)$ is closed, $S_{\mathrm{biaff}}(A) = S(A) \cap (S(E_1) \odot S(E_2))$ is compact, so $C := \operatorname{conv}(S_{\mathrm{biaff}}(A))$ is a compact convex subset of $S(A)$.
	
	Suppose for contradiction that there exists a state $\omega_0 \in S(A) \setminus C$. By the Hahn-Banach separation theorem in $A^*$, there exist $a \in A$ and $\alpha \in \mathbb{R}$ such that
	\begin{equation}\label{eq:sep1}
		\omega(a) \ge \alpha \quad \forall \omega \in C, \qquad \text{and} \qquad \omega_0(a) < \alpha.
	\end{equation}
	Define the shifted element $\tilde{a} := a - \alpha u \in A$. Since every functional $\omega \in C \subseteq S(A)$ satisfies $\omega(u) = 1$, we have
	\[
	\omega(\tilde{a}) = \omega(a) - \alpha \ge 0 \quad \forall \omega \in S_{\mathrm{biaff}}(A) \subseteq C.
	\]
	By \textit{(iv)}, this  implies $\tilde{a} \in A^+$. However, since $\omega_0 \in S(A) \subset A^{*+}$, it must evaluate non-negatively on all elements of $A^+.$ Now
	\[
	\omega_0(\tilde{a}) = \omega_0(a) - \alpha < 0
	\]
	from \eqref{eq:sep1} contradicts $\tilde{a} \in A^+$. Thus no such state $\omega_0$ can exist, forcing $S(A) \subseteq \operatorname{conv}(S_{\mathrm{biaff}}(A))$ and concluding that $S(A) = \operatorname{conv}(S_{\mathrm{biaff}}(A))$.
\end{proof}

\begin{remark}
	Lemma~\ref{le:extp} establishes the algebraic bridge ensuring that an abstract biaffine order unit space $A$ behaves like a space of functions on a geometric set of coordinate pairs $(x,y) \in \mathbb{R}^{n+m}$. In a general tensor product $E_1 \otimes E_2$, the dual space can contain higher-rank (or ``entangled'') extreme states that do not correspond to any evaluation at a geometric point. The equivalent conditions of Lemma~\ref{le:extp} rule out such non-geometric behavior.
	
	Condition \textit{(ii)} says that every extreme state of $S(A)$ is an unentangled rank-$1$ product state, guaranteeing that the extreme boundary consists entirely of coordinate points.
	By condition \textit{(iii)}, the full state space contains no non-separable states protruding beyond the convex hull of classical product states, so $S(A) = \operatorname{conv}(S_{\mathrm{biaff}}(A))$.
	Finally, condition \textit{(iv)} reflects the concrete function-theoretic principle that positivity of an element $a \in A$ is detected purely pointwise on geometric pairs $(x,y)$, without needing to test against non-geometric states.
\end{remark}

\begin{remark}\label{rem:namioka_phelps}
	The inclusion $E_1^+ \otimes_{\epsilon} E_2^+ \subseteq A^+$ from Remark \ref{re:ax2} connects our setting to the classical theory of ordered tensor products by Namioka and Phelps \cite{NP69}. For ordered vector 
	spaces with order units, the minimal and maximal compatible tensor cones are the projective 
	cone $E_1^+ \otimes_\pi E_2^+ = \operatorname{cone}(E_1^+ \odot E_2^+)$ and the injective cone 
	\[
	E_{1}^{+}\otimes_{\epsilon}E_{2}^{+} = \left\{ a \in E_1 \otimes E_2 \;\middle|\; (\phi \otimes \psi)(a) \ge 0 \; \forall \phi \in E_1^{*+}, \, \psi \in E_2^{*+} \right\}.
	\]
	In \cite{NP69}, Namioka and Phelps proved that the state space of the injective tensor product 
	$E_1 \otimes_\epsilon E_2$ is precisely the convex hull of the product of the state spaces,
	\[
	S(E_1 \otimes_\epsilon E_2) = \operatorname{conv}(S(E_1) \odot S(E_2)).
	\]
	In quantum information and convex geometry, this is the set of all separable states.
	Geometrically, $E_1 \otimes_\epsilon E_2$ corresponds to the full Cartesian product  $\pi_1(K) \times \pi_2(K)$. By Remark \ref{re:ax2}, any biaffine order unit space $A$ satisfies
	\[
	E_1^+ \otimes_\epsilon E_2^+ \subseteq A^+,
	\]
	which by Lemma \ref{lem:intrinsic_dual_equiv} means that the state space $S(A) = \operatorname{conv}(S_{\mathrm{biaff}}(A))$ is a 
	closed convex subset of the separable state space $\operatorname{conv}(S(E_1) \odot S(E_2))$. When $K = \pi_1(K) \times \pi_2(K)$ is a product domain, 
	$A$ coincides with the injective tensor product $E_1 \otimes_\epsilon E_2$. 
	For a general non-product biaffinely convex set $K \subsetneq \pi_1(K) \times \pi_2(K)$, $A^+$ 
	is strictly larger than $E_1^+ \otimes_\epsilon E_2^+$, reflecting that polynomials need only 
	be non-negative on $K$ rather than the ambient product box. 
\end{remark}

\begin{lemma}\label{le:lambda}
	Let $(A, u, A^+)$ be a biaffine order unit space of type $(n,m)$ over $(E_1, E_2)$ with designated bases $\{e_i\}_{i=0}^n$ and $\{d_j\}_{j=0}^m$ satisfying $e_0 = u_1$ and $d_0 = u_2$. Let $H_1 = \{\phi \in E_1^* \mid \phi(u_1) = 1\}$ and $H_2 = \{\psi \in E_2^* \mid \psi(u_2) = 1\}$. 
	
	Define the linear coordinate map $\Lambda: A^* \to \mathbb{R}^{(n+1)(m+1)}$ by $\Lambda(\omega) := (\omega(e_i \otimes d_j))_{i=0,j=0}^{n,m}$. Then, with the coordinate map $\iota(\phi \otimes \psi) := ((\phi(e_i))_{i=1}^n, (\psi(d_j))_{j=1}^m)$ and the biaffine embedding $\Phi(x,y) := \binom{1}{x} \otimes \binom{1}{y},$
	\begin{enumerate}[(a)]
		\item $\Lambda$ is a linear isomorphism satisfying $\Lambda(\phi \otimes \psi) = \Phi(\iota(\phi \otimes \psi))$ on $H_1 \odot H_2$, and hence $\Lambda(H_1 \odot H_2) = \Sigma_{n,m}$,
		\item $S(A) \cap (H_1 \odot H_2) = S_{\mathrm{biaff}}(A)$,
		\item $\Lambda(S(A)) \cap \Sigma_{n,m} = \Phi(K_A)$.
	\end{enumerate}
\end{lemma}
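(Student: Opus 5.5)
The plan is to treat the three parts in order, each resting on the previous one, and to work entirely with the dual bases. Let $\{e_i^*\}$ and $\{d_j^*\}$ be the dual bases of $E_1^*$ and $E_2^*$. Then $\{e_i^*\otimes d_j^*\}$ is a basis of $A^* = E_1^*\otimes E_2^*$.

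For (a), note that $\Lambda$ sends $\omega$ to its coordinate vector in the basis $\{e_i^*\otimes d_j^*\}$. It is therefore a linear bijection between spaces of equal dimension $(n+1)(m+1)$. Now take $\phi\in H_1$ and $\psi\in H_2$. Then
\[
(\phi\otimes\psi)(e_i\otimes d_j)=\phi(e_i)\psi(d_j),
\]
with $\phi(e_0)=\phi(u_1)=1$ and $\psi(d_0)=1$. Put $x=(\phi(e_i))_{i=1}^n$ and $y=(\psi(d_j))_{j=1}^m$. Then $\Lambda(\phi\otimes\psi)=\binom{1}{x}\otimes\binom{1}{y}=\Phi(x,y)=\Phi(\iota(\phi\otimes\psi))$. (Here $\iota$ is extended to all of $H_1\odot H_2$ by the same formula.) This gives $\Lambda(H_1\odot H_2)\subseteq\Sigma_{n,m}$. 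For the reverse inclusion, given $(x,y)\in\mathbb{R}^{n+m}$, define $\phi=e_0^*+\sum_i x_ie_i^*\in H_1$ and $\psi=d_0^*+\sum_j y_jd_j^*\in H_2$; then $\Lambda(\phi\otimes\psi)=\Phi(x,y)$. One small point is well-definedness: a rank-one tensor $\phi\otimes\psi$ can be written with rescaled factors $(t\phi)\otimes(t^{-1}\psi)$. Normalising by $\phi(u_1)=1$ pins down the factors uniquely, so $\iota$ is well defined on $H_1\odot H_2$.

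For (b), the inclusion $\supseteq$ is immediate, since $S(E_k)\subseteq H_k$. For $\subseteq$, take $\omega=\phi\otimes\psi\in S(A)$ with $\phi\in H_1$ and $\psi\in H_2$. We must show $\phi\in S(E_1)$, i.e.\ $\phi\ge0$ on $E_1^+$, and similarly for $\psi$. This is where Axiom (3) of Definition~\ref{def:bimodule} is used. If $e\in E_1^+$, then $e\otimes u_2\in A^+$, so
\[
0\le \omega(e\otimes u_2)=\phi(e)\,\psi(u_2)=\phi(e).
\]
Hence $\phi$ is positive, and unital by $\phi\in H_1$. The symmetric argument with $u_1\otimes d$ handles $\psi$. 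So $\omega\in S(A)\cap(S(E_1)\odot S(E_2))=S_{\mathrm{biaff}}(A)$. This step is the only real content, and I expect it to be the main (mild) obstacle: one has to notice that Axiom (3) is exactly what forces the factors of a rank-one global state to be marginal states.

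For (c), apply the bijection $\Lambda$ and use (a):
\[
\Lambda(S(A))\cap\Sigma_{n,m}=\Lambda(S(A))\cap\Lambda(H_1\odot H_2)=\Lambda\bigl(S(A)\cap(H_1\odot H_2)\bigr),
\]
where the last equality uses injectivity of $\Lambda$. By (b) this equals $\Lambda(S_{\mathrm{biaff}}(A))$. By the identity $\Lambda=\Phi\circ\iota$ on $H_1\odot H_2$, this is $\Phi(\iota(S_{\mathrm{biaff}}(A)))=\Phi(K_A)$, by Definition~\ref{def:statesp}.
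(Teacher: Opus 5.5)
Your proof is correct and follows the paper's argument for all three parts, including the use of injectivity of $\Lambda$ to pass the intersection through in (c). You are slightly more careful than the paper: you construct explicit preimages for $\Lambda(H_1\odot H_2)\supseteq\Sigma_{n,m}$ and check that $\iota$ is well defined on rank-one tensors. The only difference is in (b): you get $e\otimes u_2\in A^+$ for $e\in E_1^+$ directly from Axiom (3), whereas the paper gets it from Axiom (2) via the inclusion $E_1^+\otimes_\epsilon E_2^+\subseteq A^+$ of Remark~\ref{re:ax2}; both routes are valid.
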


\begin{proof}
	Because $e_0 = u_1$ and $d_0 = u_2$, any functional $\phi \in H_1$ satisfies $(\phi(e_0), \dots, \phi(e_n))^T = \binom{1}{x}$ for some $x \in \mathbb{R}^n$, and any $\psi \in H_2$ satisfies $(\psi(d_0), \dots, \psi(d_m))^T = \binom{1}{y}$ for some $y \in \mathbb{R}^m$. The image under $\Lambda$ of the set of  rank-$1$ functionals $H_1 \odot H_2 := \{\phi \otimes \psi \mid \phi \in H_1, \psi \in H_2\}$ is precisely the Segre variety
	\[
	\Lambda(H_1 \odot H_2) = \left\{ \binom{1}{x} \otimes \binom{1}{y} \;\middle|\; x \in \mathbb{R}^n, \, y \in \mathbb{R}^m \right\} = \Sigma_{n,m}.
	\]
	
	More generally, for any $\phi \in H_1$ and $\psi \in H_2$ with $x = \iota_1(\phi) \in \mathbb{R}^n$ and $y = \iota_2(\psi) \in \mathbb{R}^m$, we have
	\[
	\Lambda(\phi \otimes \psi) = \binom{1}{x} \otimes \binom{1}{y} = \Phi(x, y) = \Phi(\iota(\phi \otimes \psi)).
	\]
	Since $\{e_i \otimes d_j\}_{i=0, j=0}^{n, m}$ is a basis for $A \cong E_1 \otimes E_2$, the map $\Lambda$ is the coordinate isomorphism associated with its dual basis, which establishes \textit{(a)}.
	
	Next, we show \textit{(b)}. The inclusion $(\supseteq)$ is immediate because $S(E_1) \subset H_1$ and $S(E_2) \subset H_2$. For the reverse inclusion, let $\phi \otimes \psi \in S(A) \cap (H_1 \odot H_2)$. For any $e \in E_1^+$, Axiom (2) (cf.~Remark \ref{re:ax2}) ensures that $e \otimes u_2 \in E_1^+ \otimes_\epsilon E_2^+ \subseteq A^+$. Since $\phi \otimes \psi \in S(A) \subset A^{*+}$, we have
	\[
	(\phi \otimes \psi)(e \otimes u_2) = \phi(e)\psi(u_2) = \phi(e) \ge 0,
	\]
	using $\psi(u_2) = 1$. Thus $\phi \in E_1^{*+}$, which together with $\phi(u_1) = 1$ implies $\phi \in S(E_1)$. By an identical argument testing against $u_1 \otimes d$ for $d \in E_2^+$, we obtain $\psi \in S(E_2)$. Therefore, $\phi \otimes \psi \in S(A) \cap (S(E_1) \odot S(E_2)) = S_{\mathrm{biaff}}(A)$, proving \textit{(b)}.
	
	Finally, for \textit{(c)}, since $\Lambda$ is injective, it preserves intersections of subsets. Using \textit{(a)} and \textit{(b)}, we obtain
	\[
	\Lambda(S(A)) \cap \Sigma_{n,m} = \Lambda(S(A)) \cap \Lambda(H_1 \odot H_2) = \Lambda(S(A) \cap (H_1 \odot H_2)) = \Lambda(S_{\mathrm{biaff}}(A)) = \Phi(K_A),
	\]
	which completes the proof.
\end{proof}

\begin{proposition}\label{prop:sets}
	Let $(A, u, A^+)$ be a biaffine order unit space of type $(n,m)$ over $(E_1, E_2)$ with designated affine bases $\{e_i\}_{i=0}^n$ of $E_1$ (with $e_0 = u_1$) and $\{d_j\}_{j=0}^m$ of $E_2$ (with $d_0 = u_2$). 
	Then the coordinate biaffine state space
	\[
	K_A = \iota\bigl(S_{\mathrm{biaff}}(A)\bigr) \subseteq \mathbb{R}^{n+m}
	\]
	is a compact non-degenerate biaffinely convex set.
\end{proposition}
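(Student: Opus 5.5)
The plan is to move everything to the ambient tensor space $\mathbb{R}^{(n+1)(m+1)}$ through the coordinate isomorphism $\Lambda$ of Lemma~\ref{le:lambda}. There, Lemma~\ref{le:lambda}(c) gives the key identity
\[
\Phi(K_A) = \Lambda(S(A)) \cap \Sigma_{n,m}.
\]
Each of the three properties (compactness, biaffine convexity, non-degeneracy) should then follow from properties of the compact convex set $\Lambda(S(A))$.

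\emph{Compactness.} $S(A)$ is a compact subset of $A^*$ because $A$ is a finite-dimensional AOU space. Since $\Lambda$ is a linear isomorphism, $\Lambda(S(A))$ is compact. The Segre set $\Sigma_{n,m}$ is closed: it is the image of $\mathbb{R}^{n+m}$ under $\Phi$, and $\Phi$ has a continuous left inverse given by reading off the coordinates $x_i$ and $y_j$, so $\Phi$ is a proper closed embedding. Hence $\Phi(K_A)$ is compact. Applying that continuous left inverse, which is the coordinate projection onto $(x,y)$, shows that $K_A$ itself is compact.

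\emph{Biaffine convexity.} By Lemma~\ref{le:extp}(iii), Axiom (2) gives $S(A)=\operatorname{conv}(S_{\mathrm{biaff}}(A))$. Because $\Lambda$ is linear, it commutes with convex hulls, so by Lemma~\ref{le:lambda}(a)
\[
\Lambda(S(A)) = \operatorname{conv}\bigl(\Lambda(S_{\mathrm{biaff}}(A))\bigr)=\operatorname{conv}(\Phi(K_A)).
\]
Substituting this into Lemma~\ref{le:lambda}(c) yields $\Phi(K_A)=\operatorname{conv}(\Phi(K_A))\cap\Sigma_{n,m}$. This is condition (2) of Definition~\ref{def:biaff_convex}, so by Lemma~\ref{le:phieq} $K_A$ is biaffinely convex.

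\emph{Non-degeneracy.} We need $\operatorname{span}\Phi(K_A)=\mathbb{R}^{(n+1)(m+1)}$. Since $\Lambda$ is an isomorphism, it suffices to show that $S_{\mathrm{biaff}}(A)$ spans $A^*$. Suppose $a\in A$ is annihilated by every element of $S_{\mathrm{biaff}}(A)$. By Lemma~\ref{le:extp}(iv), both $a$ and $-a$ lie in $A^+$. Since $A^+$ is a proper cone in an AOU space, this forces $a=0$. So the annihilator of $S_{\mathrm{biaff}}(A)$ is trivial, and in finite dimensions this means $S_{\mathrm{biaff}}(A)$ spans $A^*$. Transporting through $\Lambda$ shows that $\Phi(K_A)$ spans the full space.

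\emph{Expected obstacle.} The only delicate step is the hull identity $\Lambda(S(A))=\operatorname{conv}(\Phi(K_A))$. It relies on two facts: the biaffine states are exactly the rank-one states in $H_1\odot H_2$, which is Lemma~\ref{le:lambda}(b) and uses Axiom (2) via Remark~\ref{re:ax2}; and $\Lambda$ maps product states to Segre points, which is Lemma~\ref{le:lambda}(a). Both are already established, so the remaining work is bookkeeping. One further check is that $K_A$ is nonempty. This holds because $S(A)\neq\emptyset$, so Lemma~\ref{le:extp} forces $S_{\mathrm{biaff}}(A)\neq\emptyset$.
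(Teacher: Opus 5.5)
Your proposal is correct and has the same structure as the paper's proof; in particular, the biaffine convexity step (Lemma~\ref{le:extp}(iii), pushing the convex hull through $\Lambda$, then Lemma~\ref{le:lambda}(c)) is identical. Your compactness argument (closedness of $\Sigma_{n,m}$ plus Lemma~\ref{le:lambda}(c)) and non-degeneracy argument (trivial annihilator via Lemma~\ref{le:extp}(iv) and pointedness of $A^+$) are minor variants of the paper's, which uses compactness of $S(E_1)\odot S(E_2)$ with continuity of $\iota=\Phi^{-1}\circ\Lambda$, and the fact that the dual of a closed pointed cone is generating together with $A^{*+}=\operatorname{cone}(S_{\mathrm{biaff}}(A))$.
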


\begin{proof}
	\textit{Compactness}: Because $E_1$ and $E_2$ are finite-dimensional AOU spaces, 
	their state spaces $S(E_1) \subseteq E_1^*$ and $S(E_2) \subseteq E_2^*$ are compact 
	convex sets. Consequently, the set of rank-1 product states 
	\[
	S(E_1) \odot S(E_2) := \{\phi \otimes \psi \mid \phi \in S(E_1), \, \psi \in S(E_2)\} \subseteq A^*
	\]
	is compact. The global state space $S(A) = \{\omega \in A^{*+} \mid \omega(u) = 1\}$ 
	is a compact convex base of the pointed closed cone $A^{*+}$. Therefore, the biaffine 
	state space
	\[
	S_{\mathrm{biaff}}(A) = S(A) \cap (S(E_1) \odot S(E_2))
	\]
	is compact as the intersection of a closed set and a compact set. 
	
	To see that the coordinate evaluation map $\iota: S(E_1) \odot S(E_2) \to \mathbb{R}^{n+m}$ 
	is continuous, recall from Lemma~\ref{le:lambda}(a) that 
	$\Lambda(\phi \otimes \psi) = \Phi(\iota(\phi \otimes \psi))$. Since the canonical 
	embedding $\Phi: \mathbb{R}^{n+m} \to \Sigma_{n,m}$ has a continuous inverse given by 
	coordinate projection, on product states we have
	\[
	\iota = \Phi^{-1} \circ \Lambda.
	\]
	Because $\Lambda: A^* \to \mathbb{R}^{(n+1)(m+1)}$ is linear (hence continuous) and 
	$\Phi^{-1}$ is continuous, $\iota$ is continuous on $S(E_1) \odot S(E_2)$. 
	Consequently, its image $K_A = \iota(S_{\mathrm{biaff}}(A))$ is a compact subset of $\mathbb{R}^{n+m}$.
	
	\textit{Biaffine convexity:} 
	By Lemma \ref{le:extp} \textit{(iii)}, $S(A) = \operatorname{conv}(S_{\mathrm{biaff}}(A))$. Applying the linear coordinate isomorphism $\Lambda$ from Lemma \ref{le:lambda} yields
	\[
	\Lambda(S(A)) = \Lambda\bigl(\operatorname{conv}(S_{\mathrm{biaff}}(A))\bigr) = \operatorname{conv}\bigl(\Lambda(S_{\mathrm{biaff}}(A))\bigr) = \operatorname{conv}(\Phi(K_A)) \text{}.
	\]
	Intersecting both sides with the Segre variety $\Sigma_{n,m}$ and applying Lemma \ref{le:lambda} \textit{(c)} gives
	\[
	\operatorname{conv}(\Phi(K_A)) \cap \Sigma_{n,m} = \Lambda(S(A)) \cap \Sigma_{n,m} = \Phi(K_A) \text{}.
	\]
	By Lemma \ref{le:phieq}, this equality is equivalent to $K_A = \operatorname{biconv}(K_A)$, so $K_A$ is biaffinely convex.
	
	\textit{Non-degeneracy of $K_A$:} 
	Because $A$ is an Archimedean order unit space, its positive cone $A^+$ is closed and pointed ($A^+ \cap (-A^+) = \{0\}$). In finite dimensions, the dual cone of a closed, pointed cone is generating, which ensures
	$$\operatorname{span}(A^{*+}) = A^* = E_1^* \otimes E_2^*$$
	By Axiom (2) (or Lemma \ref{le:extp}),  $A^{*+} = \operatorname{cone}(S_{\text{biaff}}(A))$. Consequently,
	$$\operatorname{span}(S_{\text{biaff}}(A)) = \operatorname{span}(A^{*+}) = A^*$$
	Recall the linear coordinate isomorphism $\Lambda: A^* \to \mathbb{R}^{(n+1)(m+1)}$ given by $\Lambda(\omega) := (\omega(e_i \otimes d_j))_{i=0, j=0}^{n, m}$. On $S_{\text{biaff}}(A)$, we have $\Lambda(\phi \otimes \psi) = \Phi(\iota(\phi \otimes \psi))$, meaning $\Lambda(S_{\text{biaff}}(A)) = \Phi(K_A)$. Applying $\Lambda$ gives
	$$\operatorname{span}(\Phi(K_A)) = \operatorname{span}(\Lambda(S_{\text{biaff}}(A))) = \Lambda(\operatorname{span}(S_{\text{biaff}}(A))) = \Lambda(A^*) = \mathbb{R}^{(n+1)(m+1)}.$$
	Thus, $K_A$ is non-degenerate.
\end{proof}

\begin{lemma}\label{le:basis-change}
	Let $A$ be a biaffine order unit space of type $(n,m)$ over $(E_1, E_2)$. Let $\mathcal{B} = (\{e_i\}_{i=0}^n, \{d_j\}_{j=0}^m)$ and $\mathcal{B}' = (\{e'_i\}_{i=0}^n, \{d'_j\}_{j=0}^m)$ be two choices of designated affine bases for $E_1$ and $E_2$ (with $e_0 = e'_0 = u_1$ and $d_0 = d'_0 = u_2$), and let $K_A, K'_A \subseteq \mathbb{R}^{n+m}$ be the corresponding coordinate biaffine state spaces.  Then $K_A$ and $K'_A$ are isomorphic biaffinely convex sets.
\end{lemma}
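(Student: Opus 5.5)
The plan is to build the isomorphism from the change-of-basis matrices on each factor. Since $e_0 = e'_0 = u_1$, the transition from $\{e'_i\}$ to $\{e_i\}$ has the form $e'_i = \sum_{k=0}^n a_{ik} e_k$ for $i = 0,\dots,n$. Here $a_{00}=1$ and $a_{0k}=0$ for $k\ge 1$, and the full matrix $(a_{ik})$ is invertible. Similarly, we write $d'_j = \sum_{l=0}^m b_{jl} d_l$, with $b_{00}=1$ and $b_{0l}=0$ for $l \ge 1$.

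For any $\phi \in S(E_1)$, set $x = (\phi(e_1),\dots,\phi(e_n))$ and $x' = (\phi(e'_1),\dots,\phi(e'_n))$. Using $\phi(e_0)=\phi(u_1)=1$, we get
\[
x'_i = a_{i0} + \sum_{k=1}^n a_{ik} x_k .
\]
So $x' = \phi_0(x)$ for the affine map $\phi_0(x) := a_{\cdot 0} + M x$, where $M = (a_{ik})_{i,k\ge 1}$. Because the full matrix is block lower-triangular with first row $(1,0,\dots,0)$, its determinant equals $\det M$. Hence $M$ is invertible and $\phi_0$ is an affine isomorphism of $\mathbb{R}^n$. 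The same argument produces an affine isomorphism $\psi_0$ of $\mathbb{R}^m$ with $y' = \psi_0(y)$ whenever $y$ and $y'$ are the coordinates of a state $\psi\in S(E_2)$ in the two bases.

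The crucial point is that these maps do not depend on the state. Therefore, for every product state $\phi\otimes\psi \in S_{\mathrm{biaff}}(A)$ we have
\[
\iota'(\phi\otimes\psi) = \bigl(\phi_0(x), \psi_0(y)\bigr) = \Theta\bigl(\iota(\phi\otimes\psi)\bigr), \qquad \Theta := \phi_0\times\psi_0 .
\]
The set $S_{\mathrm{biaff}}(A)$ itself is intrinsic: it is defined in Definition~\ref{def:statesp} without any reference to bases. It follows that $\Theta(K_A) = \iota'(S_{\mathrm{biaff}}(A)) = K'_A$. By Proposition~\ref{prop:sets}, both sets are compact, non-degenerate and biaffinely convex. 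Hence Definition~\ref{def:biiso} gives the claimed isomorphism.

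One subtlety is well-definedness. The maps $\iota$ and $\iota'$ are defined on tensors $\phi\otimes\psi$, and the factorization of a nonzero rank-one tensor is unique only up to scaling, $\phi \otimes \psi = (t\phi)\otimes(t^{-1}\psi)$. The normalizations $\phi(u_1)=\psi(u_2)=1$ force $t=1$, so the coordinates are well defined. This is also implicit in Lemma~\ref{le:lambda}. The main (mild) obstacle is therefore just the bookkeeping that the unit constraint $e_0=e'_0=u_1$ makes the coordinate change affine rather than merely projective. Everything else is routine.
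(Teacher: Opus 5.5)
Your proposal is correct and follows essentially the same route as the paper: you expand the primed basis in the unprimed one, use $e_0=e'_0=u_1$ to get the block form with $\det = \det M$, obtain the affine coordinate change $x'=Mx+v$ (and likewise for $y$), and conclude $\Theta(K_A)=K'_A$ from $\Theta\circ\iota_{\mathcal{B}}=\iota_{\mathcal{B}'}$ on the basis-independent set $S_{\mathrm{biaff}}(A)$. Your extra remark that the normalizations $\phi(u_1)=\psi(u_2)=1$ make $\iota$ well defined on rank-one tensors is a harmless addition that the paper leaves implicit.
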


\begin{proof}
	Because $\{e_i\}_{i=0}^n$ and $\{e'_i\}_{i=0}^n$ are both bases of $E_1$ with $e_0 = e'_0 = u_1$, each basis element $e'_k$ ($1 \le k \le n$) uniquely expands as
	$$e'_k = \alpha_{k0} u_1 + \sum_{i=1}^n \alpha_{ki} e_i$$
	for coefficients $\alpha_{ki} \in \mathbb{R}$. 
	For any $\phi_0 \in S(E_1)$, we have $\phi_0(u_1) = 1$ and thus
	$$\phi_0(e'_k) = \alpha_{k0}\phi_0(u_1) + \sum_{i=1}^n \alpha_{ki}\phi_0(e_i) = \alpha_{k0} + \sum_{i=1}^n \alpha_{ki}\phi_0(e_i).$$
	Setting $x = (\phi_0(e_1), \dots, \phi_0(e_n))^\top \in \mathbb{R}^n$ and $x' = (\phi_0(e'_1), \dots, \phi_0(e'_n))^\top \in \mathbb{R}^n$, this coordinate change takes the form
	$$x' = M x + v,$$
	where $M = (\alpha_{ki})_{k,i=1}^n \in \mathbb{R}^{n \times n}$ and $v = (\alpha_{k0})_{k=1}^n \in \mathbb{R}^n$. 
	The transition matrix between the full bases $\{e_i\}_{i=0}^n$ and $\{e'_i\}_{i=0}^n$ is the block matrix
	$$T = \begin{pmatrix} 1 & 0 \\ v & M \end{pmatrix}.$$
	Since $T$ is invertible, $\det(T) = \det(M) \neq 0$, which proves that $M$ is invertible. Hence, the map $\phi: \mathbb{R}^n \to \mathbb{R}^n$, $\phi(x) := Mx + v$, is an affine isomorphism.
	
	By an identical argument on $E_2$ with $d_0 = d'_0 = u_2$, there exists an affine isomorphism $\psi: \mathbb{R}^m \to \mathbb{R}^m$ such that $\psi((\psi_0(d_1), \dots, \psi_0(d_m))^\top) = (\psi_0(d'_1), \dots, \psi_0(d'_m))^\top$ for all $\psi_0 \in S(E_2)$. Defining the product map $\Theta(x,y) := (\phi(x), \psi(y))$, we have by construction that $\Theta(\iota_{\mathcal{B}}(\omega)) = \iota_{\mathcal{B}'}(\omega)$ for every rank-1 state $\omega \in S(E_1) \odot S(E_2)$. In particular, $\Theta(K_A) = K'_A$. By Definition \ref{def:biiso}, $K_A$ and $K'_A$ are isomorphic biaffinely convex sets. 
\end{proof}

\subsection{The duality theorem} We now unite the forward assignment $K \mapsto \mathcal{P}_{biaff}(K)$ and the state space reconstruction $A \mapsto K_A$ to establish our main result, Theorem \ref{thm:dual}, a Kadison duality for biaffinely convex sets. We split the proof into two parts:~the geometric isomorphism ($K \cong K_{\mathcal{P}_{biaff}(K)}$) in Proposition \ref{prop:setiso} and the algebraic isomorphism ($A \cong \mathcal{P}_{biaff}(K_A)$) in Proposition \ref{prop:fniso}.


\begin{proposition}\label{prop:setiso}
	Let $K \subseteq \R^{n+m}$ be a compact non-degenerate biaffinely convex set. Then $\PhiAff(K)$ is a biaffine order unit space of type $(n,m)$ and $K$ is isomorphic to $K_{\PhiAff(K)}.$
\end{proposition}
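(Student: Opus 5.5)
The first assertion is exactly Proposition~\ref{prop:fnsp}. So the work is to identify $K_A$ for $A=\PhiAff(K)$ with $E_1=\operatorname{Aff}(\pi_1(K))$ and $E_2=\operatorname{Aff}(\pi_2(K))$. By Lemma~\ref{le:basis-change}, $K_A$ is well defined up to isomorphism, so we may fix the designated bases $e_0=u_1$, $e_i=x_i|_{\pi_1(K)}$ and $d_0=u_2$, $d_j=y_j|_{\pi_2(K)}$. These are bases by Remark~\ref{rem:non-deg}. With this choice we aim to show $K_A=K$ on the nose, so that the isomorphism $\Theta$ of Definition~\ref{def:biiso} is the identity.

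The inclusion $K\subseteq K_A$ comes first. For $(x,y)\in K$, the evaluations $\operatorname{ev}_x\in S(E_1)$ and $\operatorname{ev}_y\in S(E_2)$ are states. Their product $\operatorname{ev}_x\otimes\operatorname{ev}_y=\operatorname{ev}_{(x,y)}$ is positive on $A^+$ and unital, so it lies in $S_{\mathrm{biaff}}(A)$. Its image under $\iota$ is $(x,y)$.

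The reverse inclusion $K_A\subseteq K$ is where biaffine convexity enters, and I expect it to be the main step. I will pass to the lifted picture. Let $\Lambda$ be as in Lemma~\ref{le:lambda}. Under our bases, $\Lambda(\operatorname{ev}_{(x,y)})=\Phi(x,y)$. By Lemma~\ref{le:cone}, $A^{*+}=\operatorname{cone}(\mathcal{E}(K))$, hence $S(A)=\operatorname{conv}(\mathcal{E}(K))$, because all evaluations take the value $1$ at $u$. Applying $\Lambda$ gives $\Lambda(S(A))=\operatorname{conv}(\Phi(K))$. Lemma~\ref{le:lambda}(c) then yields
\[
\Phi(K_A)=\Lambda(S(A))\cap\Sigma_{n,m}=\operatorname{conv}(\Phi(K))\cap\Sigma_{n,m}.
\]
Biaffine convexity of $K$, in the form of Lemma~\ref{le:phieq}(b)(2), says the right side equals $\Phi(K)$. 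Injectivity of $\Phi$ then gives $K_A=K$. Since this argument also gives $\supseteq$, the previous paragraph serves only as a sanity check.

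One point needs care: the identity $\Lambda(\phi\otimes\psi)=\Phi(\iota(\phi\otimes\psi))$ must hold for functionals on $A=\PhiAff(K)$, viewed through the isomorphism $\Theta:E_1\otimes E_2\to\PhiAff(K)$ from the proof of Proposition~\ref{prop:fnsp}. I will check that $e_i\otimes d_j$ corresponds to the monomial $x_iy_j|_K$, with $x_0=y_0=1$. Then $\operatorname{ev}_{(x,y)}(e_i\otimes d_j)=x_iy_j$, which is the $(i,j)$ entry of $\Phi(x,y)$.
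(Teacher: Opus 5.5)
Your proposal is correct and follows the paper's proof essentially step for step. You fix the canonical coordinate bases and obtain $\Lambda(S(A))=\operatorname{conv}(\Phi(K))$ from Lemma~\ref{le:cone}. You then intersect with $\Sigma_{n,m}$ via Lemma~\ref{le:lambda}(c) and use biaffine convexity together with injectivity of $\Phi$ to get $K_A=K$, so the identity map is the isomorphism. Your direct check of $K\subseteq K_A$ and the appeal to Lemma~\ref{le:basis-change} are harmless extras that the paper does not need.
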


\begin{proof}
	Let $K \subseteq \mathbb{R}^{n+m}$ be a compact non-degenerate biaffinely convex set. By Proposition \ref{prop:fnsp}, $A := \mathcal{P}_{\mathrm{biaff}}(K)$ is a biaffine order unit space of type $(n,m)$ over $E_1 = \operatorname{Aff}(\pi_1(K))$ and $E_2 = \operatorname{Aff}(\pi_2(K))$. Equip $E_1$ and $E_2$ with their canonical affine coordinate bases
	\[
	e_0 = u_1, \quad e_i(x) = x_i \ (1 \le i \le n); \qquad d_0 = u_2, \quad d_j(y) = y_j \ (1 \le j \le m).
	\]
	By Lemma \ref{le:cone},  $S(A) = \operatorname{conv}(\mathcal{E}(K))$, where $\mathcal{E}(K) := \{ \operatorname{ev}_{(x,y)} \mid (x,y) \in K \}$. Under the coordinate map $\Lambda$ from Lemma \ref{le:lambda}, each evaluation functional satisfies $\Lambda(\operatorname{ev}_{(x,y)}) = \Phi(x,y)$, so $\Lambda(\mathcal{E}(K)) = \Phi(K)$. By linearity of $\Lambda$,
	\[
	\Lambda(S(A)) = \Lambda(\operatorname{conv}(\mathcal{E}(K))) = \operatorname{conv}(\Lambda(\mathcal{E}(K))) = \operatorname{conv}(\Phi(K)).
	\]
	Intersecting both sides with the Segre variety $\Sigma_{n,m}$ and applying Lemma \ref{le:lambda} \textit{(c)} yields
	\[
	\operatorname{conv}(\Phi(K)) \cap \Sigma_{n,m} = \Lambda(S(A)) \cap \Sigma_{n,m} = \Phi(K_A).
	\]
	Since $K$ is biaffinely convex, Lemma \ref{le:phieq} gives $\operatorname{conv}(\Phi(K)) \cap \Sigma_{n,m} = \Phi(K)$. Therefore, $\Phi(K) = \Phi(K_A)$. Because the embedding $\Phi: \mathbb{R}^{n+m} \to \Sigma_{n,m}$ is a bijection, this implies $K = K_A$. The identity map is therefore an isomorphism between $K$ and $K_{\mathcal{P}_{\mathrm{biaff}}(K)}$.
\end{proof}

\begin{remark}
	An inspection of the proof of Proposition \ref{prop:setiso} shows that if  $K \subseteq \R^{n+m}$ is only assumed compact and non-degenerate, then $K_{\PhiAff(K)} = \text{biconv}(K).$
\end{remark}

\begin{proposition}\label{prop:fniso}
	Let $A$ be an abstract biaffine order unit space of type $(n,m)$ over $(E_1,E_2)$. 
	For any choice of designated affine coordinate bases $\{e_i\}_{i=0}^n$ of $E_1$ 
	(with $e_0 = u_1$) and $\{d_j\}_{j=0}^m$ of $E_2$ (with $d_0 = u_2$), the associated 
	coordinate biaffine state space $K_A \subseteq \mathbb{R}^{n+m}$ is a compact 
	non-degenerate biaffinely convex set, and $A$ is isomorphic to $\mathcal{P}_{\mathrm{biaff}}(K_A)$.
\end{proposition}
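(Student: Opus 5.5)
The first assertion, that $K_A$ is a compact non-degenerate biaffinely convex set, is exactly Proposition~\ref{prop:sets}, so the work lies in the algebraic isomorphism. We want a map $\Theta\colon A \to \mathcal{P}_{\mathrm{biaff}}(K_A)$ that is an isomorphism in the sense of Definition~\ref{def:biaff_iso}. The natural candidate is the Gelfand-type evaluation map
\[
\Theta(a)(x,y) := \omega_{(x,y)}(a), \qquad (x,y) \in K_A,
\]
where $\omega_{(x,y)} = \phi\otimes\psi \in S_{\mathrm{biaff}}(A)$ is the product state with $\iota(\phi\otimes\psi) = (x,y)$. This state is well defined because $\iota = \Phi^{-1}\circ\Lambda$ is injective on product states, by Lemma~\ref{le:lambda}(a). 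On basis tensors we get $\Theta(e_i\otimes d_j)(x,y) = \phi(e_i)\psi(d_j) = x_i y_j$, with $x_0 = y_0 = 1$. Hence $\Theta(a) = \langle \Lambda^{-*}\text{-coefficients of } a, \Phi(x,y)\rangle$, and $\Theta$ sends the coefficient vector of $a$ in the basis $\{e_i\otimes d_j\}$ to the biaffine polynomial with the same coefficient vector. So $\Theta$ is linear with values in $\mathcal{P}_{\mathrm{biaff}}(K_A)$.

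\emph{Bijectivity and unitality.} $\Theta$ is surjective because every biaffine polynomial $\sum c_{ij}x_iy_j$ is the image of $\sum c_{ij} e_i\otimes d_j$. It is injective because $K_A$ is non-degenerate (Proposition~\ref{prop:sets}), so the restriction map of Lemma~\ref{le:inj} is injective. Equivalently, if $\Theta(a)=0$ then $a$ is annihilated by $S_{\mathrm{biaff}}(A)$, which spans $A^*$. Unitality holds since $\Theta(u_1\otimes u_2) = 1_{K_A}$.

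\emph{Order isomorphism.} We need $\Theta(A^+) = \mathcal{P}_{\mathrm{biaff}}(K_A)^+$. An element satisfies $\Theta(a)\ge 0$ on $K_A$ iff $(\phi\otimes\psi)(a)\ge 0$ for all $\phi\otimes\psi\in S_{\mathrm{biaff}}(A)$. By Lemma~\ref{le:extp}(iv), which follows from Axiom (2), this holds iff $a\in A^+$. Thus $\Theta$ and $\Theta^{-1}$ are both positive.

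\emph{Factorization.} This is the step needing the most care. Put $F_1=\operatorname{Aff}(\pi_1(K_A))$ and $F_2=\operatorname{Aff}(\pi_2(K_A))$, and define $\Theta_1(e)(x) := \phi(e)$, where $\phi\in S(E_1)$ has coordinates $x$; $\Theta_2$ is defined analogously. Concretely, $\Theta_1(e_i) = x_i$ and $\Theta_1(u_1) = 1$. This is a linear bijection, since $\pi_1(K_A)$ affinely spans $\mathbb{R}^n$ by Remark~\ref{rem:non-deg}. We have $\Theta = \Theta_1\otimes\Theta_2$ on basis tensors, hence everywhere, using the identification of Proposition~\ref{prop:fnsp}.

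The main obstacle is showing that $\Theta_1$ is an order isomorphism $E_1^+\to F_1^+$. For $e\in E_1$, Axiom (3) gives $e\in E_1^+$ iff $e\otimes u_2\in A^+$. By the previous step this holds iff $\Theta(e\otimes u_2) = \Theta_1(e)\otimes 1 \ge 0$ on $K_A$. That in turn holds iff $\Theta_1(e)\ge 0$ on $\pi_1(K_A)$, i.e.\ $\Theta_1(e)\in F_1^+$. The argument for $\Theta_2$ is symmetric, and unitality of both factors is clear. This completes the identification $A\cong\mathcal{P}_{\mathrm{biaff}}(K_A)$ as biaffine order unit spaces.
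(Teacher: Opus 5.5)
Your proposal is correct and follows essentially the same route as the paper. Your maps $\Theta$, $\Theta_1$, $\Theta_2$ are exactly the paper's $\Psi=\mu\circ(\Psi_1\otimes\Psi_2)$, $\Psi_1$, $\Psi_2$, with the global order isomorphism coming from Lemma~\ref{le:extp}(iv) and the factor order isomorphisms from Axiom~(3); the only difference is that you build the global evaluation map first and then factor it, while the paper builds the factor maps first and composes them with $\mu$.
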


\begin{proof}
	Let $(A, E_1,E_2, u, A^+)$ be a biaffine order unit space of type $(n,m)$ over $(E_1, E_2)$ 
	equipped with designated affine bases $\{e_i\}_{i=0}^n$ and $\{d_j\}_{j=0}^m$. 
	By Proposition~\ref{prop:sets}, the coordinate biaffine state space 
	$K_A \subseteq \mathbb{R}^{n+m}$ is a compact non-degenerate biaffinely convex set. 
	We show that $A \cong \mathcal{P}_{\mathrm{biaff}}(K_A)$ in the sense of Definition~\ref{def:biaff_iso}.
	
	Set $\iota_1(\varphi)=(\varphi(e_i))_i, \iota_2(\psi)=(\psi(d_j))_j$ and define the maps
	\begin{align*}
		\Psi_1: E_1 &\longrightarrow \operatorname{Aff}(\pi_1(K_A)), \quad (\Psi_1(e))(x) := \phi_x(e), \quad \text{where } \phi_x = \iota_1^{-1}(x) \in S(E_1), \\
		\Psi_2: E_2 &\longrightarrow \operatorname{Aff}(\pi_2(K_A)), \quad (\Psi_2(d))(y) := \psi_y(d), \quad \text{where } \psi_y = \iota_2^{-1}(y) \in S(E_2) .
	\end{align*}
	By construction, $\Psi_1(u_1) = 1_{\pi_1(K_A)}$, $\Psi_1(e_i)(x) = x_i$ for $i \ge 1$, $\Psi_2(u_2) = 1_{\pi_2(K_A)}$, and $\Psi_2(d_j)(y) = y_j$ for $j \ge 1$. Hence $\Psi_1$ and $\Psi_2$ are unital linear bijections.
	
	We claim that $\Psi_1$ and $\Psi_2$ are order isomorphisms between AOU spaces. By Axiom (3),
	\[
	e \in E_1^+ \iff e \otimes u_2 \in A^+ .
	\]
	The map $\iota$ restricts to a bijection between $S_{\mathrm{biaff}}(A)$ and $K_A$, so every element of $S_{\mathrm{biaff}}(A)$ is uniquely of the form $\phi_x \otimes \psi_y$ for $(x,y) \in K_A.$ By Lemma \ref{le:extp} \textit{(iv)}, an element belongs to $A^+$ if and only if it evaluates non-negatively on all elements of $S_{\mathrm{biaff}}(A)$. For  $e \otimes u_2,$ this means
	\begin{align*}
		e \otimes u_2 \in A^+ &\iff (\phi_x \otimes \psi_y)(e \otimes u_2) \ge 0 \quad \forall (x,y) \in K_A = \iota(S_{\mathrm{biaff}}(A)) \\
		&\iff \phi_x(e)\psi_y(u_2) \ge 0 \quad \forall (x,y) \in K_A \\
		&\iff (\Psi_1(e))(x) \ge 0 \quad \forall x \in \pi_1(K_A) \\
		&\iff \Psi_1(e) \in \operatorname{Aff}(\pi_1(K_A))^+.
	\end{align*}
	Thus $\Psi_1(E_1^+) = \operatorname{Aff}(\pi_1(K_A))^+$, proving that $\Psi_1$ is a unital order isomorphism. An identical calculation with $u_1 \otimes d$ shows that $\Psi_2(E_2^+) = \operatorname{Aff}(\pi_2(K_A))^+$.
	
	
	Finally, consider the map
	$$\mu : \operatorname{Aff}(\pi_1(K_A)) \otimes \operatorname{Aff}(\pi_2(K_A)) \longrightarrow \mathcal{P}_{\mathrm{biaff}}(K_A), \quad \mu(f \otimes g)(x,y) := f(x)g(y).$$
	Since $K_A$ is non-degenerate (Proposition \ref{prop:sets}), the argument from Axiom (1) of Proposition \ref{prop:fnsp} shows that $\mu$ is a linear isomorphism. We define the map $\Psi$ as the composition
	$$\Psi := \mu \circ (\Psi_1 \otimes \Psi_2) : A \cong E_1 \otimes E_2 \longrightarrow \mathcal{P}_{\mathrm{biaff}}(K_A).$$
	Under this map, for any $a \in A$ and $(x,y) \in K_A$, we have$$\Psi(a)(x,y) = (\phi_x \otimes \psi_y)(a).$$
	The map $\Psi$ is unital because $\Psi(u)(x,y) = (\phi_x \otimes \psi_y)(u_1 \otimes u_2) = 1$, so $\Psi(u) = 1_{K_A}$.  Moreover, $\Psi$ is both order-preserving and order-reflecting. 
	By Lemma \ref{le:extp} (iv),
	\begin{align*}
		a \in A^+ &\iff (\phi_x \otimes \psi_y)(a) \ge 0 \quad \forall (x,y) \in K_A\\ &\iff \Psi(a)(x,y) \ge 0 \quad \forall (x,y) \in K_A \\
		&\iff \Psi(a) \in \mathcal{P}_{\mathrm{biaff}}(K_A)^+.
	\end{align*}  
	Thus $\Psi(A^+) = \mathcal{P}_{\mathrm{biaff}}(K_A)^+$. Since $\Psi$ is a unital order isomorphism 
	whose factor maps $\Psi_1, \Psi_2$ are unital order isomorphisms, $\Psi$ is an isomorphism 
	of biaffine order unit spaces in the sense of Definition \ref{def:biaff_iso}.
\end{proof}

\section{Categorical duality}\label{sec:4}

Theorem~\ref{thm:dual} established an object-wise duality identifying any compact non-degenerate biaffinely convex set $K$ with the coordinate state space of its biaffine polynomials, and conversely, identifying any abstract biaffine order unit space $A$ with the biaffine polynomials on $K_A$. In this section, we elevate this correspondence to a contravariant categorical equivalence
\[
\mathbf{BiAffConv}^{\mathrm{op}} \simeq \mathbf{BiAOU}.
\]

\subsection{The Categories $\mathbf{BiAffConv}$ and $\mathbf{BiAOU}$}

We begin by defining the appropriate morphisms for both geometric and algebraic categories.

\begin{definition}
	The category of biaffinely convex sets, denoted by $\mathbf{BiAffConv}$, consists of:
	\begin{enumerate}
		\item \textit{Objects:} Pairs $(K, (n, m))$ where $n, m \ge 1$ and $K \subseteq \mathbb{R}^n \times \mathbb{R}^m$ is a compact, non-degenerate biaffinely convex set.
		\item \textit{Morphisms:} For objects $K \subseteq \mathbb{R}^{n_1+m_1}$ and $L \subseteq \mathbb{R}^{n_2+m_2}$, a morphism $\Theta \in \operatorname{Mor}_{\mathbf{BiAffConv}}(K, L)$ is a map $\Theta: K \to L$ given by a product of affine maps:
		\[
		\Theta(x, y) = (\phi(x), \psi(y)),
		\]
		where $\phi: \mathbb{R}^{n_1} \to \mathbb{R}^{n_2}$ and $\psi: \mathbb{R}^{m_1} \to \mathbb{R}^{m_2}$ are affine maps satisfying $\Theta(K) \subseteq L$.
	\end{enumerate}
	Composition of morphisms is given by standard function composition:
	\[
	(\phi_2, \psi_2) \circ (\phi_1, \psi_1) = (\phi_2 \circ \phi_1, \psi_2 \circ \psi_1),
	\]
	and the identity morphism on $K$ is $\operatorname{id}_K = (\operatorname{id}_{\mathbb{R}^n}, \operatorname{id}_{\mathbb{R}^m})|_K$.
\end{definition}

\begin{definition}
	The category of biaffine order unit spaces, denoted by $\mathbf{BiAOU}$, consists of:
	\begin{enumerate}
		\item \textit{Objects:} Finite-dimensional biaffine order unit spaces $(A, E_1, E_2, u_A, A^+)$ satisfying Definition~\ref{def:bimodule}.
		\item \textit{Morphisms:} For objects $B = (B, F_1, F_2, u_B, B^+)$ and $A = (A, E_1, E_2, u_A, A^+)$, a morphism $T \in \operatorname{Mor}_{\mathbf{BiAOU}}(B, A)$ is a unital, positive linear map $T: B \to A$ that respects the factor decomposition
		\[
		T = T_1 \otimes T_2,
		\]
		where $T_1: F_1 \to E_1$ and $T_2: F_2 \to E_2$ are unital linear maps on the factor AOU spaces ($T_1(u_{F_1}) = u_{E_1}$, $T_2(u_{F_2}) = u_{E_2}$, $T_1(F_1^+) \subseteq E_1^+$, $T_2(F_2^+) \subseteq E_2^+$) and $T(B^+) \subseteq A^+$.
	\end{enumerate}
	Composition is the standard tensor composition of linear maps $(S_1 \otimes S_2) \circ (T_1 \otimes T_2) = (S_1 \circ T_1) \otimes (S_2 \circ T_2)$, with identity $\operatorname{id}_A = \operatorname{id}_{E_1} \otimes \operatorname{id}_{E_2}$.
\end{definition}

\begin{remark}
	Note that if $T = T_1 \otimes T_2$ is positive and $T_1, T_2$ are unital, positivity of 
	the factor maps ($T_1(F_1^+) \subseteq E_1^+$ and $T_2(F_2^+) \subseteq E_2^+$) 
	is automatically satisfied. Indeed, for any $f \in F_1^+$, Axiom (3) applied to $B$ gives $f \otimes u_{F_2} \in B^+$. Because $T$ is positive and $T_2$ is unital, $T_1(f) \otimes u_{E_2} = T(f \otimes u_{F_2}) \in A^+$. Applying Axiom (3) to $A$ then immediately yields $T_1(f) \in E_1^+$.
\end{remark}

\begin{remark}
	In Definition~\ref{def:biaff_convex}, the objects are subsets of the Cartesian product 
	$\mathbb{R}^n \times \mathbb{R}^m$ where the two coordinate blocks are distinguished 
	and ordered (reflecting the fixed two-variable decomposition of functions $p(x,y)$). 
	Consequently, morphisms are chosen to be factor-preserving, mapping the first coordinate 
	block to the first and the second to the second. 
	
	Even when $n_1 = m_1 = n_2 = m_2$, coordinate permutations such as 
	$(x,y) \mapsto (y,x)$ (and their algebraic counterparts, flip isomorphisms 
	$a \otimes b \mapsto b \otimes a$) are deliberately excluded from the morphism class. 
	This reflects the convention in partial convexity that coordinate blocks represent 
	designated, distinct variables. 
\end{remark}

\subsection{The Duality Functors}

We now define the contravariant functors connecting the geometric and algebraic categories.

\begin{proposition}
	There is a functor 
	\[
	\mathcal{P}: \mathbf{BiAffConv}^{\mathrm{op}} \longrightarrow \mathbf{BiAOU}
	\]
	defined as follows:
	\begin{enumerate}
		\item \textit{On objects:} For $K \in \mathbf{BiAffConv}$, $\mathcal{P}(K) := \mathcal{P}_{\mathrm{biaff}}(K)$.
		\item \textit{On morphisms:} For $\Theta = (\phi, \psi) \in \operatorname{Mor}_{\mathbf{BiAffConv}}(K, L)$, the map $\mathcal{P}(\Theta) = \Theta^*: \mathcal{P}_{\mathrm{biaff}}(L) \to \mathcal{P}_{\mathrm{biaff}}(K)$ is defined by pullback
		\[
		\Theta^*(q) := q \circ \Theta \quad \text{for } q \in \mathcal{P}_{\mathrm{biaff}}(L).
		\]
	\end{enumerate}
\end{proposition}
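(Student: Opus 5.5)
To prove that $\mathcal{P}$ is a functor, we check three things in turn: that $\Theta^*$ is a well-defined map between the right spaces, that it is a morphism of $\mathbf{BiAOU}$, and that the assignment respects identities and (reversed) composition. On objects there is nothing new to prove, since Proposition~\ref{prop:fnsp} already shows that $\mathcal{P}_{\mathrm{biaff}}(K)$ is a biaffine order unit space over $E_1=\operatorname{Aff}(\pi_1(K))$ and $E_2=\operatorname{Aff}(\pi_2(K))$.

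\emph{Well-definedness.} Let $\Theta=(\phi,\psi)$ with $K\subseteq\mathbb{R}^{n_1+m_1}$ and $L\subseteq\mathbb{R}^{n_2+m_2}$. By Lemma~\ref{le:inj} (non-degeneracy of $L$), every $q\in\mathcal{P}_{\mathrm{biaff}}(L)$ has a unique biaffine extension $\tilde q$ to the ambient space. We then set $\Theta^*(q)=(\tilde q\circ\Theta)|_K$. Since $\phi$ and $\psi$ are affine, $\tilde q(\phi(x),\psi(y))$ is affine in $x$ for fixed $y$ and affine in $y$ for fixed $x$, so it is a biaffine polynomial. Because $\Theta(K)\subseteq L$, the value on $K$ depends only on $q=\tilde q|_L$, so $\Theta^*(q)=q\circ\Theta$ is well defined. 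Linearity of $\Theta^*$ is clear.

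\emph{Morphism axioms.} Unitality holds because $1_L\circ\Theta=1_K$. Positivity holds because $q\ge0$ on $L$ and $\Theta(K)\subseteq L$ give $q\circ\Theta\ge0$ on $K$.

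\emph{Factorization.} This is the main point. Since $\phi(\pi_1(K))\subseteq\pi_1(L)$, we define
\[
T_1=\phi^*:\operatorname{Aff}(\pi_1(L))\to\operatorname{Aff}(\pi_1(K)),\qquad f\mapsto f\circ\phi,
\]
and define $T_2=\psi^*$ similarly. Each is well defined: an affine function on $\pi_1(L)$ extends uniquely to $\mathbb{R}^{n_2}$ by Remark~\ref{rem:non-deg}, and composing with the affine map $\phi$ keeps it affine. Both maps are unital and positive. Under the identification $\Theta:E_1\otimes E_2\cong\mathcal{P}_{\mathrm{biaff}}(K)$ from the proof of Proposition~\ref{prop:fnsp}, and the analogous one for $L$, we compute on elementary tensors:
\[
\Theta^*(f\otimes g)(x,y)=f(\phi(x))\,g(\psi(y))=(T_1f\otimes T_2g)(x,y).
\]
By linearity this gives $\Theta^*=T_1\otimes T_2$. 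The only care needed here is to keep the identifications consistent, i.e.\ to check that this multiplication map is the one used in Proposition~\ref{prop:fnsp}.

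\emph{Functoriality.} The identity $(\operatorname{id},\operatorname{id})$ pulls back to the identity map. For composition, $(\Theta_2\circ\Theta_1)^*q=q\circ\Theta_2\circ\Theta_1=\Theta_1^*\Theta_2^*q$. The factor maps compose compatibly as well, since $(\phi_2\circ\phi_1)^*=\phi_1^*\circ\phi_2^*$ and likewise for $\psi$. This is exactly contravariance, which is what $\mathbf{BiAffConv}^{\mathrm{op}}$ encodes, and completes the proof that $\mathcal{P}$ is a functor.
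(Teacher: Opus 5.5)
Your proof is correct and follows the paper's argument essentially step for step: unitality and positivity of the pullback come from $\Theta(K)\subseteq L$, the factorization $\Theta^*=\phi^*\otimes\psi^*$ is checked on elementary tensors $f\otimes g$, and functoriality follows from $(\Theta_2\circ\Theta_1)^*=\Theta_1^*\circ\Theta_2^*$. You also explicitly check that $q\circ\Theta$ is again (the restriction of) a biaffine polynomial and that $\phi^*$ and $\psi^*$ are positive, which fills in details the paper leaves implicit.
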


\begin{proof}
	Let $K \subseteq \mathbb{R}^{n_1+m_1}$ and $L \subseteq \mathbb{R}^{n_2+m_2}$. By Proposition~\ref{prop:fnsp}, 
	$\mathcal{P}_{\mathrm{biaff}}(K)$ and $\mathcal{P}_{\mathrm{biaff}}(L)$ are biaffine order unit spaces 
	over $(\mathrm{Aff}(\pi_1(K)), \mathrm{Aff}(\pi_2(K)))$ and $(\mathrm{Aff}(\pi_1(L)), \mathrm{Aff}(\pi_2(L)))$, respectively.
	
	Let $\Theta = (\phi, \psi) \in \mathrm{Mor}_{\mathbf{BiAffConv}}(K, L)$. For any $f \in \mathrm{Aff}(\pi_1(L))$ 
	and $g \in \mathrm{Aff}(\pi_2(L))$, evaluating on $(x,y) \in K$ yields
	\[
	(f \otimes g)(\Theta(x,y)) = f(\phi(x)) g(\psi(y)) = (\phi^*f)(x) (\psi^*g)(y) = (\phi^*f \otimes \psi^*g)(x,y).
	\]
	Hence, $\Theta^* = \phi^* \otimes \psi^*$. Because $\phi: \mathbb{R}^{n_1} \to \mathbb{R}^{n_2}$ is an affine map 
	satisfying $\phi(\pi_1(K)) \subseteq \pi_1(L)$, the pullback $\phi^*(f) = f \circ \phi$ maps $\mathrm{Aff}(\pi_1(L))$ 
	linearly into $\mathrm{Aff}(\pi_1(K))$, and satisfies $\phi^*(1_{\pi_1(L)}) = 1_{\pi_1(K)}$. Similarly, 
	$\psi^*: \mathrm{Aff}(\pi_2(L)) \to \mathrm{Aff}(\pi_2(K))$ is a unital linear map.
	
	Furthermore, $\Theta^*(1_L) = 1_L \circ \Theta = 1_K$. If $q \in \mathcal{P}_{\mathrm{biaff}}(L)^+$, then 
	$q(w) \ge 0$ for all $w \in L$. Since $\Theta(K) \subseteq L$, we have 
	$\Theta^*(q)(x,y) = q(\Theta(x,y)) \ge 0$ for all $(x,y) \in K$, so $\Theta^*(q) \in \mathcal{P}_{\mathrm{biaff}}(K)^+$. 
	Thus $\Theta^*$ is a unital, positive linear map factoring as $\phi^* \otimes \psi^*$, confirming it is a valid 
	morphism in $\mathbf{BiAOU}$.
	
	Functoriality follows directly from function pullbacks: $\operatorname{id}_K^* = \operatorname{id}_{\mathcal{P}(K)}$ 
	and $(\Theta_2 \circ \Theta_1)^* = \Theta_1^* \circ \Theta_2^*$.
\end{proof}

\begin{proposition}\label{prop:f2}
	Let $\mathcal{C}$ be a choice of designated affine coordinate bases 
	$\mathcal{B}_A = (\{e_i\}_{i=0}^{n_A}, \{d_j\}_{j=0}^{m_A})$ for each object 
	$A \in \mathbf{BiAOU}$. Then there is a functor
	\[
	\mathcal{K}_{\mathcal{C}} : \mathbf{BiAOU}^{\mathrm{op}} \longrightarrow \mathbf{BiAffConv}
	\]
	defined by:
	\begin{enumerate}
		\item \textit{On objects:} For $A \in \mathbf{BiAOU}$, $\mathcal{K}_{\mathcal{C}}(A) := K_{A, \mathcal{B}_A} \subseteq \mathbb{R}^{n_A+m_A}$.
		\item \textit{On morphisms:} For $T = T_1 \otimes T_2 \in \mathrm{Mor}_{\mathbf{BiAOU}}(B, A)$, the map 
		$\mathcal{K}_{\mathcal{C}}(T) : K_{A, \mathcal{B}_A} \to K_{B, \mathcal{B}_B}$ is given by
		\[
		\mathcal{K}_{\mathcal{C}}(T)(x,y) := (\phi_T(x), \psi_T(y)),
		\]
		where $\phi_T(x) := (T_1^*(\phi_x)(f_k))_{k=1}^{n_B}$ and $\psi_T(y) := (T_2^*(\psi_y)(g_l))_{l=1}^{m_B}$.
	\end{enumerate}
	Moreover, any two choices of basis assignments yield naturally isomorphic functors.
\end{proposition}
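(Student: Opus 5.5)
To prove Proposition~\ref{prop:f2}, I would check three things in turn: that $\mathcal{K}_{\mathcal{C}}(T)$ is a morphism of $\mathbf{BiAffConv}$, that the assignment is functorial, and that changing the basis assignment gives a naturally isomorphic functor.

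\textbf{$\mathcal{K}_{\mathcal{C}}(T)$ is a morphism.} Write $\mathcal{B}_B = (\{f_k\}, \{g_l\})$ with $f_0 = u_{F_1}$ and $g_0 = u_{F_2}$. For $(x,y) \in K_A$, the states $\phi_x \in S(E_1)$ and $\psi_y \in S(E_2)$ are determined by their coordinates, since $\iota_1$ is injective on $H_1$.

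First, $\phi_T$ is affine in $x$. Expand $T_1(f_k) = \sum_{i=0}^{n_A} \beta_{ki} e_i$. Then $T_1^*(\phi_x)(f_k) = \beta_{k0} + \sum_{i \ge 1} \beta_{ki} x_i$, because $\phi_x(e_0) = 1$. This formula defines $\phi_T$ on all of $\mathbb{R}^{n_A}$ as an affine map. The same argument applies to $\psi_T$.

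Second, $\mathcal{K}_{\mathcal{C}}(T)$ maps $K_A$ into $K_B$. For $\phi_x \otimes \psi_y \in S_{\mathrm{biaff}}(A)$, the pullback $T^*(\phi_x \otimes \psi_y) = T_1^*\phi_x \otimes T_2^*\psi_y$ is a state of $B$: it is positive because $T(B^+) \subseteq A^+$, and unital because $T(u_B) = u_A$. Unitality of $T_1$ and $T_2$ gives $T_1^*\phi_x \in H_1^B$ and $T_2^*\psi_y \in H_2^B$. Lemma~\ref{le:lambda}(b) applied to $B$ then puts this rank-one functional in $S_{\mathrm{biaff}}(B)$. Its $\iota_B$-coordinates are exactly $(\phi_T(x), \psi_T(y))$, so the image lies in $K_B$.

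\textbf{Functoriality.} I would use the intertwining identity $\iota_B \circ T^* = \mathcal{K}_{\mathcal{C}}(T) \circ \iota_A$ on $S_{\mathrm{biaff}}(A)$. Since $(S \circ T)^* = T^* \circ S^*$ and $\mathrm{id}^* = \mathrm{id}$, contravariance and preservation of identities follow. Injectivity of $\iota$ on product states then identifies the resulting affine maps on $K_A$, which is all that is needed because morphisms are maps on the sets.

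\textbf{Natural isomorphism.} Let $\mathcal{C}'$ be another basis assignment. Lemma~\ref{le:basis-change} supplies product-affine isomorphisms $\Theta_A \colon K_{A,\mathcal{B}_A} \to K_{A,\mathcal{B}'_A}$ satisfying $\Theta_A \circ \iota_{\mathcal{B}_A} = \iota_{\mathcal{B}'_A}$ on product states. Naturality, $\Theta_B \circ \mathcal{K}_{\mathcal{C}}(T) = \mathcal{K}_{\mathcal{C}'}(T) \circ \Theta_A$, follows by evaluating both sides on $\iota_{\mathcal{B}_A}(\omega)$ for $\omega \in S_{\mathrm{biaff}}(A)$: both equal $\iota_{\mathcal{B}'_B}(T^*\omega)$ by the intertwining identity. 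Each $\Theta_A$ is invertible, with inverse of the same product-affine form, so the $\Theta_A$ form a natural isomorphism.

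The main care point is the membership $T^*(\omega) \in S_{\mathrm{biaff}}(B)$. It requires that $T_1^*\phi_x$ and $T_2^*\psi_y$ are genuinely states of $F_1$ and $F_2$, not merely normalized functionals. Lemma~\ref{le:lambda}(b) handles this, or alternatively the positivity of $T_1$ and $T_2$ built into the definition of morphisms in $\mathbf{BiAOU}$.
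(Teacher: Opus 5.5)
Your proposal is correct and follows essentially the same route as the paper. The steps match: you show $\phi_T$ is affine by expanding $T_1(f_k)$ in the basis $\{e_i\}$, show $T^*(\phi_x\otimes\psi_y)\in S_{\mathrm{biaff}}(B)$, get functoriality from $(T\circ S)^*=S^*\circ T^*$, and get naturality from the isomorphisms $\Theta_A$ of Lemma~\ref{le:basis-change}, with both sides equal to $\iota_{\mathcal{B}'_B}(T^*\omega)$. The only small difference is that you show the factors $T_1^*\phi_x$ and $T_2^*\psi_y$ are states via Lemma~\ref{le:lambda}(b), using only unitality and positivity of $T$, whereas the paper uses the positivity of $T_1$ and $T_2$ directly; both work, and the object assignment rests on Proposition~\ref{prop:sets} in either case.
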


\begin{proof}
	Let $A$ and $B$ be biaffine order unit spaces of types $(n_A, m_A)$ and $(n_B, m_B)$ over factor pairs $(E_1, E_2)$ and $(F_1, F_2)$, with designated bases $\mathcal{B}_A = (\{e_i\}_{i=0}^{n_A}, \{d_j\}_{j=0}^{m_A})$ and $\mathcal{B}_B = (\{f_k\}_{k=0}^{n_B}, \{g_l\}_{l=0}^{m_B})$ selected by $\mathcal{C}$. For convenience, write $K_A := K_{A, \mathcal{B}_A}$ and $K_B := K_{B, \mathcal{B}_B}$. By Proposition \ref{prop:sets}, $K_A$ and $K_B$ are compact non-degenerate biaffinely convex sets.   
	
	Let $T = T_1 \otimes T_2 \in \operatorname{Mor}_{\mathbf{BiAOU}}(B, A)$ and $\mathcal{K}_{\mathcal{C}}(T)(x, y) = (\phi_T(x), \psi_T(y))$. The coordinates of $\phi_T(x)$ are given by
	$$(\phi_T(x))_k = T_1^*(\phi_x)(f_k) = \phi_x(T_1(f_k))$$   For each $k \in \{1, \dots, n_B\}$, expanding $T_1(f_k) \in E_1$ with respect to the basis $\{e_i\}_{i=0}^{n_A}$ yields$$T_1(f_k) = c_{k0} u_{E_1} + \sum_{i=1}^{n_A} c_{ki} e_i$$   
	for coefficients $c_{k0}, c_{ki} \in \mathbb{R}$. Evaluating $\phi_x$ on $T_1(f_k)$ gives
	$$(\phi_T(x))_k = \phi_x(T_1(f_k)) = c_{k0} \phi_x(u_{E_1}) + \sum_{i=1}^{n_A} c_{ki} \phi_x(e_i) = c_{k0} + \sum_{i=1}^{n_A} c_{ki} x_i$$   
	Each component $(\phi_T(x))_k$ is an affine polynomial in $x = (x_1, \dots, x_{n_A})$, so $\phi_T: \mathbb{R}^{n_A} \to \mathbb{R}^{n_B}$ is an affine map. By the same reasoning, $\psi_T: \mathbb{R}^{m_A} \to \mathbb{R}^{m_B}$ is an affine map.   
	
	Next, we verify that $(\phi_T, \psi_T)(K_A) \subseteq K_B$. Let $(x, y) \in K_A$. By Definition \ref{def:statesp}, $\phi_x \otimes \psi_y \in S_{\mathrm{biaff}}(A) = S(A) \cap (S(E_1) \odot S(E_2))$. Under the adjoint $T^* = T_1^* \otimes T_2^*$, we have
	$$T^*(\phi_x \otimes \psi_y) = T_1^*(\phi_x) \otimes T_2^*(\psi_y)$$   Because $T_1: F_1 \to E_1$ is unital and positive,$$T_1^*(\phi_x)(u_{F_1}) = \phi_x(T_1(u_{F_1})) = \phi_x(u_{E_1}) = 1$$   
	and $T_1^*(\phi_x)(F_1^+) = \phi_x(T_1(F_1^+)) \subseteq \phi_x(E_1^+) \subseteq [0, \infty)$ since $\phi_x \in S(E_1)$. Thus $T_1^*(\phi_x) \in S(F_1)$, and similarly $T_2^*(\psi_y) \in S(F_2)$. Furthermore, since $T(B^+) \subseteq A^+$, for every $b \in B^+$ we have
	$$(T_1^*(\phi_x) \otimes T_2^*(\psi_y))(b) = (\phi_x \otimes \psi_y)(T(b)) \ge 0$$   
	Hence $T^*(\phi_x \otimes \psi_y) \in S(B) \cap (S(F_1) \odot S(F_2)) = S_{\mathrm{biaff}}(B)$. Applying the coordinate evaluation map $\iota_{\mathcal{B}_B}$ from Definition \ref{def:statesp} yields
	$$\iota_{\mathcal{B}_B}(T^*(\phi_x \otimes \psi_y)) = \left( (T_1^*(\phi_x)(f_k))_{k=1}^{n_B}, (T_2^*(\psi_y)(g_l))_{l=1}^{m_B} \right) = (\phi_T(x), \psi_T(y))$$   
	Since $T^*(\phi_x \otimes \psi_y) \in S_{\mathrm{biaff}}(B)$, its image under $\iota_{\mathcal{B}_B}$ lies in $K_B$. Thus $\mathcal{K}_{\mathcal{C}}(T)(K_A) \subseteq K_B$, confirming that $\mathcal{K}_{\mathcal{C}}(T)$ is a valid morphism in $\mathbf{BiAffConv}$.   
	
	Functoriality follows from the properties of linear adjoints: for $S \in \operatorname{Mor}_{\mathbf{BiAOU}}(C, B)$ and $T \in \operatorname{Mor}_{\mathbf{BiAOU}}(B, A)$, $(T \circ S)^* = S^* \circ T^*$. In coordinates, this yields $\phi_{T \circ S} = \phi_S \circ \phi_T$ and $\psi_{T \circ S} = \psi_S \circ \psi_T$, so
	$$\mathcal{K}_{\mathcal{C}}(T \circ S) = (\phi_S \circ \phi_T, \psi_S \circ \psi_T) = \mathcal{K}_{\mathcal{C}}(S) \circ \mathcal{K}_{\mathcal{C}}(T)$$   
	and $\mathcal{K}_{\mathcal{C}}(\operatorname{id}_A) = \operatorname{id}_{K_A}$.   
	
	Finally, let $\mathcal{C}$ and $\mathcal{C}'$ be two choices of designated affine coordinate bases, assigning bases $\mathcal{B}_A$ and $\mathcal{B}'_A$ to each $A \in \mathbf{BiAOU}$. By Lemma \ref{le:basis-change}, there is an isomorphism $\Theta_A = (\phi_A, \psi_A) : K_{A, \mathcal{B}_A} \to K_{A, \mathcal{B}'_A}$ in $\mathbf{BiAffConv}$ satisfying
	$$\Theta_A(\iota_{\mathcal{B}_A}(\omega)) = \iota_{\mathcal{B}'_A}(\omega)$$   
	for every rank-1 state $\omega \in S_{\mathrm{biaff}}(A)$. To show that the family $\{\Theta_A\}_{A \in \operatorname{Ob}(\mathbf{BiAOU})}$ defines a natural isomorphism $\Theta: \mathcal{K}_{\mathcal{C}} \Longrightarrow \mathcal{K}_{\mathcal{C}'}$, let $T \in \operatorname{Mor}_{\mathbf{BiAOU}}(B, A)$ and $(x, y) \in K_{A, \mathcal{B}_A}$. Setting $\omega = \phi_x \otimes \psi_y = \iota_{\mathcal{B}_A}^{-1}(x, y) \in S_{\mathrm{biaff}}(A)$, we compute
	$$(\Theta_B \circ \mathcal{K}_{\mathcal{C}}(T))(x, y) = \Theta_B(\iota_{\mathcal{B}_B}(T^*(\omega))) = \iota_{\mathcal{B}'_B}(T^*(\omega))$$   Conversely, since $\Theta_A(x, y) = \iota_{\mathcal{B}'_A}(\omega)$, the definition of $\mathcal{K}_{\mathcal{C}'}(T)$ gives$$(\mathcal{K}_{\mathcal{C}'}(T) \circ \Theta_A)(x, y) = \mathcal{K}_{\mathcal{C}'}(T)(\iota_{\mathcal{B}'_A}(\omega)) = \iota_{\mathcal{B}'_B}(T^*(\omega))$$   
	Thus $\Theta_B \circ \mathcal{K}_{\mathcal{C}}(T) = \mathcal{K}_{\mathcal{C}'}(T) \circ \Theta_A$, proving naturality.
\end{proof}

\begin{remark}
	The intrinsic geometric object associated with $A$ is the biaffine state space $S_{\mathrm{biaff}}(A)$, which is entirely coordinate-free. The embedding into Euclidean space $\mathbb{R}^{n+m}$ depends on selecting coordinate bases for $E_1$ and $E_2$. While $\mathcal{K}_{\mathcal{C}}$ serves as an explicit quasi-inverse once bases are selected, the categorical equivalence in the forthcoming Theorem \ref{thm:categorical_duality} does not rely on a canonical choice of $\mathcal{K}$, as $\mathcal{P}$ is shown directly to be fully faithful and essentially surjective.
\end{remark}

\subsection{Categorical Equivalence}

To prove that $\mathcal{P}$ is an equivalence of categories, we show that it is fully faithful.

\begin{proposition}\label{prop:faithful} 
	Let $K, L \in \mathbf{BiAffConv}$. The mapping 
	\[
	\operatorname{Mor}_{\mathbf{BiAffConv}}(K, L) \longrightarrow \operatorname{Mor}_{\mathbf{BiAOU}}(\mathcal{P}(L), \mathcal{P}(K)), \quad \Theta \longmapsto \Theta^*
	\]
	is injective.
\end{proposition}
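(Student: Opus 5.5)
The plan is to take two morphisms $\Theta = (\phi,\psi)$ and $\Theta' = (\phi',\psi')$ in $\operatorname{Mor}_{\mathbf{BiAffConv}}(K,L)$ with $\Theta^* = \Theta'^*$ and show that they agree as maps $K \to L$. A morphism is a map $K \to L$, so it is enough to show $\Theta(x,y) = \Theta'(x,y)$ for all $(x,y)\in K$.

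The coordinate functions separate points. Let $L \subseteq \mathbb{R}^{n_2+m_2}$. For each $1 \le k \le n_2$, the coordinate function $q_k(x',y') := x'_k$ is a biaffine polynomial, so its restriction lies in $\mathcal{P}_{\mathrm{biaff}}(L)$. The same holds for $r_l(x',y') := y'_l$ with $1\le l\le m_2$. Evaluating $\Theta^*(q_k) = \Theta'^*(q_k)$ at a point $(x,y)\in K$ gives
\[
\phi(x)_k = q_k(\Theta(x,y)) = q_k(\Theta'(x,y)) = \phi'(x)_k ,
\]
and likewise $\psi(y)_l = \psi'(y)_l$. Hence $\Theta(x,y) = \Theta'(x,y)$ for every $(x,y) \in K$, so $\Theta = \Theta'$ as maps $K\to L$.

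If morphisms are meant as the pairs $(\phi,\psi)$ of affine maps on the whole ambient spaces, one further step is needed. From the above, $\phi = \phi'$ on $\pi_1(K)$ and $\psi=\psi'$ on $\pi_2(K)$. By Remark~\ref{rem:non-deg}, non-degeneracy of $K$ means $\pi_1(K)$ affinely spans $\mathbb{R}^{n_1}$ and $\pi_2(K)$ affinely spans $\mathbb{R}^{m_1}$. Two affine maps that agree on an affinely spanning set agree everywhere, so $\phi=\phi'$ and $\psi=\psi'$ globally.

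There is no real obstacle here. The only subtlety is this identification of morphisms with the underlying affine pairs, which is exactly where non-degeneracy of $K$ is used.
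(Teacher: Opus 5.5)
Your proposal is correct and follows the paper's proof essentially step for step. You pull back the coordinate functions $x'_k$ and $y'_l$ (the paper writes them as $x'_k \otimes 1_{\pi_2(L)}$ and $1_{\pi_1(L)} \otimes y'_l$) to get $\phi = \phi'$ on $\pi_1(K)$ and $\psi = \psi'$ on $\pi_2(K)$, then use the affine spanning from Remark~\ref{rem:non-deg} to conclude that the affine maps agree on the whole ambient spaces.
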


\begin{proof}
	Let $\Theta_1 = (\phi_1, \psi_1)$ and $\Theta_2 = (\phi_2, \psi_2)$ be morphisms in $\operatorname{Mor}_{\mathbf{BiAffConv}}(K, L)$ such that $\Theta_1^* = \Theta_2^*$. 
	
	Let $x'_k \in \text{Aff}(\pi_1(L))$ denote the coordinate projection $x' \mapsto x'_k$ for $1 \le k \le n_2$. Then $x'_k \otimes 1_{\pi_2(L)} \in \mathcal{P}_{\mathrm{biaff}}(L)$. For every $(x, y) \in K$, we have
	\[
	\Theta_1^*(x'_k \otimes 1_{\pi_2(L)})(x, y) = (x'_k \otimes 1_{\pi_2(L)})(\phi_1(x), \psi_1(y)) = (\phi_1(x))_k.
	\]
	Since $\Theta_1^* = \Theta_2^*$, it follows that
	\[
	(\phi_1(x))_k = \Theta_2^*(x'_k \otimes 1_{\pi_2(L)})(x, y) = (\phi_2(x))_k \quad \text{for all } (x, y) \in K \text{ and } 1 \le k \le n_2.
	\]
	Consequently, $\phi_1(x) = \phi_2(x)$ for all $x \in \pi_1(K)$. 
	
	Because $K$ is non-degenerate of type $(n_1, m_1)$, Remark~\ref{rem:non-deg} ensures that $\pi_1(K)$ affinely spans $\mathbb{R}^{n_1}$. Since $\phi_1, \phi_2: \mathbb{R}^{n_1} \to \mathbb{R}^{n_2}$ are affine maps agreeing on an affine spanning set of $\mathbb{R}^{n_1}$, they must be identical: $\phi_1 = \phi_2$ on all of $\mathbb{R}^{n_1}$. 
	
	An identical argument applied to the coordinate projections $1_{\pi_1(L)} \otimes y'_l \in \mathcal{P}_{\mathrm{biaff}}(L)$ demonstrates that $\psi_1(y) = \psi_2(y)$ for all $y \in \pi_2(K)$, and by non-degeneracy of $\pi_2(K)$, $\psi_1 = \psi_2$ on all of $\mathbb{R}^{m_1}$. Hence $\Theta_1 = \Theta_2$, proving that $\mathcal{P}$ is faithful.
\end{proof}

\begin{proposition}\label{prop:full} 
	Let $K, L \in \mathbf{BiAffConv}$. The mapping 
	\[
	\operatorname{Mor}_{\mathbf{BiAffConv}}(K, L) \longrightarrow \operatorname{Mor}_{\mathbf{BiAOU}}(\mathcal{P}(L), \mathcal{P}(K)), \quad \Theta \longmapsto \Theta^*
	\]
	is surjective.
\end{proposition}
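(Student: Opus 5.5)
Let $T = T_1 \otimes T_2 \in \operatorname{Mor}_{\mathbf{BiAOU}}(\mathcal{P}(L), \mathcal{P}(K))$, where $T_1 \colon \operatorname{Aff}(\pi_1(L)) \to \operatorname{Aff}(\pi_1(K))$ and $T_2 \colon \operatorname{Aff}(\pi_2(L)) \to \operatorname{Aff}(\pi_2(K))$ are unital positive linear maps. The plan is to read off the affine maps $\phi,\psi$ from the images of the coordinate functions. Set
\[
\phi(x) := \bigl(T_1(x'_k)(x)\bigr)_{k=1}^{n_2}, \qquad \psi(y) := \bigl(T_2(y'_l)(y)\bigr)_{l=1}^{m_2},
\]
where $x'_k$ and $y'_l$ denote the coordinate functions on $\pi_1(L)$ and $\pi_2(L)$. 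Each $T_1(x'_k)$ lies in $\operatorname{Aff}(\pi_1(K))$. Since $\pi_1(K)$ affinely spans $\mathbb{R}^{n_1}$ (Remark~\ref{rem:non-deg}), it is the restriction of a unique affine function on $\mathbb{R}^{n_1}$. Hence $\phi$ extends uniquely to an affine map $\mathbb{R}^{n_1} \to \mathbb{R}^{n_2}$, and likewise $\psi$. Set $\Theta := (\phi,\psi)$.

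Next I check that $\Theta^* = T$. Because $T_1$ is unital, $T_1(1_{\pi_1(L)}) = 1_{\pi_1(K)}$. By linearity, for every $f = c_0 + \sum_k c_k x'_k \in \operatorname{Aff}(\pi_1(L))$ we get $T_1(f)(x) = c_0 + \sum_k c_k \phi(x)_k = f(\phi(x))$; here $f$ is regarded as its unique affine extension to $\mathbb{R}^{n_2}$. The same holds for $T_2$ and $\psi$. Therefore, for elementary tensors,
\[
T(f\otimes g)(x,y) = T_1(f)(x)\,T_2(g)(y) = f(\phi(x))\,g(\psi(y)) = \Theta^*(f \otimes g)(x,y).
\]
By linearity and the identification $\mathcal{P}_{\mathrm{biaff}}(L) \cong \operatorname{Aff}(\pi_1(L)) \otimes \operatorname{Aff}(\pi_2(L))$ from Proposition~\ref{prop:fnsp}, this gives $T = \Theta^*$ as maps on biaffine polynomials. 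These polynomials are evaluated at $\Theta(x,y)$, which may a priori lie outside $L$; this is harmless because each $q$ is given by a global polynomial.

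The main step is to show that $\Theta(K) \subseteq L$; here biaffine convexity of $L$ enters. Fix $(x,y) \in K$ and consider the functional $\omega := \operatorname{ev}_{(x,y)} \circ T$ on $\mathcal{P}_{\mathrm{biaff}}(L)$. Since $T$ is unital and positive, $\omega$ is a state of $\mathcal{P}_{\mathrm{biaff}}(L)$. By Lemma~\ref{le:cone}, $S(\mathcal{P}_{\mathrm{biaff}}(L)) = \operatorname{conv}(\mathcal{E}(L))$. Applying the coordinate map $\Lambda$ of Lemma~\ref{le:lambda} with the canonical bases gives $\Lambda(\omega) \in \operatorname{conv}(\Phi(L))$. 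On the other hand, $\omega(q) = q(\Theta(x,y))$, so $\omega(x'_k y'_l) = \phi(x)_k\,\psi(y)_l$ and similarly for the other basis monomials. Hence $\Lambda(\omega) = \Phi(\Theta(x,y)) \in \Sigma_{n_2,m_2}$. Thus $\Phi(\Theta(x,y)) \in \operatorname{conv}(\Phi(L)) \cap \Sigma_{n_2,m_2} = \Phi(L)$ by Lemma~\ref{le:phieq}, and injectivity of $\Phi$ yields $\Theta(x,y) \in L$.

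Consequently $\Theta \in \operatorname{Mor}_{\mathbf{BiAffConv}}(K,L)$ with $\Theta^* = T$, which proves surjectivity. I expect the containment $\Theta(K) \subseteq L$ to be the only genuinely nontrivial point: without biaffine convexity of $L$, $\Theta$ would only map $K$ into $\operatorname{biconv}(L)$.
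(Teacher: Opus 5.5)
Your proof is correct and follows the paper's argument. You use the same definition of $\phi,\psi$ via $T_1(x'_k)$ and $T_2(y'_l)$, the same coefficient expansion showing $\Theta^*=T$, and the same mechanism for $\Theta(K)\subseteq L$: the pulled-back evaluation is a state of $\mathcal{P}(L)$ whose $\Lambda$-image lies in $\operatorname{conv}(\Phi(L))\cap\Sigma_{n_2,m_2}=\Phi(L)$. The differences are only organizational: you prove $\Theta^*=T$ first (correctly relying on unique global extensions of biaffine polynomials on the non-degenerate $L$) and inline the Segre-variety computation, whereas the paper routes the containment through Proposition~\ref{prop:f2} and the identity $K_{\mathcal{P}(L)}=L$ of Proposition~\ref{prop:setiso}.
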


\begin{proof}
	Let $T \in \operatorname{Mor}_{\mathbf{BiAOU}}(\mathcal{P}(L), \mathcal{P}(K))$. By definition of morphisms in $\mathbf{BiAOU}$, $T$ factors as
	\[
	T = T_1 \otimes T_2,
	\]
	where $T_1: \text{Aff}(\pi_1(L)) \to \text{Aff}(\pi_1(K))$ and $T_2: \text{Aff}(\pi_2(L)) \to \text{Aff}(\pi_2(K))$ are unital positive linear maps.
	
	We construct a geometric morphism $\Theta = (\phi, \psi): K \to L$. Let $\{x'_k\}_{k=1}^{n_2}$ and $\{y'_l\}_{l=1}^{m_2}$ denote the canonical coordinate functions on $\mathbb{R}^{n_2}$ and $\mathbb{R}^{m_2}$ restricted to $\pi_1(L)$ and $\pi_2(L)$. For each $k$, $T_1(x'_k) \in \text{Aff}(\pi_1(K))$. Because $K$ is non-degenerate of type $(n_1, m_1)$, Remark~\ref{rem:non-deg} ensures that $\pi_1(K)$ affinely spans $\mathbb{R}^{n_1}$; hence $T_1(x'_k)$ extends uniquely to an affine function on the entire ambient space $\mathbb{R}^{n_1}$. We define $\phi: \mathbb{R}^{n_1} \to \mathbb{R}^{n_2}$ by
	\[
	\phi(x) := ((T_1(x'_1))(x), \dots, (T_1(x'_{n_2}))(x)).
	\]
	Since each component is an affine function on $\mathbb{R}^{n_1}$, $\phi$ is an affine map. By identical reasoning, defining $\psi: \mathbb{R}^{m_1} \to \mathbb{R}^{m_2}$ by
	\[
	\psi(y) := ((T_2(y'_1))(y), \dots, (T_2(y'_{m_2}))(y))
	\]
	yields an affine map. Set $\Theta(x, y) := (\phi(x), \psi(y))$.
	
	We claim that $\Theta(K) \subseteq L$. Let $(x, y) \in K$. Now $ev_{(x, y)} = ev_x \otimes ev_y \in S_{\mathrm{biaff}}(\mathcal{P}(K))$ and
	\[
	T^*(ev_{(x, y)}) = T_1^*(ev_x) \otimes T_2^*(ev_y).
	\]
	Because $T$ is a morphism in $\mathbf{BiAOU}$, Proposition~\ref{prop:f2} confirms that $T^*(ev_{(x, y)}) \in S_{\mathrm{biaff}}(\mathcal{P}(L))$. Evaluating the coordinate evaluation map $\iota_L$ on this state yields
	\[
	\iota_L(T^*(ev_{(x, y)})) = \left( (T_1^*(ev_x)(x'_k))_{k=1}^{n_2}, (T_2^*(ev_y)(y'_l))_{l=1}^{m_2} \right) = (\phi(x), \psi(y)) = \Theta(x, y).
	\]
	Consequently, $\Theta(x, y) \in \iota_L(S_{\mathrm{biaff}}(\mathcal{P}(L))) = K_{\mathcal{P}(L)}$. Because $L$ is biaffinely convex, Proposition~\ref{prop:setiso} ensures that $K_{\mathcal{P}(L)} = L$. Therefore, $\Theta(x, y) \in L$ for all $(x, y) \in K$, proving that $\Theta(K) \subseteq L$.
	
	Finally, we verify that $\Theta^* = T$. Any affine function $f \in \text{Aff}(\pi_1(L))$ expands uniquely as $f = c_0 1_{\pi_1(L)} + \sum_{k=1}^{n_2} c_k x'_k$. Since $T_1$ is linear and unital ($T_1(1_{\pi_1(L)}) = 1_{\pi_1(K)}$), evaluating $f$ at $\phi(x)$ gives
	\[
	f(\phi(x)) = c_0 + \sum_{k=1}^{n_2} c_k (\phi(x))_k = c_0 (T_1(1_{\pi_1(L)}))(x) + \sum_{k=1}^{n_2} c_k (T_1(x'_k))(x) = (T_1(f))(x).
	\]
	Similarly, $g(\psi(y)) = (T_2(g))(y)$ for every $g \in  \text{Aff}(\pi_2(L))$. Therefore, for any elementary tensor $f \otimes g \in  \text{Aff}(\pi_1(L)) \otimes  \text{Aff}(\pi_2(L))$ and any $(x, y) \in K$,
	\[
	\Theta^*(f \otimes g)(x, y) = f(\phi(x)) g(\psi(y)) = (T_1(f))(x) (T_2(g))(y) = (T(f \otimes g))(x, y).
	\]
	Hence $\Theta^*(f \otimes g) = T(f \otimes g)$ as functions on $K$. Because elementary tensors span $\mathcal{P}_{\mathrm{biaff}}(L)$, linearity ensures that $\Theta^* = T$. This completes the proof that $\mathcal{P}$ is full.
\end{proof}

\begin{theorem}
	\label{thm:categorical_duality}
	The functor 
	\[
	\mathcal{P}: \mathbf{BiAffConv}^{\mathrm{op}} \longrightarrow \mathbf{BiAOU}
	\]
	is an equivalence of categories. That is, $\mathbf{BiAffConv}$ is dually equivalent to $\mathbf{BiAOU}$.
\end{theorem}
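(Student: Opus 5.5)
The plan is to use the standard criterion that a functor is an equivalence of categories if and only if it is fully faithful and essentially surjective. Full faithfulness is already in hand. Proposition~\ref{prop:faithful} shows that $\Theta \mapsto \Theta^*$ is injective on each hom-set $\operatorname{Mor}_{\mathbf{BiAffConv}}(K,L)$. Proposition~\ref{prop:full} shows it is surjective onto $\operatorname{Mor}_{\mathbf{BiAOU}}(\mathcal{P}(L),\mathcal{P}(K))$. Combined with the functoriality of $\mathcal{P}$, the map on morphisms is therefore a bijection for every pair of objects. What remains is to show that every object of $\mathbf{BiAOU}$ is isomorphic to some $\mathcal{P}(K)$.

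For essential surjectivity, let $A=(A,E_1,E_2,u,A^+)$ be an object of $\mathbf{BiAOU}$ of type $(n,m)$.
\begin{enumerate}
\item Fix designated affine bases $\{e_i\}_{i=0}^n$ of $E_1$ and $\{d_j\}_{j=0}^m$ of $E_2$ with $e_0=u_1$ and $d_0=u_2$.
\item By Proposition~\ref{prop:sets}, $K_A$ is a compact non-degenerate biaffinely convex set, hence an object of $\mathbf{BiAffConv}$. Here one should check that $n,m\ge 1$ matches the object conventions.
\item By Proposition~\ref{prop:fniso}, the map $\Psi=\mu\circ(\Psi_1\otimes\Psi_2)\colon A\to\mathcal{P}_{\mathrm{biaff}}(K_A)=\mathcal{P}(K_A)$ is an isomorphism of biaffine order unit spaces in the sense of Definition~\ref{def:biaff_iso}.
\end{enumerate}

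The main obstacle is confirming that an isomorphism in the sense of Definition~\ref{def:biaff_iso} is an isomorphism in the category $\mathbf{BiAOU}$, which means producing an inverse morphism.
\begin{itemize}
\item $\Psi$ is itself a morphism: it is unital and positive, and it factors as a tensor product of unital positive maps, once $\mu$ is identified with the identification $\operatorname{Aff}(\pi_1(K_A))\otimes\operatorname{Aff}(\pi_2(K_A))=\mathcal{P}(K_A)$ built into the object $\mathcal{P}(K_A)$.
\item The inverse is $\Psi^{-1}=\Psi_1^{-1}\otimes\Psi_2^{-1}$, transported through $\mu^{-1}$. It is unital because $\Psi(u)=1_{K_A}$.
\item $\Psi^{-1}$ is positive because $\Psi(A^+)=\mathcal{P}(K_A)^+$.
\item The factor maps $\Psi_i^{-1}$ are unital and positive because each $\Psi_i$ is a unital order isomorphism.
\item Hence $\Psi^{-1}$ is a morphism in $\mathbf{BiAOU}$, and $\Psi^{-1}\circ\Psi$ and $\Psi\circ\Psi^{-1}$ are the respective identities $\operatorname{id}_{E_1}\otimes\operatorname{id}_{E_2}$.
\end{itemize}
Therefore $A\cong\mathcal{P}(K_A)$ in $\mathbf{BiAOU}$, so $\mathcal{P}$ is essentially surjective. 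This completes the equivalence.

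If one wants an explicit quasi-inverse instead, take $\mathcal{K}_{\mathcal{C}}$ from Proposition~\ref{prop:f2} and verify the two natural isomorphisms. The first is $\mathrm{id}\Rightarrow\mathcal{K}_{\mathcal{C}}\circ\mathcal{P}$, coming from Proposition~\ref{prop:setiso} together with Lemma~\ref{le:basis-change}. The second is $\mathcal{P}\circ\mathcal{K}_{\mathcal{C}}\Rightarrow\mathrm{id}$, given by the maps $\Psi$. Naturality of the $\Psi$'s would be checked on product states via the formula $\Psi(a)(x,y)=(\phi_x\otimes\psi_y)(a)$ and $T^*(\phi_x\otimes\psi_y)=\iota^{-1}_{\mathcal{B}_B}(\mathcal{K}_{\mathcal{C}}(T)(x,y))$. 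This route is not needed once full faithfulness and essential surjectivity are established.
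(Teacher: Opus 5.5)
Your proposal is correct and follows the paper's proof: full faithfulness comes from Propositions~\ref{prop:faithful} and~\ref{prop:full}, and essential surjectivity comes from $K_A$ via Proposition~\ref{prop:sets} together with the isomorphism of Proposition~\ref{prop:fniso}. Your check that an isomorphism in the sense of Definition~\ref{def:biaff_iso} has an inverse morphism in $\mathbf{BiAOU}$ fills in a step the paper leaves implicit. On the $n,m\ge 1$ point you flag, note that Definition~\ref{def:bimodule} does not exclude $\dim E_i=1$, so the conventions must be aligned, for example by requiring $n,m\ge 1$ for objects of $\mathbf{BiAOU}$ as well; the paper's proof tacitly assumes this.
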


\begin{proof}
	A functor is an equivalence if and only if it is faithful, full, and essentially surjective.
	Faithfulness and fullness of $\mathcal{P}$ were established in Proposition~\ref{prop:faithful} and Proposition~\ref{prop:full}, respectively.
	
	To establish essential surjectivity, let $A$ be an arbitrary object in $\mathbf{BiAOU}$ of type $(n,m)$. By selecting any affine coordinate bases for its factor spaces $E_1$ and $E_2$, Proposition \ref{prop:sets} guarantees that the coordinate biaffine state space $K_A \subseteq \mathbb{R}^{n+m}$ is an object in $\mathbf{BiAffConv}$. Furthermore, Proposition \ref{prop:fniso} provides an isomorphism $A \cong \mathcal{P}_{\mathrm{biaff}}(K_A) = \mathcal{P}(K_A)$ in $\mathbf{BiAOU}$. Thus, every object of $\mathbf{BiAOU}$ is isomorphic to the image under $\mathcal{P}$ of an object in $\mathbf{BiAffConv}$, which completes the proof.
\end{proof}


\end{document}